\newtheorem{theorem}{Theorem}[section]
\newtheorem{lemma}[theorem]{Lemma}
\newtheorem{corollary}[theorem]{Corollary}
\newtheorem{proposition}[theorem]{Proposition}
\theoremstyle{remark}
\theoremstyle{definition}
\newtheorem{definition}[theorem]{Definition}
\numberwithin{equation}{section}
\newcommand{\I}{{\mathds {1}}}
\DeclareMathOperator{\B}{{\mathcal B}}
\newcommand{\A}{A}
\newcommand{\M}{M}
\newcommand{\cM}{\mathfrak{M}}
\newcommand{\D}{D}
\newcommand{\clo}[1]{\langle{#1}\rangle}
\begin{document}

\title{On a class of subdiagonal algebras} 
\author{David P. Blecher}
\address{Department of Mathematics, University of Houston, Houston, TX
77204-3008, USA}
\email[David P. Blecher]{dpbleche@central.uh.edu}

\author{Louis E. Labuschagne}
\address{Internal Box 209, School of Comp., Stat. \& Math. Sci., NWU, Pvt. Bag X6001, 2520 Potchefstroom, South
Africa}
\email{louis.labuschagne@nwu.ac.za}

\date{today}  
\thanks{We are grateful to  the Simons Foundation
for financial support. } 

\begin{abstract}  We investigate some new classes  of  operator algebras which we call  semi-$\sigma$-finite subdiagonal and 
Riesz approximable.  
These  constitute the most general setting to date for  a noncommutative Hardy space theory based on Arveson's subdiagonal algebras. 
We  develop this theory and study the properties of these new classes.    
  \end{abstract}

\maketitle

\section{Introduction}

From its inception in the remarkable thesis and 1967 paper of Arveson \cite{Arv}, the theory of subdiagonal algebras has now developed into a very refined theory of quantum $H^p$ spaces which has attracted very many international researchers.   Primary  structural facts regarding classical $H^p$ spaces and their abstract function theoretic generalizations from the 1960's,
now have beautiful quantum analogues in this setting.  We mention for example
the  F $\&$ M Riesz theorem, Beurling  invariant subspace theorem, Jensen and Szeg\"o theorems, Gleason-Whitney theorem, inner-outer factorization, and so on. 
The quantum theory is moreover no mere verbatim clone of the classical theory. Rather one often finds  in the quantum world that a theory which is fairly simple to formulate in the classical setting,  unfolds into a very intricate kaleidoscope in the quantum world.  

In a series of papers the authors extended 
the theory of generalized
$H^p$ spaces for function algebras from the 1960s
to the setting of Arveson's {\em finite maximal subdiagonal
algebras} of a von Neumann algebra $M$ possessing a faithful
normal tracial state.   As stated in \cite{BLsurvey}, ``as an example of what some might call `mathematical quantization', or noncommutative (operator algebraic) generalization of a classical theory, the program succeeds to a degree of `faithfulness to the original' which seems to be quite rare''.  Much of this theory is summarized in the the latter survey
paper.   
After this,  many researchers turned their attention to generalizing parts of the theory to subalgebras of semi- and $\sigma$-finite von Neumann algebras. Important structural results were obtained by Ji, Ohwada, Saito, Bekjan, Xu, the authors, and others, 
although other results
start to break down in successively more general settings.  In the present paper we develop the theory in the 
most general setting to date.
We propose  new classes  of algebras, that we call semi-$\sigma$-finite subdiagonal  and
Riesz  approximable, that contain both the semi- and $\sigma$-finite case.  We then work out  
some of the generalized Hardy space theory for these algebras. 
The essence of what we present is a theory of subdiagonal subalgebras conditioned to von Neumann algebras equipped with a strictly semifinite faithful normal weight.  
In fact an extension of the theory to general von Neumann algebras is possible, as will be shown in the forthcoming paper \cite{LXdraft}. However we believe that the present setting may well be the most general setting in which some of the more delicate properties of noncommutative $H^p$-spaces - like the F \& M Riesz theorem and the Gleason-Whitney theorem - hold true. As such this context deserves to be studied in its own right.

We will see that if $A$ is a semi-$\sigma$-finite subdiagonal algebra in $M$ then $A$ is an increasing limit of 
 maximal $\sigma$-finite subdiagonal algebras $A_i$.   This allows  a very convenient simplification of Haagerup's reduction method, and its application to
 subdiagonal algebras.  The Hardy space 
$H^p(A)$ is simply the completion of the 
increasing union of the Hardy spaces $H^p(A_i)$.   The theory of maximal subdiagonal algebras in the $\sigma$-{\em finite case} has been 
developed by many authors: Ji, Saito, Labuschagne, 
Bekjan and others (see e.g.\ \cite{JiSaito,JOS,Xu,Ji,JiAnalytic,LL,blueda,BekR}).   Thus most of the Hardy space theory of maximal 
semi-$\sigma$-finite subdiagonal algebras may be viewed as a `limiting case' of the 
theory of maximal $\sigma$-finite subdiagonal algebras. 
This is similar to the way in which most of the theory of maximal {\em semifinite} subdiagonal algebras,  developed by Bekjan, 
Sager and others, is a `limiting case' of the 
theory of maximal (tracially) finite subdiagonal algebras (see e.g.\ \cite{Bek,BO,sager}).  

We summarize briefly the structure of our paper. 
In Section \ref{defssf} we define  semi-$\sigma$-finite subdiagonal algebras, and give  some alternative characterizations and examples. 
In Sections \ref{strt} and \ref{Beu}, we  work out some aspects of the basic  Hardy space theory for these algebras, 
including some Beurling  invariant subspace theory.  In Section \ref{Rapp} we introduce a much larger class of algebras that we call {\em Riesz  approximable}, and 
check that the Lebesgue decomposition, F \& M Riesz, and Gleason-Whitney theorems all hold for these.  More generally we generalize several 
of our results from \cite{BLue,blueda} related to the latter theorems to a wider setting.  Some of this is related to the recent paper \cite{ClouatreH}. 

We now turn to notation and some background facts.  We refer the reader to 
\cite{Arv} and our survey \cite{BLsurvey} for the basic ideas, and for the `tracially finite' variant of the theory. 
Suppose  that $M$ is a von Neumann algebra with a faithful normal semifinite weight $\omega$, and modular group $(\sigma_t^\omega)$.

\begin{definition}
A weak* closed unital subalgebra $A$ of $M$ is said to be an \emph{analytically conditioned} subalgebra if  
\begin{itemize}
\item $\sigma_t^\omega(A)=A$ for all $t\in \mathbb{R}$, 
\item $\omega_{|D}$ is semifinite where $D=A\cap A^*$,
\item the faithful normal conditional expectation $\mathbb{E}:\M\to \D=A\cap A^*$ satisfying $\omega\circ\mathbb{E}=\omega$ (ensured by the above condition by
\cite[Theorem IX.4.2]{tak2}), is multiplicative on $A$.
\end{itemize}
\end{definition}

A  subalgebra $A$ of $M$ is said to be 
 {\em subdiagonal with respect to} $\omega$ if in addition to 
being analytically conditioned, it also satisfies the requirement that $A+A^*$ is weak* dense in $M$. 
We say that a subdiagonal  algebra is  maximal subdiagonal (with respect to
$\mathbb{E}$) if  it is not properly contained in
any larger proper subdiagonal algebra in $M$ with respect to ${\mathbb E}$.
For a subspace $F$ of $M$ we  define $F_0$ as usual to be $F \cap {\rm Ker}(\mathbb{E})$, as is 
 usual in the subdiagonal theory.
 
Our analysis requires some background on noncommutative $L^p$-spaces (see e.g.\ \cite{HJX,GLa1,GLa2}). We shall of needs be brief. Full details of all claims made below may be found in \cite{GLa1}. Throughout this discussion $\omega$ will be a fixed faithful normal semifinite reference weight on a von Neumann algebra $M$. The theory is a lot more accessible in the case where the reference weight is tracial (that is $\omega(a^*a)=\omega(aa^*)$ for each $a\in M$). It is convention to denote the reference weight by $\tau$ in the tracial case. In the tracial case the algebra $M$ may be enlarged to the so-called algebra of $\tau$-measurable operators $\widetilde{M}$ which is defined to be the set of all densely defined closed operators $f$ affiliated to $M$ which satisfy the requirement that for some $\epsilon>0$ we have that $\tau(\chi_{(\epsilon,\infty)}(|f|))<\infty$. This enlarged space turns out to be a complete metrizable algebra which is large enough to contain all the $L^p$-spaces and which admits an extension of $\tau$ to $\widetilde{M}$. Given any $1\leq p<\infty$ the space $L^p(M,\tau)$ is then simply defined to be $L^p(M,\tau)=\{f\in \widetilde{M}: \tau(|f|^p)<\infty\}$, with the norm given by $\|f\|_p= \tau(|f|^p)^{1/p}$. 

The construction of $L^p$-spaces for general von Neumann algebras is much more challenging. We shall follow Haagerup's approach to construct these spaces. The first step in this approach is to use the modular automorphism group $\sigma_t^\omega$ ($t\in \mathbb{R}$) induced by $\omega$ to construct the crossed product $M\rtimes_\omega\mathbb{R}$. Here the dual action of $\mathbb{R}$ on this crossed product is induced by a group of automorphisms $(\theta_s)$ on $M\rtimes_\omega\mathbb{R}$ satisfying
\begin{equation}\label{5:eqn dual-R} 
\theta_s(\pi(a))=\pi(a) \mbox{ and }\theta_s(\lambda_t)= e^{-ist}\lambda_t \mbox{ for each } a\in M\mbox{ and }t,s\in \mathbb{R}.
\end{equation}\label{eq:dualmodgp}
With $\widetilde{\omega}$ denoting the dual weight on $M\rtimes_\omega\mathbb{R}$, it is moreover possible to show that  
\begin{equation}\sigma_t^{\widetilde{\omega}}(f)=\lambda_tf\lambda_t^*\mbox{ with } \sigma_t^{\widetilde{\omega}}(\pi(a))=\pi(\sigma_t^\omega(a))\mbox{ for all }f\in(M\rtimes_\omega\mathbb{R})\mbox{ and all }a\in M.
\end{equation} 
By Stone's theorem there exists a nonsingular positive operator $h$ affiliated to $M\rtimes_\omega\mathbb{R}$ for which we have that $\lambda_t=h^{it}$ for all $t\in \mathbb{R}$. The fact that $\sigma_t^{\widetilde{\omega}}$ is implemented 
as above ensures that 
$M\rtimes_\omega \mathbb{R}$ is in fact semifinite, with the prescription $\tau(\cdot)=\widetilde{\omega}(h^{-1}\cdot)$ yielding an fns (that is, 
faithful normal semifinite)
trace on $M\rtimes_\omega\mathbb{R}$ for which we have that $\tau\circ\theta_s=e^{-s}\tau$ for all $s\in \mathbb{R}$. 
So by construction $h$ is just the Pedersen-Takesaki Radon-Nikodym derivative $\frac{d\widetilde{\omega}}{d\tau}$ of $\widetilde{\omega}$ with respect to $\tau$ \cite{PT}. The semifiniteness ensures that $\cM=M\rtimes_\omega\mathbb{R}$ may be enlarged to the algebra $\widetilde{\cM}$ of $\tau$-measurable operators, with each $\theta_s$ in addition extending continuously to 
this enlarged algebra. For each $1\leq p<\infty$ the Haagerup $L^p$-space is then defined to be the space $L^p(M)=\{a\in \widetilde{\cM}: \theta_s(a)=e^{-s/p}a\mbox{ for all }s\in\mathbb{R}\}$. It is known that $L^\infty(M)$ corresponds to the canonical copy of $M$ inside $M\rtimes_\omega \mathbb{R}$.  The space $L^1(M)$ admits a so-called tracial 
functional $tr$, which can be used to realise the norm on $L^p(M)$ by means of the prescription $\|a\|_p=tr(|a|^p)^{1/p}$.

In the case where $e$ is a projection in the centralizer of $\omega$ with $\omega$ semifinite on $eM e$, one may canonically identify $L^p(e M e)$ (constructed using the restriction of $\omega$ to $eM e$) with $eL^p(M)e$. What lies behind this, is the fact that the assumption that $e$ is a fixed point of the modular group, allows one to canonically identify $e(M\rtimes_\omega\mathbb{R})e$ with $(eMe)\rtimes_{\omega_e}\mathbb{R}$ where $\omega_e=\omega_{|eMe}$. Details of the computation may be found in the discussion following Lemma 8 of \cite{Wat} or in \cite{GLa2}.

Each $1\leq p<\infty$ moreover admits a norm-dense bijective embedding $\mathfrak{i}^{(p)}:\mathfrak{m}_\omega\to L^p(M)$ of the algebra $\mathfrak{m}_\omega$ into $L^p(M)$. Formally this embedding may be thought of as a prescription taking the form 
 $a\to h^{1/2p}ah^{1/2p}$. Since here $h$ may not be $\tau$-measurable, there are however significant challenges regarding existence and closability which need to be overcome, regarding which we shall not elaborate here. Details of the construction of these embeddings may be found in Section 2 of \cite{GL2}. In the case considered above where $e$ is a projection in the centralizer of $\omega$, the fact that $\sigma_t^\omega$ is implemented by the unitary group $h^{it}$ inside the crossed product will, when combined with the fact that $e$ is a fixed point of the modular group,  ensure that $e$ commutes with each $h^{it}$ and hence strongly with $h$ itself. On tracing the construction of the embeddings $\mathfrak{i}^{(p)}$ in \cite{GL2}, this fact in turn ensures that we will  have that $\mathfrak{i}^{(p)}(eae)=e\mathfrak{i}^{(p)}(a)e$ for any $a\in \mathfrak{m}_\omega$. 

One interesting fact which we shall have occasion to apply to conditional expectations, concerns positive maps $T:M\to M$ satisfying the condition $\omega\circ T\leq \omega$. For such maps we clearly have that $T(\mathfrak{m}_\omega)\subset \mathfrak{m}_\omega$. However as can be seen from \cite{HJX}, 
the prescription  $T^{(p)}(\mathfrak{i}^{(p)}(a))=\mathfrak{i}^{(p)}(T(a))$ ($a\in \mathfrak{m}_\omega$) yields a $\|\cdot\|_p$-norm bounded operator uniquely extending to a bounded operator on $L^p(M)$.

It is clear from the definition of these Haagerup $L^p$ spaces that they each have a different ``phase''. So to makes\ sense of concepts like intersections of these spaces in a manner which harmonises with the classical setting, these spaces first need to be embedded into a common superspace where their orientation with respect to each other is in line with that of the classical setting. The superspace we
use is the noncommutative analogue of $L^1 + L^\infty$ constructed using the theory of noncommutative Orlicz spaces for general von Neumann algebras as espoused in \cite{GLa2, L-Orlicz}. The so-called fundamental function of $(L^1 + L^\infty)(\mathbb{R})$ is given by $\varphi_{1+\infty}(t)=\min(1,t)$ (where $t\geq 0$). We then use this function to define the space $L^{1+\infty}(M)$ by the prescription $$L^{1+\infty}(M) =\{a\in \widetilde{\cM}: \theta_s(a)=v_s^{1/2}av_s^{1/2} \mbox{ for all } s\in \mathbb{R}\},$$where each $v_s$ is a bounded element of $\cM$ given by $v_s=\varphi_{1+\infty}(h)\varphi_{1+\infty}(h)^{-1}$. (When the fundamental function of $L^p(\mathbb{R})$, namely $t\to t^{1/p}$, is substituted for $\varphi_{1+\infty}(h)$, we get exactly Haagerup's definition of $L^p(M)$.) When equipped with the topology inherited from $\widetilde{\cM}$, this space turns out to be a quasi-Banach space. 

It is known that each $L^p(M)$ admits a canonical embedding $\iota^{(p)}$ of $L^p(M)$ into $L^{1+\infty}(M)$ 
\cite[Proposition 7.22]{GLa1}. We write $\mathscr{L}^p$ for $\iota^{(p)}(L^p)$. The embedding of $M$ into $L^{1+\infty}(M)$ takes the 
form $\iota^{(\infty)}(a)=\varphi_{1+\infty}(h)^{1/2}a\varphi_{1+\infty}(h)^{1/2}$ for all $a\in M$, whereas for each $L^p(M)$ 
($1\leq p<\infty$) the embedding takes the form $\iota^{(p)}(a)=\eta_p(h)^{1/2}a\eta_p(h)^{1/2}$ for all $a\in L^p(M)$ where 
$$\eta_p(t) = \varphi_{1+\infty}(t)t^{-1/p} =\left\{\begin{array}{ll} t^{1/q} &\quad 0\leq t\leq 1\\ t^{-1/p} & \quad t>1\end{array} \right.$$In support of the claim that inside $L^{1+\infty}(M)$ the spaces $L^p(M)$ are appropriately oriented with respect to each 
other, we cite the fact that for any $1\leq p<\infty$ we have that $\iota^{(p)}(\mathfrak{i}^{(p)}\mathfrak{m}_\omega)= \iota^{(\infty)}(\mathfrak{m}_\omega)$. When working with the representations of $L^p(M)$ inside $L^{1+\infty}(M)$, we shall write 
$\mathscr{L}^p(M)$ for $\iota^{(p)}(L^p(M))$ equipped with the norm inherited from $L^p(M)$. We shall similarly write 
$\mathscr{H}^p(A)$ for $\iota^{(p)}(H^p(A))$ equipped with the norm inherited from $L^p(M)$

\section{Emergent paradigms: semi-$\sigma$-finite subdiagonal algebras} \label{defssf} 

Our framework is a von Neumann algebra $M$ and weak* continuous inclusions of weak* closed operator algebras  $D = A \cap A^*  \subset A \subset M$.
In this section we will consider a setting in which $M$ 
admits $\sigma$-finite weak* closed $*$-subalgebras $M_i$ whose union is weak* dense in $M$, 
with each $A_i = A \cap M_i$ a maximal subdiagonal algebra in $M_i$, 
and with $\bigcup_iA_i$ weak* dense in $A$.
Within this class a specific subclass now suggests itself:
 
\begin{theorem} \label{isma} Assume that $M$ is a von Neumann algebra with a faithful normal semifinite weight $\omega$,
with weak* continuous inclusions of weak* closed operator algebras  $D = A \cap A^*  \subset A \subset M$ as above. 
 In this setting, the following conditions are equivalent: 
 \begin{enumerate} 
\item $\A$ is a subdiagonal algebra in $\M$ with respect to $\omega$, 
and $\omega$ is strictly semifinite on $\D$; 
\item $\A$ is  
a subdiagonal algebra in $M$  with respect to $\omega$, 
and  there are $\omega$-finite projections $e_i \in D \cap  M_\omega$ with $e_i \nearrow 1$ (here $M_\omega$ is the centralizer of $\omega$ {\rm \cite{tak2}}); 
 \item  there are $\omega$-finite projections $e_i \in D \cap  M_\omega$ with $e_i \nearrow 1$ 
 such that $\A_i = e_i \A e_i$  
 is a maximal subdiagonal algebra in   the $\sigma$-finite von Neumann algebra 
$\M_i = e_i \M e_i$ with respect to the weight $\omega_{|\M_i}$. 
 \end{enumerate} 
 A subalgebra $\A$ satisfying these conditions  is a maximal subdiagonal algebra, and we have $\D \cap \M_\omega = \D_\omega$.
\end{theorem}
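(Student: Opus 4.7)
The plan is to prove $(1)\Leftrightarrow (2)\Rightarrow (3)\Rightarrow (2)$, and then deduce maximality of $A$ and the identity $D \cap M_\omega = D_\omega$. The latter identity is actually free from the hypothesis $\omega\circ \mathbb{E}=\omega$: this condition combined with \cite[IX.4.2]{tak2} forces $\sigma_t^\omega|_D = \sigma_t^{\omega|_D}$, so the centralizer of $\omega$ in $D$ computed inside $M$ coincides with $D_\omega$. In particular, strict semifiniteness of $\omega|_D$ means, by definition, semifiniteness of $\omega|_{D_\omega}$, which is equivalent to the existence of an increasing net of $\omega$-finite projections $e_i \nearrow 1$ in $D_\omega = D \cap M_\omega$. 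This gives $(1)\Leftrightarrow(2)$ directly.

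For $(2)\Rightarrow (3)$, set $M_i = e_i M e_i$ and $A_i = e_i A e_i$. Since $e_i \in M_\omega$ is $\omega$-finite, $M_i$ is $\sigma$-finite and the modular group of $\omega|_{M_i}$ is $\sigma_t^\omega|_{M_i}$, so $\sigma_t$-invariance of $A_i$ is inherited from that of $A$. Since $e_i\in D$, we have $A_i\cap A_i^* = e_i D e_i$; the map $\mathbb{E}_i : x \mapsto e_i\mathbb{E}(x)e_i$ is the unique $\omega|_{M_i}$-preserving conditional expectation $M_i \to e_i D e_i$, and is multiplicative on $A_i$ because $\mathbb{E}$ is multiplicative on $A \supset A_i$. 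Finally $A_i + A_i^* = e_i(A+A^*)e_i$ is weak* dense in $M_i$. Maximality of $A_i$ in the $\sigma$-finite algebra $M_i$ is then the known $\sigma$-finite result (see e.g.\ \cite{JiSaito,JOS,Xu}) that a subdiagonal algebra whose conditional expectation is multiplicative on it is automatically maximal.

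For $(3)\Rightarrow (2)$, semifiniteness of $\omega|_D$ is immediate from $e_i\in D$, $\omega(e_i)<\infty$, and $e_i\nearrow 1$. To obtain the ambient conditional expectation $\mathbb{E}:M\to D$ with $\omega\circ\mathbb{E}=\omega$ via Takesaki's theorem, I first need $\sigma_t^\omega(D)=D$, which I would bootstrap from $\sigma_t^{\omega|_{M_i}}(e_i D e_i)=e_i D e_i$ together with the weak* density of $\bigcup_i e_iDe_i$ in $D$. By uniqueness, the restrictions $\mathbb{E}|_{M_i}$ must agree with the given $\mathbb{E}_i$. Multiplicativity of $\mathbb{E}$ on $A$ is then a weak* approximation: for $a,b\in A$ the compressions $e_iae_i, e_ibe_i$ lie in $A_i$, converge SOT (hence weak*) to $a,b$, so their bounded products $e_iae_ibe_i$ converge weak* to $ab$; applying $\mathbb{E}$ and using the multiplicativity of each $\mathbb{E}_i$ on $A_i$ yields $\mathbb{E}(ab)=\mathbb{E}(a)\mathbb{E}(b)$. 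A parallel compression argument using weak* density of $A_i+A_i^*$ in $M_i$ shows $A+A^*$ is weak* dense in $M$. The main obstacle here is the compatibility step: ensuring that the family $\{\mathbb{E}_i\}$ genuinely assembles into a single $\mathbb{E}$ on $M$, which requires careful handling of both Takesaki's theorem and the weak* net structure of the $(e_i)$.

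For the maximality assertion, suppose $B \supsetneq A$ is a subdiagonal algebra in $M$ with respect to $\mathbb{E}$. Then each $e_i B e_i \supseteq A_i$ is subdiagonal in $M_i$ with respect to $\mathbb{E}_i$, so maximality of $A_i$ forces $e_i B e_i = A_i \subset A$ for every $i$. Any $b\in B$ is then the weak* limit of $e_ibe_i \in A$, whence $b\in A$, contradicting strict containment. This closes the loop.
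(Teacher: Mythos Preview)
Your argument is correct and follows essentially the same route as the paper: compress to $M_i$, invoke the $\sigma$-finite maximality result of Xu/Ji--Ohwada--Saito, reassemble via Takesaki's theorem and weak* limits, and run the same maximality argument via $e_iBe_i=A_i$. Two small remarks. First, your derivation of $D\cap M_\omega = D_\omega$ from $\sigma_t^\omega|_D=\sigma_t^{\omega|_D}$ (a consequence of \cite[IX.4.2]{tak2}) is cleaner than the paper's, which instead verifies the $\mathfrak{m}_\omega$-characterization of the centralizer directly via \cite[VIII.2.6]{tak2}. Second, in your $(3)\Rightarrow(2)$ step you only explicitly bootstrap $\sigma_t^\omega(D)=D$; you also need $\sigma_t^\omega(A)=A$ to conclude that $A$ is analytically conditioned. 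This follows by the identical density argument, since each $A_i$ is $\sigma_t^{\omega_i}$-invariant as part of being (maximal) subdiagonal in $M_i$, and $\sigma_t^{\omega_i}=\sigma_t^\omega|_{M_i}$ because $e_i\in M_\omega$ --- the paper obtains this invariance by citing the Ji--Ohwada--Saito result referenced in \cite{Xu}.
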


\begin{proof} (2) $\Rightarrow$ (3)\   We have 
$\D_i = e_i \D e_i \subset \A_i \subset \M_i = e_i \M e_i$, and $\A_i$ is subdiagonal in 
$\M_i$.   Indeed   $\A_i  + \A_i^* = e_i (\A+ \A^*)  e_i$ is weak* dense in $e_i \M e_i$, and we have 
$\A_i \cap \A_i^*  = e_i (\A \cap \A^*)  e_i  = \D_i$.     The restriction $\omega_i$ of $\omega$ to $(\M_i)_+$ is finite and extends to a faithful positive normal functional on $\M_i$, so 
$\M_i$ is  $\sigma$-finite. 
Note that $\omega_i$  has a modular automorphism group which is the restriction of $\sigma^\omega_t$ to $\M_i$, and this leaves $\A_i$ invariant.  
Also the restriction of $\mathbb{E}$  to $\M_i$ is $\omega_i$-preserving.
So by the main result in \cite{Xu} (and the remark after it), $\A_i$ is maximal subdiagonal in $\M_i$.

(3) $\Rightarrow$ (2)\   
The restriction $\omega_i$ of $\omega$ to $M_i$ has modular automorphism group which is the restriction of $\sigma^\omega_t$ to 
$\M_i$. As part of the condition in (3) that $\A_i$ is maximal subdiagonal with respect to $\omega_{|\M_i}$ we are 
assuming that there is a $\omega_{|\M_i}$-preserving conditional expectation 
$\mathbb{E}_i : \M_i \to \D_i$  which is multiplicative on $\A_i$. 
Then $\sigma^\omega_t$ leaves $\A_i$ and $\D_i$ invariant by the result of Ji, Ohwada and Saito cited in \cite{Xu} above the main theorem there. 
Hence  $\sigma^\omega_t$ leaves  $\A$ and $\D$ invariant. By Takesaki's theorem there exists a unique 
normal  $\omega$-preserving conditional expectation $\mathbb{E} : \M \to \D$, and by uniqueness its restriction
to $\M_i$ is $\mathbb{E}_i$.   It follows that  $\mathbb{E}$ is multiplicative on $\A$. (To see this note that the $A_i$'s are increasing.)  Finally, since $\cup_i \, \M_i$ is weak* dense in $\M$, and $\A_i  + \A_i^*$ is weak* dense in $\M_i$,  we have
 $\A$ subdiagonal in $\M$. 

(2) $\Rightarrow$ (1)\  This follows from Exercise VIII.2.1 of \cite{tak2} applied to the Claim: $\D \cap \M_\omega \subset \D_\omega$. To prove this claim we will use 
\cite[Theorem VIII.2.6]{tak2}. Suppose that  $x \in \D \cap \M_\omega$ and $y \in {\mathfrak m}^{\D}_\omega$, with $y = n_1^* n_2$ say for 
$n_i \in {\mathfrak n}^{\D}_\omega = \D \cap  {\mathfrak  n}^{\M}_\omega$. Since by \cite[Theorem VIII.2.6]{tak2} we have that $n_2^* n_2 x \in {\mathfrak m}^{\M}_\omega$ and hence that $x^* n_2^* n_2 x  \in {\mathfrak m}^{\M}_\omega$, it is clear that $n_2 x \in {\mathfrak n}^{\D}_\omega$. Thus $y x = n_1^* n_2x \in  {\mathfrak m}^{\D}_\omega$.   Similarly 
$xy \in  {\mathfrak m}^{\D}_\omega$. Hence we clearly have that $x \in \D_\omega$.

 (1) $\Rightarrow$ (2)\   Since by definition 
 $\sigma^\omega_t$ leaves  $A$  invariant, it clearly leaves $D$ invariant, so that 
the restriction of $\sigma^\omega_t$ to  $D$ is the automorphism group of $\omega_{| D}$.  
 So $\D \cap \M_\omega = \D_\omega$ and using one of the well known characterizations of  strictly semifinite weights 
(see e.g.\ \cite{GLa1,GLa2}) we obtain an increasing net $e_i  \nearrow 1$ as in (2).

To see that $\A$ is indeed maximal suppose that $\A \subset \B \subset \M$ where $\B$ is a 
subdiagonal algebra with respect to  $\mathbb{E}$.   In particular $\B \cap \B^* = \D$.  Then $\D_i \subset \A_i \subset \B_i = e_i \B e_i  \subset \M_i$,
and $\B_i$ is easily seen as in the last proof to be subdiagonal in the $\sigma$-finite von Neumann algebra
$\M_i$.     So $\A_i = \B_i$ by the main theorem in \cite{Xu}.  Thus  any $x \in \B$ has $x = \lim_i \, e_i x e_i \in \A$.  So $\A$ is a maximal subdiagonal algebra. 
\end{proof}

\begin{quote} An algebra $\A$ satisfying all of the above conditions  will be called a \emph{semi-$\sigma$-finite subdiagonal algebra}.\end{quote} 
  This class has essentially appeared very briefly in the literature, on the last page of \cite{Xu} where it is defined in terms
  of a strictly semifinite weight 
  (to see the connection with (1) one 
  may use  the remark after \cite[Theorem 1.1]{Xu}).
   A different route to the maximality of such $A$ was stated there with hints. 
It is true but not at present clear that all of the algebras 
 considered on the last page of  \cite{Xu} are in fact semi-$\sigma$-finite subdiagonal algebras; this needs a generalization of 
 \cite[Theorem 2.4]{JOS} that will appear in forthcoming work of the second author.

Semi-$\sigma$-finite subdiagonal algebras generalize maximal subdiagonal algebras in   both semifinite and in $\sigma$-finite von Neumann algebras:
 
 \begin{proposition} \label{isma2} A maximal subdiagonal algebra in  a semifinite or a $\sigma$-finite von Neumann algebras  is
 a  semi-$\sigma$-finite subdiagonal algebra.
\end{proposition}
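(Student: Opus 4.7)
The plan is to verify condition (2) of Theorem \ref{isma} in each of the two cases; by the equivalence established in that theorem this is all that is required to conclude that $A$ is semi-$\sigma$-finite.

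For the semifinite case, I would take the reference weight to be the faithful normal semifinite trace $\tau$ on $M$. Since the modular automorphism group $\sigma_t^\tau$ is trivial, the centralizer $M_\tau$ is all of $M$, and hence $D \cap M_\tau = D$. Because $A$ is (in particular) analytically conditioned, the restriction $\tau_{|D}$ is a faithful normal semifinite trace on the von Neumann algebra $D$, and the standard theory of fns traces on a von Neumann algebra produces an increasing net of $\tau$-finite projections $e_i$ in $D$ with $e_i \nearrow 1$. These projections directly witness condition (2).

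For the $\sigma$-finite case, I would take $\omega$ to be a faithful normal state, the customary reference in this setting. The unit $1$ lies in $D$, is $\omega$-finite since $\omega(1)=1$, and belongs to $M_\omega$ since $\sigma_t^\omega(1)=1$. The constant (``net'') $e_i = 1$ therefore satisfies (2) trivially. Equivalently, since $\omega_{|D}$ is a state it is automatically strictly semifinite, directly verifying condition (1) of Theorem \ref{isma}.

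The argument is essentially bookkeeping rather than substance, and I do not anticipate any real obstacle; the key observations are the triviality of the modular group in the tracial (semifinite) case, which forces $M_\tau = M$ and hands us the required projections out of the semifiniteness of $\tau_{|D}$, and the boundedness of a state in the $\sigma$-finite case, which makes the unit itself the desired projection. The only point that needs some care is to confirm that the ``natural'' reference weight has been selected in each setting.
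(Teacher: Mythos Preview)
Your proposal is correct and follows essentially the same argument as the paper: in the $\sigma$-finite case one takes $\omega$ to be a faithful normal state and uses the constant net $e_i=1$, while in the semifinite case one uses the triviality of $\sigma_t^\tau$ to get $D\cap M_\tau=D$ and then pulls the required projections from the semifiniteness of $\tau_{|D}$. The paper's proof is slightly terser but makes exactly these two observations.
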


\begin{proof}   
  In the $\sigma$-finite case  we are considering the maximal subdiagonal algebra  studied in e.g.\ \cite{JOS}.
Here $\omega$ is finite, so strictly semifinite on $D$, and we  may take $e_i = 1$ in the definition of a semi-$\sigma$-finite subdiagonal algebra.  
In the semifinite  case we are assuming (as in the papers on subdiagonal algebra in  the semifinite case referred to above) 
that  $\omega$ is a semifinite trace on both $M$ and $D$ and that $A$ is subdiagonal with respect to $\omega$.
Here  $\sigma^\omega_t$ is the
identity map.   The existence of   $\omega$-finite projections $e_i \in D = D \cap  M_\omega$ with $e_i \nearrow 1$ is immediate from the semifiniteness.   \end{proof}

We end this section by mentioning further examples of  semi-$\sigma$-finite subdiagonal algebras.
 Every von Neumann algebra $M$ is a semi-$\sigma$-finite subdiagonal algebra (so $A = M$).   This follows from 
the fact that every von Neumann algebra possesses a strictly semifinite faithful normal weight.   

If $A$ is a maximal $\sigma$-finite (or semifinite) subdiagonal algebra in $M$, then $A \bar{\otimes} B(H)$ is a   semi-$\sigma$-finite subdiagonal algebra.
Indeed for any von Neumann algebra $N$ we have that $A  \bar{\otimes} N$ is a  semi-$\sigma$-finite subdiagonal algebra (in $M \bar{\otimes} N$).
More generally the latter is also true if $A$ is a  semi-$\sigma$-finite subdiagonal algebra.  

To see these, we use several facts from e.g.\ \cite[Chapters 8 and 9]{Stratila}: recall that $M \bar{\otimes} N$ has a canonical faithful normal semifinite weight built from the two weights on $M$ and $N$.  Then  $A \bar{\otimes} N + A^* \bar{\otimes} N$ is certainly weak* dense in $M \bar{\otimes} N$.   That $(A \bar{\otimes} N) \cap  (A^* \bar{\otimes} N) = D \bar{\otimes} N$ follows easily 
from Tomiyama's `Fubini' slice map theorem: any $x \in (A \bar{\otimes} N) \cap  (A^* \bar{\otimes} N)$ has its left slices $L_\psi(x) \in A \cap A^* = D$ for $\psi \in N_*$. 
If $E$ is the weight preserving expectation onto $D = A \cap A^*$
then $E \otimes I_N$ is a weight preserving normal conditional expectation onto $D \bar{\otimes} N$, and it is multiplicative
on $A \bar{\otimes} N$.   The modular group for $M \bar{\otimes} N$ is 
$\sigma^M_t \otimes \sigma^N_t$, and this preserves $A \bar{\otimes} N$.  Finally suppose that $e_t \nearrow 1$ in $M$ and $f_s \nearrow 1$ in $M$ are in the centralizers
and are finite with respect to the respective 
weights.  Then $e_t \otimes f_s \nearrow 1 \otimes 1$, and $(\sigma^M_t \otimes \sigma^N_t)(e_t \otimes f_s) = e_t \otimes f_s$.  So $A \bar{\otimes} N$
is  a  semi-$\sigma$-finite subdiagonal algebra (in $M \bar{\otimes} N$).

\section{Basic structural theory of $H^p$ spaces of semi-$\sigma$-finite subdiagonal subalgebras} \label{strt} 
  
 As usual, for $1 \leq p < \infty$ 
we define $H^p(A)$ to be the closure in the $p$-norm of $\mathfrak{i}^{(p)}(\A \cap {\mathfrak m}^{\M}_\omega)$, and $H^p_0(\A)$ to be the closure in the $p$-norm of $\mathfrak{i}^{(p)}(\A_0 \cap {\mathfrak m}^{\M}_\omega)$. 
The utility of 
the following theorem is the approximation it affords. (See the comment following the theorem for details.)   
 
\begin{theorem} \label{decomp}  Suppose that we have weak* continuous inclusions of weak* closed operator algebras  $\D = \A \cap \A^*  \subset \A \subset \M$ ,
 and  $\M$ is a von Neumann algebra with a faithful normal semifinite weight $\omega$ and a normal  $\omega$-preserving conditional expectation $E : \M \to \D$.  We also assume that $\omega$ is  semifinite on $\D$, and moreover that there is an increasing net of  $\omega$-finite projections $e_i \in \D \cap \M_\omega$ with $e_i \nearrow 1$.  Let  $1 \leq p \leq \infty$, and let $\M_i, \A_i, \D_i$ be the usual compressions by $e_i$. 
 Then $L^p(\M_i) = e_i L^p(\M) e_i$ for each $i$, these are increasing with $i$, and the union of the $L^p(\M_i)$ 
 is dense in $L^p(\M)$.   Similarly $H^p(A_i)= [\mathfrak{i}^{(p)}(\A_i)]_p$ equals  $e_i H^p(\A) e_i$, and  these sets  are increasing with $i$, and  the $p$-closure 
of their union is $H^p(\A)$. Similarly $H^p_0(\A_i) = [\mathfrak{i}^{(p)}((\A_i)_0)]_p = e_i H^p_0(\A) e_i$, and these are increasing with the $p$-closure 
of their union being $H^p_0(\A) = H^p(\A) \cap {\rm Ker}(E_p)$.

In all of these increasing closures $F = \overline{\cup_i \, e_i F e_i}$ above, any $x \in F$ is a $p$-norm limit  $x = \lim_i \, e_i x e_i$, if $1 \leq p \leq \infty$
(a SOT limit if $p = \infty$).  \end{theorem}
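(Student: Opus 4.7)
My plan is to establish everything in three stages: first the $L^p$ identification, then the $H^p$ identification, then $H^p_0$ and the closing statement. Throughout, the key structural fact is that $e_i \in D \cap M_\omega$ commutes strongly with the Radon-Nikodym derivative $h = d\widetilde{\omega}/d\tau$ (noted in the introduction since $e_i$ is fixed by $\sigma_t^\omega$, hence commutes with each $h^{it}$), and therefore the compression $a \mapsto e_i a e_i$ intertwines $\mathfrak{i}^{(p)}$: one has $\mathfrak{i}^{(p)}(e_i a e_i) = e_i \mathfrak{i}^{(p)}(a) e_i$ for $a \in \mathfrak{m}_\omega$. Combined with the canonical identification $L^p(e_i M e_i) = e_i L^p(M) e_i$ recalled in the introduction (valid precisely because $e_i$ is in the centralizer), this gives $L^p(M_i) = e_i L^p(M) e_i$. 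Monotonicity is immediate from $e_i = e_i e_j = e_j e_i$ when $e_i \leq e_j$.

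For density of $\bigcup_i L^p(M_i)$ in $L^p(M)$ when $1 \leq p < \infty$, I would exploit that $\mathfrak{i}^{(p)}(\mathfrak{m}_\omega)$ is $\|\cdot\|_p$-dense. For $y = \mathfrak{i}^{(p)}(a)$, the intertwining formula gives $e_i y e_i = \mathfrak{i}^{(p)}(e_i a e_i) \in L^p(M_i)$, so it suffices to show $e_i y e_i \to y$ in $\|\cdot\|_p$. Since the left and right compressions by $e_i$ are $\|\cdot\|_p$-contractions on $L^p(M)$ (this uses the strong commutation with $h$ to move $e_i$ past $h^{1/2p}$ factors), and since $(e_i a e_i - a) = (e_i - 1)ae_i + a(e_i - 1) \to 0$ strongly in $M$, an $\eps/3$ argument using the contractive nature of left/right multiplication plus the density of $\mathfrak{i}^{(p)}(\mathfrak{m}_\omega)$ delivers $e_i x e_i \to x$ for every $x \in L^p(M)$. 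For $p = \infty$, $e_i x e_i \to x$ in SOT is immediate from $e_i \nearrow 1$. I expect this strong-convergence step to be the most delicate: one must verify $L^p$-norm convergence, not just convergence in the ambient quasi-Banach $L^{1+\infty}$, and this relies essentially on the commutation of $e_i$ with $h$.

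For $H^p(A_i)$, I would first observe that $A_i = e_i A e_i \subset e_i M e_i \subset \mathfrak{m}_\omega$: any $x \in (e_i M e_i)_+$ satisfies $x \leq \|x\| e_i$, so $\omega(x) \leq \|x\|\omega(e_i) < \infty$. Hence $A_i \cap \mathfrak{m}_\omega = A_i$ and $H^p(A_i) = [\mathfrak{i}^{(p)}(A_i)]_p$. Boundedness of $y \mapsto e_i y e_i$ together with the intertwining formula yields
\[
e_i H^p(A) e_i = \overline{e_i \mathfrak{i}^{(p)}(A \cap \mathfrak{m}_\omega) e_i}^{\, \|\cdot\|_p} = \overline{\mathfrak{i}^{(p)}(e_i (A \cap \mathfrak{m}_\omega) e_i)}^{\, \|\cdot\|_p} = \overline{\mathfrak{i}^{(p)}(A_i)}^{\, \|\cdot\|_p} = H^p(A_i),
\]
and the density $H^p(A) = \overline{\bigcup_i H^p(A_i)}$ then reduces once more to $e_i x e_i \to x$ applied to elements of $\mathfrak{i}^{(p)}(A \cap \mathfrak{m}_\omega)$.

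Finally, for $H^p_0(A)$, by uniqueness of $\omega$-preserving conditional expectations (Takesaki, as used in the proof of Theorem~\ref{isma}), the restriction of $E$ to $M_i$ is the $\omega_{|M_i}$-preserving expectation onto $D_i$, so $(A_i)_0 = A_i \cap \ker E = e_i A_0 e_i$ and the $H^p$-argument transfers verbatim. The identity $H^p_0(A) = H^p(A) \cap \ker(E_p)$ follows by approximating any $x \in H^p(A) \cap \ker(E_p)$ by $\mathfrak{i}^{(p)}(a_n)$ for $a_n \in A \cap \mathfrak{m}_\omega$ and noting that $a_n - E(a_n) \in A_0 \cap \mathfrak{m}_\omega$ while $\mathfrak{i}^{(p)}(a_n - E(a_n)) = \mathfrak{i}^{(p)}(a_n) - E_p(\mathfrak{i}^{(p)}(a_n)) \to x - E_p(x) = x$. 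The last sentence of the theorem is then a restatement of the $e_i x e_i \to x$ convergence established repeatedly along the way.
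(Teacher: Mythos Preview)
Your overall architecture matches the paper's: identify $L^p(M_i)=e_iL^p(M)e_i$ via the commutation of $e_i$ with $h$, establish the key convergence $e_ixe_i\to x$ in $\|\cdot\|_p$, and then transfer everything to $H^p$ and $H^p_0$ through the intertwining $\mathfrak{i}^{(p)}(e_iae_i)=e_i\,\mathfrak{i}^{(p)}(a)\,e_i$. The $H^p$ and $H^p_0$ portions (including $(A_i)_0=e_iA_0e_i$ and the description of $H^p_0(A)$ as $H^p(A)\cap\ker E_p$) are essentially identical to what the paper does.

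The genuine gap is in your justification of the base case $e_iye_i\to y$ in $\|\cdot\|_p$ for $y=\mathfrak{i}^{(p)}(a)$ with $a\in\mathfrak{m}_\omega$. You correctly flag this as the delicate point, but the mechanism you propose---strong convergence $e_iae_i\to a$ in $M$---does not deliver it: the embedding $\mathfrak{i}^{(p)}:\mathfrak{m}_\omega\to L^p(M)$ is not continuous from the strong operator topology to the $p$-norm, so a bounded net going to~$0$ strongly in $M$ need not have images going to~$0$ in $L^p$. Your $\eps/3$ step legitimately reduces general $x$ to this base case, but the base case itself remains unproven. The paper avoids this difficulty entirely by arguing directly for arbitrary $\xi\in L^p(M)$: first, trace duality gives \emph{weak} convergence $e_i\xi\to\xi$ and $\xi e_i\to\xi$ in $L^p$ (for $\nu\in L^q$ one has $tr(\nu e_i\xi)=tr(e_i\xi\nu)\to tr(\xi\nu)$ since $\xi\nu\in L^1$ and $tr$ is normal on $M$), which in particular forces $\xi e_i=0$ for all $i$ to imply $\xi=0$; second, \cite[Lemma~2.4]{JS} is invoked to upgrade this to norm convergence $\xi e_i\to\xi$, continuity of the adjoint in $\|\cdot\|_p$ gives $e_i\xi\to\xi$, and a triangle inequality finishes $e_i\xi e_i\to\xi$. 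If you want to salvage your route, you would need an independent argument for $\|\mathfrak{i}^{(p)}(a(1-e_i))\|_p\to 0$, and the cleanest such argument is precisely the weak-then-norm upgrade the paper uses.
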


\begin{proof} 
We said already that  $L^p(\M_i) = e_i L^p(\M) e_i$. 
 Of course $e_i$ acts on the left and right as contractive projections on $L^p(\M)$, and $e_i {\mathfrak n}^{\M}_\omega e_i \subset {\mathfrak n}^{\M_i}_\omega$. 
  Similarly, if $e_i \leq e_j$ then $L^p(\M_i) \subset L^p(M_j)$.   That the union of the $L^p(M_i)$ 
 is dense in $L^p(M)$ follows from the Claim: If  $e_i \nearrow 1$ in $M$ then 
for each $\xi \in L^p(M)$ both $(\xi e_i)$ and $(e_i\xi)$ converge to $\xi$ in $p$-norm.    To see this 
 we first note that $e_i \, \xi \to \xi$ weakly in $L^p(M)$.   Indeed this follows by definition of the  $L^p(M)$ and trace duality: if $\nu \in L^q(M)$ then $r = \xi \nu \in L^1(M)$ and  
 $${\rm tr} (\nu e_i \xi) = {\rm tr} (e_i r) \to  {\rm tr} ( r) =  
 {\rm tr} (\nu \xi) ,$$
 where tr is the tracial functional on $L^1$.   Similarly $\xi e_i \to \xi$ weakly.  Thus if $\xi e_i = 0$ (or $e_i \xi = 0$) for all $i$ then $\xi = 0$.
 It then follows from \cite[Lemma 2.4]{JS} that $\xi e_i \to \xi$ in $p$-norm.   By continuity of * in the $p$-norm (see  4.12(ii) and 7.12 in \cite{GLa1}) we have
 $e_i \xi \to \xi$ in $p$-norm.  Then $$\| e_i \xi e_i -  \xi \|_p \leq
 \| e_i ( \xi e_i -  \xi ) \|_p + \| e_i \xi -  \xi \|_p \to 0.$$
 
 We defined $H^p(A_i)$ to be $[\mathfrak{i}^{(p)}(A_i)]_p$ in $L^p(M_i)$.   Claim: $[\mathfrak{i}^{(p)}(A_i)]_p =e_i [\mathfrak{i}^{(p)}(A \cap {\mathfrak m}^M_\omega)]_p e_i$.   That $[\mathfrak{i}^{(p)}(A_i)]_p \subset  e_i [\mathfrak{i}^{(p)}(A \cap {\mathfrak m}^M_\omega)]_p e_i$ is easy to see,
and the  converse is clear since $e_i (A \cap {\mathfrak m}^M_\omega) e_i \subset  A_i$.    Similarly the $[\mathfrak{i}^{(p)}(A_i)]_p$ are increasing, and  the $p$-closure $F$ 
of their union is $[\mathfrak{i}^{(p)}(A \cap {\mathfrak m}^M_\omega)]_p$.  Indeed the $[\mathfrak{i}^{(p)}(A_i)]_p$ are clearly  contained in $[\mathfrak{i}^{(p)}(A \cap {\mathfrak n}^M_\omega)]_p$  and hence also $F$.
If $a \in A \cap {\mathfrak m}^M_\omega$ then $\mathfrak{i}^{(p)}(e_i a e_i) \in [ A_i ]_p$, and as we saw above
$\| \mathfrak{i}^{(p)}(e_i a e_i) - \mathfrak{i}^{(p)}(a) \|_p \to 0$.  So $[\mathfrak{i}^{(p)}(A \cap {\mathfrak m}^M_\omega]_p$ is contained in $F$. 
Claim: $(A_i)_0 = e_i A_0 e_i$.   Indeed clearly $e_i A_0 e_i \subset A_i \cap A_0 = (A_i)_0$.  Conversely,  $(A_i)_0 \subset A_i \cap A_0 \subset e_i A_0 e_i$.  
A similar argument to the above applied to  $(A_i)_0 = e_i A_0 e_i$ shows that we have $H^p_0(A_i) = [\mathfrak{i}^{(p)}((A_i)_0)]_p = e_i H^p_0(A) e_i$, and that these are increasing with the $p$-closure 
of their union being $[\mathfrak{i}^{(p)}(A_0 \cap {\mathfrak m}^M_\omega)]_p$.   \end{proof} 

The hypotheses of the theorem hold if $A$ is a semi-$\sigma$-finite subdiagonal subalgebra (note that the last proof
 did not use subdiagonality of $A$ or that $A$ is invariant under 
the modular group).
Throughout the rest of this part we will assume that $A$ is a semi-$\sigma$-finite subdiagonal subalgebra of a von Neumann algebra $M$ with respect to a given faithful normal semifinite weight
 $\omega$, in the sense described in Theorem \ref{isma}.

 The key point is that if $A$ is a  semi-$\sigma$-finite subdiagonal algebra in $M$ then the $A_i$ are maximal $\sigma$-finite subdiagonal algebra in $M_i$, 
with the last theorem then providing a very convenient simplification of Haagerup's reduction method and its application to general subdiagonal algebras.  In particular  
$L^p(M)$ is simply the completion of the increasing union vector space $\cup_i \, L^p(M_i)$  in the $p$-norm, and $H^p(A)$ is simply the completion of the 
increasing union of subdiagonal Hardy spaces $H^p(A_i)$.  As we said in the introduction, the $\sigma$-finite Hardy space theory has been  developed by 
many authors.  Using this technology, most of the theory of semi-$\sigma$-finite subdiagonal algebras will follow as a `limiting case' of the 
theory of maximal $\sigma$-finite subdiagonal algebras.    We give several examples of this approach:

 \begin{corollary}  \label{L2dens} {\rm ($L^2$-density) \ } For a semi-$\sigma$-finite subdiagonal algebra
 the closure of $H^2(A) + H^2(A)^*$ is $L^2(M)$. 
 \end{corollary}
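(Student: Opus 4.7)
The plan is to reduce to the $\sigma$-finite case via Theorem \ref{decomp} and invoke the known $L^2$-density theorem for maximal $\sigma$-finite subdiagonal algebras. By Theorem \ref{isma} we may fix $\omega$-finite projections $e_i \in D \cap M_\omega$ with $e_i \nearrow 1$ such that each $A_i = e_i A e_i$ is a maximal subdiagonal algebra in the $\sigma$-finite von Neumann algebra $M_i = e_i M e_i$ with respect to $\omega_{|M_i}$. For such maximal $\sigma$-finite subdiagonal algebras the $L^2$-density theorem is in the literature (see e.g.\ \cite{JOS,Xu,Ji}): the closure of $H^2(A_i) + H^2(A_i)^*$ equals $L^2(M_i)$.

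Next I would invoke Theorem \ref{decomp}. It gives $L^2(M_i) = e_i L^2(M) e_i$ with $\bigcup_i L^2(M_i)$ 2-norm dense in $L^2(M)$, and more precisely $e_i \xi e_i \to \xi$ in 2-norm for each $\xi \in L^2(M)$. It also gives $H^2(A_i) = e_i H^2(A) e_i \subset H^2(A)$, and on taking adjoints $H^2(A_i)^* \subset H^2(A)^*$.

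Combining these, given $\xi \in L^2(M)$ and $\epsilon > 0$, first choose $i$ with $\|e_i \xi e_i - \xi\|_2 < \epsilon/2$; then since $e_i \xi e_i \in L^2(M_i)$ and $H^2(A_i) + H^2(A_i)^*$ is 2-norm dense in $L^2(M_i)$, pick $f + g^* \in H^2(A_i) + H^2(A_i)^*$ with $\|e_i \xi e_i - (f + g^*)\|_2 < \epsilon/2$. Since $f, g \in H^2(A_i) \subset H^2(A)$, the approximant lies in $H^2(A) + H^2(A)^*$, and the triangle inequality yields $\|\xi - (f + g^*)\|_2 < \epsilon$.

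There is no serious obstacle here; the whole point is that Theorem \ref{decomp} reduces the $L^2$-density statement to the $\sigma$-finite case, which is exactly the paradigm advertised in the paragraph preceding the corollary. The one thing to be mildly careful about is that the identification $H^2(A_i)^* = e_i H^2(A)^* e_i$ uses continuity of the involution on $L^2$, already recorded in the proof of Theorem \ref{decomp} via 4.12(ii) and 7.12 of \cite{GLa1}.
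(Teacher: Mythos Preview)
Your proof is correct and follows essentially the same approach as the paper: both reduce to the $\sigma$-finite case via Theorem \ref{decomp}, use the known $L^2$-density of $H^2(A_i)+H^2(A_i)^*$ in $L^2(M_i)$, and then pass to the limit using that $\bigcup_i L^2(M_i)$ is dense in $L^2(M)$. Your version is merely more explicit in writing out the $\epsilon/2$ argument.
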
 

\begin{proof}   Clearly  in the setting of the previous result $L^2(M)$ contains the unions of the $[\mathfrak{i}^{(p)}(A_i + A_i^*)]_2$.
So for $L^2$-density in $L^2(M)$ in such a setting we just in addition need $L^2$-density in $L_2(M_i)$, i.e.\ that $\mathfrak{i}^{(p)}(A_i + A_i^*)$ is $2$-dense in $L_2(M_i)$.    This is well known for 
maximal subdiagonal algebras in a $\sigma$-finite von Neumann algebra such as our $M_i$.  
 \end{proof} 
 
 For a semi-$\sigma$-finite subdiagonal algebra we saw that we have $L^p(D) \subset L^p(M)$, and by duality it 
 follows easily that there exists an expectation $E_p : L^p(M) \to L^p(D)$ that 
 is modular over ${\mathfrak m}^D_\omega$.   Generally conditional expectations $L^p(M) \to L^p(D)$  have been studied in \cite{HJX,JX} and \cite{Gold},
 and the existence of $E_p$ in general is shown e.g.\ in  \cite{GLa2}.  We sketch a quick proof in 
 our setting: The proof of \cite[Lemma 2.2]{JX} shows that the expectation $E_p^i : L^p(M_i) \to L^p(D_i)$ is the 
 dual of the inclusion $L^p(D_i) \subset L^p(M_i)$.  Thus if $i : L^p(D) \to L^p(M)$ is the inclusion then by Banach space duality
 and the fact that $L^p(M_i) = e_i L^p(M) e_i$ it is easy to see that $i^*_{| L^p(M_i)}$ maps
 into the copy of $L^p(D_i)$ and agrees with $E_p^i$.   Thus $i^* : L^p(M) \to L^p(D)$ is an expectation, and by density it will have the desired  
 properties of the expectation (e.g.\ most of the assertions of \cite[Propositions 2.2 and 2.3]{JX}). 
 Also, $E_p = \mathbb{E}^{(p)}$ on $\mathfrak{i}^{(p)}({\mathfrak m}_\omega)$.

\begin{corollary}  \label{Bek3.2} For a semi-$\sigma$-finite subdiagonal algebra we have $L^p(D) \subset H^p(A)$, and the expectation $E_p : L^p(M) \to L^p(D)$ 
maps $H^p(A)$ onto $L^p(D)$.
\end{corollary}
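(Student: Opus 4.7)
The plan is to apply the limiting-case philosophy described just before the corollary: reduce everything to the compressions $\A_i = e_i \A e_i$, which are maximal subdiagonal algebras in the $\sigma$-finite von Neumann algebras $\M_i$, invoke the known $\sigma$-finite analogue (see e.g.\ \cite{BekR} or \cite{JiAnalytic}) which asserts that $L^p(\D_i) \subset H^p(\A_i)$ and $E_p^i(H^p(\A_i)) = L^p(\D_i)$, and then pass to the limit using Theorem \ref{decomp}.

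Concretely, first I would note that $\D$ itself, equipped with $\omega_{|\D}$ and the projections $e_i \in \D \cap \M_\omega$, satisfies the hypotheses of Theorem \ref{decomp} (here $\A$ is simply replaced by $\D$, with $E$ the identity). Hence $\cup_i \, L^p(\D_i)$ is $p$-norm dense in $L^p(\D)$, where $L^p(\D_i) = e_i L^p(\D) e_i$. For each $i$ the $\sigma$-finite result gives $L^p(\D_i) \subset H^p(\A_i)$, and by Theorem \ref{decomp} we have $H^p(\A_i) = e_i H^p(\A) e_i \subset H^p(\A)$. Combining these, $\cup_i L^p(\D_i) \subset H^p(\A)$; since $H^p(\A)$ is $p$-norm closed, the density statement yields $L^p(\D) \subset H^p(\A)$.

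For the surjectivity of $E_p$, the range of $E_p$ is by construction $L^p(\D)$, so $E_p(H^p(\A)) \subset L^p(\D)$. In the reverse direction, $E_p$ is an idempotent (it agrees with $\mathbb{E}^{(p)}$ on $\mathfrak{i}^{(p)}({\mathfrak m}_\omega)$, which is dense, and $\mathbb{E}$ is the identity on $\D$), hence $E_p$ fixes $L^p(\D)$ pointwise. Combined with the inclusion just established, $L^p(\D) = E_p(L^p(\D)) \subset E_p(H^p(\A))$, giving equality.

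The only non-trivial ingredient is the $\sigma$-finite version of the statement itself; this is where the work has really been done in the literature, and the bookkeeping for the consistency of $E_p$ with $E_p^i$ on $e_i L^p(\M) e_i$ (already arranged in the paragraph preceding the corollary via the duality argument from \cite[Lemma 2.2]{JX}) is what makes the limiting argument run cleanly. No serious obstacle is expected beyond correctly identifying $L^p(\D_i)$ with $e_i L^p(\D) e_i$, which is justified by the discussion of compressions by centralizer projections in the preliminaries.
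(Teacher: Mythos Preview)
Your proof is correct and follows essentially the same route as the paper: reduce to the compressions via Theorem \ref{decomp}, use the $\sigma$-finite inclusion $L^p(\D_i)\subset H^p(\A_i)$ and density to get $L^p(\D)\subset H^p(\A)$, then use that $E_p$ is the identity on $L^p(\D)$ for surjectivity. The paper's version is marginally leaner in that it does not invoke any $\sigma$-finite literature for the inclusion, observing instead the trivial containment $\mathfrak{i}^{(p)}(\D_i)\subset[\mathfrak{i}^{(p)}(\A_i)]_p$ directly from $\D_i\subset\A_i$.
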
 

\begin{proof} 
We of course have $\mathfrak{i}^{(p)}(D_i) \subset [\mathfrak{i}^{(p)}(A_i)]_p \subset L^p(M_i)$, and so $L^p(D) \subset H^p(A)$.    The last assertion follows by topology 
since $E_p(H^p(A_i)) \subset L^p(D_i)\subset L^p(D)$ with $E_p$ restricting to the identity on $\mathfrak{i}^{(p)}(D_i)$.  \end{proof}

 In the language of e.g.\ \cite{GLa1} 
 the trace functional on $L^1(M_i)$ is ${\rm tr}(e_i a e_i)$, where  ${\rm tr}$ is the trace on $L^1(M)$.
Note that $M_i \subset {\mathfrak m}_\omega$. 
We see that ${\rm tr} = {\rm tr}_i$ on $M_i$.   In particular, ${\rm tr}(x) = \lim_i \, {\rm tr}(e_i x e_i) = \lim_i \, {\rm tr}_i (e_i x e_i)$.

We also have a trace $L^p$ duality.  If $x \in L^p, y \in L^q$ then $xy \in L^1$ and $e_i x e_i \to x$ in $p$-norm, $e_i y e_i \to y$ in $q$-norm,
$e_i x e_i y e_i \to xy$ in $1$-norm.  So $$\langle x , y \rangle = {\rm tr}(xy) = \lim_i \, {\rm tr}_i (e_i x y e_i) = \lim_i \, {\rm tr}_i (e_i x e_i y e_i).$$

As an easy consequence of the trace duality of noncommutative $L^p$-spaces we obtain the following:

\begin{proposition}  \label{Bek3.3ii}  For a semi-$\sigma$-finite subdiagonal algebra $A$ we have that
\begin{enumerate}
\item $H^p(A) = H^q_0(A)^\perp$ in $L^p(M)$ and $H^p_0(A) = H^q(A)^\perp$ for $1 < p,q < \infty$ with $\frac{1}{p}+\frac{1}{q}=1$;
\item $\mathscr{H}^q(A)  \cap \mathscr{H}^p(A) = \mathscr{H}^q(A)  \cap \mathscr{L}^p(A)$ and $\mathscr{H}^q_0(A)  \cap \mathscr{H}^p(A) = \mathscr{H}^q_0(A)  \cap \mathscr{L}^p(A)$ for all $ p, q \geq 1$.
\end{enumerate} 
\end{proposition}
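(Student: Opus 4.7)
My plan is to reduce both statements to the $\sigma$-finite case, where the analogues are known from the cited literature (e.g.\ \cite{JOS,Xu,BekR,blueda}), by applying the compression/approximation machinery of Theorem \ref{decomp}. The key technical fact used throughout is that each $e_i \in D \cap M_\omega$ is a fixed point of the modular group, so $e_i$ commutes with every $h^{it}$, hence strongly with $h$, and therefore with any bounded Borel function of $h$. In particular $e_i$ commutes with $\eta_p(h)^{1/2}$ and $\varphi_{1+\infty}(h)^{1/2}$, so the compression $\xi \mapsto e_i \xi e_i$ commutes both with $\mathfrak{i}^{(p)}$ (as already noted in the preliminaries) and with the superspace embedding $\iota^{(p)}$ into $L^{1+\infty}(M)$. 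I also freely use that $\mathrm{tr}$ agrees with $\mathrm{tr}_i$ on $L^1(M_i) = e_i L^1(M) e_i$, as noted in the excerpt above.

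For statement (1), the inclusion $H^p(A) \subset H^q_0(A)^\perp$ follows by taking $x \in H^p(A)$, $y \in H^q_0(A)$, writing $x = \lim_i e_i x e_i$ and $y = \lim_i e_i y e_i$ as $p$- and $q$-norm limits with $e_i x e_i \in H^p(A_i)$ and $e_i y e_i \in H^q_0(A_i)$ (Theorem \ref{decomp}), and invoking the $\sigma$-finite duality inside $M_i$ to get $\langle e_i x e_i, e_i y e_i \rangle = 0$; continuity of the trace pairing then yields $\langle x, y \rangle = 0$. For the converse $H^q_0(A)^\perp \subset H^p(A)$, suppose $x \in L^p(M)$ annihilates $H^q_0(A)$. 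For any $w \in H^q_0(A_i) \subset L^q(M_i)$ one has $e_i w e_i = w$, so
\[ \langle e_i x e_i, w \rangle = \langle x, e_i w e_i \rangle = \langle x, w \rangle = 0. \]
The $\sigma$-finite duality places $e_i x e_i$ in $H^p(A_i)^{\perp\perp} = H^p(A_i) \subset H^p(A)$, and since $e_i x e_i \to x$ in $p$-norm and $H^p(A)$ is closed, $x \in H^p(A)$. The dual identity $H^p_0(A) = H^q(A)^\perp$ is proved identically.

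For statement (2), only the inclusion $\mathscr{H}^q(A) \cap \mathscr{L}^p(M) \subset \mathscr{H}^p(A)$ is nontrivial (the other is immediate from $\mathscr{H}^p(A) \subset \mathscr{L}^p(M)$; note that the symbol $\mathscr{L}^p(A)$ appearing in the statement must be read as $\mathscr{L}^p(M)$). Given $x$ in the left-hand side, write $x = \iota^{(q)}(y) = \iota^{(p)}(z)$ with $y \in H^q(A)$ and $z \in L^p(M)$. By the commutation of compressions with the embeddings,
\[ e_i x e_i = \iota^{(q)}(e_i y e_i) = \iota^{(p)}(e_i z e_i), \]
with $e_i y e_i \in H^q(A_i)$ (Theorem \ref{decomp}) and $e_i z e_i \in L^p(M_i)$. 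Applying the $\sigma$-finite version of (2) inside $M_i$ yields $e_i z e_i \in H^p(A_i) \subset H^p(A)$; since $e_i z e_i \to z$ in $p$-norm and $H^p(A)$ is norm-closed, $z \in H^p(A)$, and thus $x = \iota^{(p)}(z) \in \mathscr{H}^p(A)$. The $H^q_0$ variant is identical. The main obstacle is not conceptual but bookkeeping: one must verify that the identity $e_i \iota^{(p)}(\xi) e_i = \iota^{(p)}(e_i \xi e_i)$ genuinely holds uniformly in $p \in [1,\infty]$ despite $L^{1+\infty}(M)$ being only a quasi-Banach space, and pin down a clean reference for the $\sigma$-finite form of (2) inside $L^{1+\infty}(M_i)$; everything else is a direct application of Theorem \ref{decomp}.
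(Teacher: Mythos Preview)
Your argument for part (1) is correct and essentially identical to the paper's: both prove the nontrivial inclusion $H^q_0(A)^\perp \subset H^p(A)$ by compressing an annihilator $\xi$ to $e_i\xi e_i$, invoking the $\sigma$-finite duality (the paper cites \cite[Corollary 3.4]{JiAnalytic}) to place $e_i\xi e_i$ in $H^p(A_i)$, and passing to the $p$-norm limit. The paper omits the easy inclusion you spell out, but the content is the same.

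For part (2) your route differs from the paper's. You again reduce to the $\sigma$-finite case by compressing inside $L^{1+\infty}(M)$ and invoking a $\sigma$-finite version of (2); this is coherent with the approximation philosophy of Theorem~\ref{decomp}, but as you yourself flag, it leaves two bookkeeping debts: a clean reference for the $\sigma$-finite statement formulated inside $L^{1+\infty}(M_i)$, and a verification that $e_i\,\iota^{(p)}(\xi)\,e_i = \iota^{(p)}(e_i\xi e_i)$ for all $p$. The paper instead simply asserts that Bekjan's argument from \cite[Proposition 3.2(ii)]{Bek} carries over verbatim to the present setting; that argument works directly from the duality in (1) and H\"older-type considerations, so it needs neither the $\sigma$-finite case of (2) nor the commutation of $e_i$ with $\iota^{(p)}$. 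Your approach is not wrong, but the paper's is cleaner here precisely because it avoids the superspace machinery you would have to check.
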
 
 
 \begin{proof}   Let $\xi \in L^p(M)$ with $\xi  \in H^q_0(A)^\perp$.   So using the duality above, $$\langle \xi , e_i \eta e_i \rangle = \langle e_i \xi e_i , e_i \eta e_i \rangle = 0, \qquad 
 \eta  \in H^q_0(A_i) .$$  By  the $\sigma$-finite case \cite[Corollary 3.4]{JiAnalytic}  we have $e_i \xi e_i \in H^q(A_i)$ so that $\xi = \lim_i \, e_i \xi e_i \in H^q(A)$.  The second 
assertion of (1) is similar.  

The proof of the second set of assertions  is as in \cite[Proposition 3.2 (ii)]{Bek}. \end{proof}  
 
  \begin{corollary}  \label{prs} If $A$ is a  semi-$\sigma$-finite subdiagonal algebra
 then $H^p(A) H^r(A) \subset H^s(A)$ if $1/s = 1/p + 1/r$ and $p,r,s \geq 1$.   \end{corollary}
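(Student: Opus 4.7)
The plan is to reduce immediately to the $\sigma$-finite case by using Theorem \ref{decomp}, and then take a Hölder limit. Let $x \in H^p(A)$ and $y \in H^r(A)$. First I would observe that the Hölder inequality for Haagerup $L^p$-spaces (a standard fact in the background material already cited) guarantees that the product $xy$ lies in $L^s(M)$ and that multiplication is jointly continuous from $L^p(M) \times L^r(M)$ into $L^s(M)$. The task is therefore to identify $xy$ as an element of the closed subspace $H^s(A)$ of $L^s(M)$.

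Next I would invoke Theorem \ref{decomp}: the cut-downs $x_i = e_i x e_i$ and $y_i = e_i y e_i$ lie in $e_i H^p(A) e_i = H^p(A_i)$ and $e_i H^r(A) e_i = H^r(A_i)$ respectively, where each $A_i$ is a maximal subdiagonal algebra in the $\sigma$-finite von Neumann algebra $M_i = e_i M e_i$. Moreover $x_i \to x$ in $p$-norm and $y_i \to y$ in $r$-norm. By the $\sigma$-finite case of the corollary, which is already in the literature (see e.g.\ \cite{JiAnalytic} or \cite{Bek}), we have $H^p(A_i) H^r(A_i) \subset H^s(A_i)$, so $x_i y_i \in H^s(A_i) \subset H^s(A)$.

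Finally, the triangle inequality together with Hölder gives
\[
\|x_i y_i - xy\|_s \leq \|x_i\|_p \, \|y_i - y\|_r + \|x_i - x\|_p \, \|y\|_r \to 0,
\]
so $x_i y_i \to xy$ in $s$-norm. Since each $x_i y_i$ lies in the closed subspace $H^s(A)$ of $L^s(M)$, we conclude $xy \in H^s(A)$, as required.

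The main potential obstacle is not the limiting argument, which is routine, but rather citing the $\sigma$-finite case of Hölder multiplication for $H^p$-spaces in the correct form; this is where the reliance on maximality of $A_i$ in $M_i$ (established in Theorem \ref{isma}) is essential, since it is in that generality that the product rule is available in the published $\sigma$-finite subdiagonal literature.
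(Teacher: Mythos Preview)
Your argument is essentially the same as the paper's for the case $p,r<\infty$, and correct there. However, you have not addressed the case where one of the exponents is infinite. When, say, $p=\infty$, Theorem \ref{decomp} only gives $e_i x e_i \to x$ in the SOT, not in the $\infty$-norm; consequently your displayed Hölder estimate
\[
\|x_i y_i - xy\|_s \leq \|x_i\|_p \, \|y_i - y\|_r + \|x_i - x\|_p \, \|y\|_r
\]
no longer forces convergence, since $\|x_i - x\|_\infty$ need not tend to zero. The paper treats this case separately: with $x\in A=H^\infty(A)$ and $y_i = e_i y e_i \to y$ in $r$-norm, one instead argues that $e_j x y_i \to e_j xy$ in $r$-norm for each fixed $j$, deduces $e_j xy \in H^r(A)$, and then lets $j$ vary. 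You should either supply this endpoint argument or explicitly restrict your proof to $p,r<\infty$ and handle the remaining case by a different device.
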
 

\begin{proof} 
 Let $x \in H^p(A) , y \in H^r(A)$.    Then $xy \in L^s(M)$ by e.g.\ \cite[Proposition 7.24]{GLa1}. 
 Suppose that $p, r < \infty$.
Let $x_i = e_i x e_i \to x$ in $p$-norm, and $y_i = e_i y e_i \to y$ in $r$-norm. 
Since $e_i x e_i \in H^p(A_i) \subset L^p(M_i), e_i y e_i \in H^r(A_i) \subset L^r(M_i
),$ and $A_i$ is maximal subdiagonal in  $\sigma$-finite $M_i$, we have 
$x_i y_i \in H^s(A)$ with norm there $\leq \| x_i   \| \| y_i \| \leq
K$.    Moreover $$\| x_i y_i - xy \|_s \leq  
\| x_i (y_i - y) \|_s + \|( x_i - x)y \|_s  
\leq \| x_i \|_p \| y_i - r \|_r + \| x_i - x \|_p \| y \|_s \to 0.$$ 
(That $x_i y_i \in H^s(A)$ is clear from such arguments and the density of $A_i$ in $H^p(A_i)$ for any $p \geq 1$.)  
Thus $xy$ is in $H^s(A)$ since the latter is closed in $L^p(M)$.  
Suppose that  $p = \infty$ and $r < \infty$, with  $y_i = e_i y e_i \to y$  in $r$-norm.  Then $e_j x y_i \to e_j xy$  in $r$-norm for each $j$.
  So $e_j xy \in H^r(A)$ from which it is easy to see
 that  $xy \in H^r(A)$. 
 \end{proof} 

In general the converse of the last result, Riesz factorization, is not even known for maximal subdiagonal algebras in the $\sigma$-finite case.
However if the $A_i$ do have Riesz factorization then so will $A$ by a weakly converging subnet argument, at least  for finite  $p,r > 1$.

 \begin{corollary}  \label{JiH}   {\rm (The Hilbert transform) \ }  Let $A$ be a semi-$\sigma$-finite subdiagonal algebra. 
For $1 < p < \infty$ there exists a 
bounded linear map $H : L^p(M) \to L^p(M)$ such that $x + iHx \in H^p(A)$ for all $x \in L^p(M)$.    This is the unique bounded extension of the map  $\mathfrak{i}^{(p)}(a + d + b^*) \mapsto i 
\, \mathfrak{i}^{(p)}(b^* - a)$ for $a, b \in A_0\cap\mathfrak{m}_\omega^M, d \in D\cap\mathfrak{m}_\omega^D$. \end{corollary}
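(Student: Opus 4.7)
The plan is to lift the known noncommutative Hilbert transform theorem for maximal $\sigma$-finite subdiagonal algebras (see e.g.\ \cite{JiAnalytic}) to the semi-$\sigma$-finite setting by using the approximation machinery of Theorem~\ref{decomp}. For each index $i$, applying that theorem to the maximal $\sigma$-finite subdiagonal algebra $A_i \subset M_i$ yields a bounded linear map $H_i : L^p(M_i) \to L^p(M_i)$ satisfying $\xi + iH_i\xi \in H^p(A_i)$ for all $\xi \in L^p(M_i)$, and $H_i$ is uniquely characterized on the dense subspace $\mathfrak{i}^{(p)}((A_i)_0 + D_i + (A_i)_0^*) \cap \mathfrak{i}^{(p)}(\mathfrak{m}^{M_i}_\omega)$ by $\mathfrak{i}^{(p)}(a + d + b^*) \mapsto i\mathfrak{i}^{(p)}(b^* - a)$.

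I would next check \emph{compatibility}: whenever $e_i \leq e_j$ the operator $H_j$ restricts to $H_i$ on $L^p(M_i) \subset L^p(M_j)$. On the explicit dense subspace this is immediate because $e_i A_0 e_i \subset (A_j)_0$, so the defining formula returns the same element. By boundedness, this extends to equality on all of $L^p(M_i)$. Consequently the $H_i$ fit together into a well-defined linear map on the directed union $\bigcup_i L^p(M_i)$, which is dense in $L^p(M)$ by Theorem~\ref{decomp}. If the bounds $\|H_i\|$ are uniform in $i$, say $\|H_i\| \leq C_p$, then the patched map extends by continuity to a bounded $H : L^p(M) \to L^p(M)$ with $\|H\| \leq C_p$.

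To verify the characterizing identity on all of $L^p(M)$, approximate $x \in L^p(M)$ by $x_i = e_i x e_i$ in $p$-norm (Theorem~\ref{decomp}); since $H_i x_i = H x_i$, we get $x_i + iHx_i \in H^p(A_i) \subset H^p(A)$, and taking $p$-norm limits using the closedness of $H^p(A)$ in $L^p(M)$ delivers $x + iHx \in H^p(A)$. Uniqueness of the bounded extension is automatic from density of the subspace on which the explicit formula is imposed.

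The main obstacle is producing a norm bound $\|H_i\| \leq C_p$ that is \emph{independent of $i$}. Fortunately, the noncommutative Riesz projection result in the $\sigma$-finite setting gives a constant depending only on $p$ (and not on the particular subdiagonal algebra), so uniformity comes for free once the appropriate $\sigma$-finite theorem is invoked. A secondary technical point is confirming that $\mathfrak{i}^{(p)}\bigl((A_0 + D + A_0^*) \cap \mathfrak{m}_\omega^M\bigr)$ is $p$-norm dense in $L^p(M)$, but this reduces via Theorem~\ref{decomp} to the analogous density in each $L^p(M_i)$, which is standard in the $\sigma$-finite theory.
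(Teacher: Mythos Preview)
Your proposal is correct and follows essentially the same route as the paper: invoke the $\sigma$-finite Hilbert transform of \cite{JiAnalytic} on each $M_i$, use the explicit formula (equivalently, uniqueness) to see the $H_i$ are compatible, extend to the dense union and then to $L^p(M)$, and verify $x+iHx\in H^p(A)$ via the approximation $e_ixe_i\to x$ from Theorem~\ref{decomp}. You are slightly more explicit than the paper about the need for a uniform bound $\|H_i\|\le C_p$ independent of $i$, which the paper leaves implicit; your observation that the $\sigma$-finite Riesz-projection constant depends only on $p$ is exactly what is needed there.
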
 

\begin{proof}  Indeed by \cite{JiAnalytic} we have unique 
norm bounded $H_i : L^p(M_i) \to L^p(M_i)$ such that $x + i H_i x \in H^p(A_i)$ for  all $x \in L^p(M_i)$, with $H_i$ extending the map $\mathfrak{i}^{(p)}(a + d + b^*) \mapsto i\mathfrak{i}^{(p)}(b^* - a)$ for $a, b \in (A_i)_0, d \in D_i$.    
By the uniqueness the $H_i$ are `compatible', hence they simultaneously extend to a bounded 
$H: L^p(M) \to L^p(M)$.  For $x \in L^p(M)$ we have   $x + i H x  = \lim_i \, e_i x e_i + i H_i(e_i x e_i) \in H^p(A)$.
Note that for $a, b \in A_0\cap\mathfrak{m}_\omega^M, d \in D\cap\mathfrak{m}_\omega^D$ $$H(\mathfrak{i}^{(p)}(a + d + b^*))  = \lim_i \, H(\mathfrak{i}^{(p)}(e_i a e_i + e_i d e_i + e_i b^* e_i)) = \lim_i \,  i \mathfrak{i}^{(p)}(e_i b^* e_i - e_i a e_i) =  i(b^* - a).$$
So by density $H$ is the unique bounded extension of the map  $a + d + b^* \mapsto i(b^* - a)$ for $a, b \in A_0\cap\mathfrak{m}_\omega^M, d \in D\cap\mathfrak{m}_\omega^D$. 
 \end{proof}

\begin{corollary}  \label{JiA3.3} Let $A$ be a semi-$\sigma$-finite subdiagonal algebra.  
For $1 < p < \infty$, $L^p(M) = H_0^p \oplus L^p(D) \oplus (H_0^p)^*$. 
\end{corollary}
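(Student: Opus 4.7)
The plan is to build explicit projections onto each summand using the Hilbert transform $H$ from Corollary \ref{JiH} together with the expectation $E_p$, and then to settle directness by compressing with the $e_i$ and invoking the $\sigma$-finite case.

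The key preliminary is that $H$ maps $L^p(M)$ into $\mathrm{Ker}(E_p)$. On the dense subspace $\mathfrak{i}^{(p)}(A_0 \cap \mathfrak{m}_\omega^M + D \cap \mathfrak{m}_\omega^D + A_0^* \cap \mathfrak{m}_\omega^M)$, the defining formula $H(\mathfrak{i}^{(p)}(a+d+b^*)) = i\mathfrak{i}^{(p)}(b^* - a)$ lands in $\mathrm{Ker}(E_p)$ since $E(a)=E(b)=0$, and boundedness of $H$ and $E_p$ extends this to all of $L^p(M)$. On the same dense set one computes $x + iH(x) = \mathfrak{i}^{(p)}(2a + d) \in \mathfrak{i}^{(p)}(A)$ and $x - iH(x) = \mathfrak{i}^{(p)}(d + 2b^*) \in \mathfrak{i}^{(p)}(A^*)$, so by density $x + iH(x) \in H^p(A)$ and $x - iH(x) \in H^p(A)^*$ for every $x \in L^p(M)$.

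For existence, given $x \in L^p(M)$ I would set
\[
x_+ = \tfrac12\bigl(x + iH(x) - E_p(x)\bigr), \quad x_0 = E_p(x), \quad x_- = \tfrac12\bigl(x - iH(x) - E_p(x)\bigr),
\]
so that $x = x_+ + x_0 + x_-$. Then $x_+ \in H^p(A)$ since $x + iH(x)$ and $E_p(x) \in L^p(D)$ both lie there (Corollary \ref{Bek3.2}), and $E_p(x_+) = \tfrac12(E_p(x) + 0 - E_p(x)) = 0$, placing $x_+$ in $H^p(A) \cap \mathrm{Ker}(E_p) = H_0^p(A)$. The symmetric computation puts $x_- \in H_0^p(A)^*$, while $x_0 \in L^p(D)$ by construction.

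For directness, suppose $a + d + b^* = 0$ with $a, b \in H_0^p(A)$ and $d \in L^p(D)$. Applying $E_p$ annihilates $a$ and $b^*$, forcing $d = 0$ and hence $a = -b^* \in H_0^p(A) \cap H_0^p(A)^*$. Compressing, Theorem \ref{decomp} together with continuity of the involution gives $e_i a e_i \in H_0^p(A_i) \cap H_0^p(A_i)^*$, and in the $\sigma$-finite setting $A_i \subset M_i$ this intersection is $\{0\}$ as an immediate consequence of Ji's analogous decomposition theorem in \cite{JiAnalytic}. Since $e_i a e_i \to a$ in $p$-norm, $a = 0$. The main obstacle is precisely this last triviality of $H_0^p(A) \cap H_0^p(A)^*$: the compression step rests on the compatibility of the $\sigma$-finite Hardy spaces under $e_i \cdot e_i$ supplied by Theorem \ref{decomp}, without which the reduction to the known $\sigma$-finite result would not be available.
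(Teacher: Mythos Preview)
Your proof is correct and is essentially an explicit unpacking of the paper's one-line argument: the paper simply says the result follows ``just as in Corollary~3.3 of \cite{JiAnalytic}'' (which is exactly the Hilbert-transform construction of the three projections that you carry out), and then gives the same compression argument via Theorem~\ref{decomp} to show $H^p_0(A)\cap H^p_0(A)^*=(0)$. In other words, you have written out in full the proof the paper only sketches by reference.
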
 

This is just as in Corollary 3.3 in \cite{JiAnalytic} if $p >1$.    Note that e.g.\ $H_0^p \cap (H_0^p)^* = (0)$ for $p \geq 1$, since if $x \in H_0^p \cap (H_0^p)^*$ then
$e_i x e_i \in H_0^p(A_i) \cap (H_0^p(A_i))^* = (0)$, so that $x = 0$. 

For $p = 1$ we have $L^1$-density: the $1$-closure of $H^1(A) + H^1(A)^*$ is $L^1(M)$ since it contains the unions of the $[\mathfrak{i}^{(1)}(A_i + A_i^*)]_1 = L_1(M_i)$.  In the last line we used 
$L^1$-density in the $\sigma$-finite case. 
(A proof of 
the latter: if $x \in M_i$ annihilates $\mathfrak{i}^{(1)}(A_i + A_i^*)$, then by \cite[Proposition 2.10]{GL2} 
$\mathfrak{i}^{(1)}(x)$ annihilates $A_i + A_i^*$, and hence also $M_i$ by weak* density. So $\mathfrak{i}^{(1)}(x) = 0$ whence 
$x = 0$.)
We have $H^1(A) = H^1_0(A) \oplus L^1(D)$.  Indeed suppose that  $x \in [A_0]_1 \cap [D]_1$ and $x_i = e_i x e_i \to x$  in the 1 norm. Our earlier analysis then shows that $(x_i)\subset 
[(A_i)_0]_1 \cap [D_i]_1$.
Then $0 = E(x_i) = x_i = E_p(x_i)$, whence $x = 0$.  So  $[A_0]_1 \cap [D]_1 = (0)$.

\section{A Beurling theory for semi-$\sigma$-finite subdiagonal subalgebras} \label{Beu} 

In the present context we define a {\em (right) $A$-invariant subspace} of $L^p(M)$, to be a 
closed subspace $K$ of $L^p(M)$ such that $K A \subset K$. For consistency, we will not 
consider left invariant subspaces at all, leaving the reader to verify that entirely symmetric 
results pertain in the left invariant case. Invariant subspaces may be classified in 
accordance with their structure. In this regard we say that an invariant subspace $K$ is 
{\em simply invariant} if in addition the closure of $K A_0$ is properly contained in $K$.

Given a right $A$-invariant subspace $K$ of $L^2(M)$, we define the {\em right wandering subspace} 
of $K$ to be the space $W = K \ominus [K A_0]_2$, and then say that $K$ is {\em type 1} if
$W$ generates $K$ as an $A$-module (that is, $K = [W A]_2$), and {\em type 2} if $W = (0)$.

The development of especially the $L^p$-version of the theory of closed right $A$-invariant subspaces, makes deep use of the concept of a `column $L^p$-sum' as introduced in \cite{JS}. Given $1 \leq p < \infty$ and a collection $\{ X_i : i \in I \}$ of closed subspaces of $L^p(M)$, the {\em external} column $L^p$-sum $\oplus^{col}_i \, X_i$ is defined to be the
closure of the restricted algebraic sum in the norm
$\Vert (x_i) \Vert_p \overset{def}{=} 
tr((\sum_i \, x_i^* x_i)^{\frac{p}{2}})^{\frac{1}{p}}$.
That this is a norm for $1 \leq p < \infty$ is verified in \cite{JiSaito}. If $X$ is a subspace of $L^p(M)$, and if 
$\{ X_i : i \in I \}$ is a collection of  subspaces of $X$, which together densely span $X$, with the added property that 
$X_i^* X_j = \{ 0 \}$ if $i \neq j$, then we say that $X$ is the {\em internal} column $L^p$-sum $\oplus^{col}_i \, X_i$. 
We shall not need the concept of an external column sum. So wherever column sum is mentioned below, it shall refer to an 
internal column sum. Note that  if $J$ is a finite subset of $I$, and if $x_i \in X_i\subset L^p$ for all $i \in J$, then we have that $$tr(|\sum_{i \in J} \, x_i|^p)^{1/p} = tr((|\sum_{i \in J} \, x_i|^2)^{\frac{p}{2}})^{1/p}
= tr((\sum_{i \in J} \, x_i^* x_i)^{\frac{p}{2}})^{1/p}.$$This shows that $X$ is then isometrically isomorphic to the external column $L^p$-sum $\oplus^{col}_i \, X_i$. Since the projections onto the summands are clearly
contractive, it follows by routine arguments (or by \cite[Lemma 2.4]{JiSaito}) that if $(x_i)\in \oplus^{col}_i \, X_i$, then the net $(\sum_{j \in J} \, x_j)$, indexed by the finite subsets $J$ of $I$, converges in norm to $(x_i)$.

The first cycle of results we present are extensions of corresponding results in \S 2 of \cite{LL}. 
The first result in this regard is basically a restatement of \cite[Theorem 2.4]{LL}. The exact same 
proof offered in \cite{LL} goes through in the general setting and hence we forgo the proof. 

\begin{theorem} \label{inv1}
Let $\A$ be an analytically conditioned algebra in $M$.    \begin{itemize}
\item [(1)]   Suppose that $X$ is a subspace of $L^2(M)$
of the form $X = Z \oplus^{col} [YA]_2$ where $Z, Y$ are closed
subspaces of $X$, with $Z$
a type 2 invariant subspace,
and $\{y^*x : y, x \in Y \} = Y^*Y \subset L^1(\D)$.  Then
$X$ is simply right $\A$-invariant if and only if $Y \neq
\{0\}$.
\item [(2)]  If $X$ is as in {\rm (1)},
then $[Y {D}]_2 = X \ominus [XA_0]_2$ (and
$X = [XA_0]_2 \oplus [Y {D}]_2$).
\item [(3)]   If $X$ is as described in {\rm (1)},
then that description also holds if $Y$ is replaced by $[Y \D]_2$.  Thus
(after making this replacement)
we may assume that $Y$ is a ${D}$-submodule of $X$.

\item [(4)]   The subspaces $[Y {D}]_2$ and $Z$ in the decomposition
in  {\rm (1)} are uniquely determined by $X$.  So is $Y$ if we
take it to be a ${D}$-submodule (see {\rm (3)}).
\item [(5)]  If $A$ is maximal subdiagonal, then any right $\A$-invariant subspace
$X$ of
$L^2(M)$ is of the form described in {\rm (1)},
with $Y$ the right wandering subspace of $X$.
\end{itemize}
\end{theorem}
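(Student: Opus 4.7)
The plan is to adapt the proof of \cite[Theorem 2.4]{LL} verbatim, verifying that the only ingredients used are: the $\omega$-preserving multiplicative conditional expectation $E : M \to D$ with $E(A_0) = 0$ (which is built into the definition of analytically conditioned), the column $L^2$-sum formalism of \cite{JiSaito,JS}, and the Haagerup $L^p$-theory summarized in Section 1. All of these are available in the present generality. Parts (1)--(4) use neither finite-trace nor maximality hypotheses; only (5) invokes maximal subdiagonality.

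The core computation is the orthogonality $[YD]_2 \perp [XA_0]_2$. For $y_1, y_2 \in Y$, $d \in D$, and $a \in A_0$ the $L^2$-inner product
\[
\langle y_1 d, y_2 a\rangle = \mathrm{tr}(a^* y_2^* y_1 d)
\]
vanishes, because $y_2^* y_1 \in L^1(D)$ by hypothesis, so $a^* y_2^* y_1 d \in L^1(M)$ has conditional expectation $E_1(a^*) y_2^* y_1 d = 0$. Combined with the column-sum orthogonality $Z \perp [YA]_2$ and the fact that $Z = [ZA_0]_2 \subset [XA_0]_2$ (because $Z$ is type 2), this identifies $X \ominus [XA_0]_2$ with $[YD]_2$, proving (2). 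Statement (1) is then immediate, since $[YD]_2 \neq \{0\}$ iff $Y \neq \{0\}$ (using $1 \in D$). Part (3) is the observation that replacing $Y$ by $Y' = [YD]_2$ preserves all hypotheses, since $(Y')^* Y' \subset L^1(D)$ and $[Y'A]_2 = [YA]_2$. Uniqueness in (4) follows intrinsically: (2) pins down $[YD]_2$ as $X \ominus [XA_0]_2$, after which $Z$ is determined as its orthogonal complement in $X$.

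Part (5) is where the real work lies, and is the step I expect to be the main obstacle. Given an arbitrary right $A$-invariant $X$, one sets $W = X \ominus [XA_0]_2$ and takes $Z$ to be the type 2 piece produced by the limit construction of \cite{LL}; that $Z$ is type 2 is routine, since its would-be wandering subspace lies simultaneously in $W$ and in $[WA]_2^\perp$, hence is zero. The substantive point is to prove $W^*W \subset L^1(D)$, and it is here that maximal subdiagonality must be used. The \cite{LL} argument combines a Jensen-type inequality with the multiplicativity and maximality of $E$ to show that for $w_1, w_2 \in W$ the element $w_1^* w_2 \in L^1(M)$ annihilates $A_0$ on both sides in trace duality, and then appeals to the characterization of $L^1(D)$ as the $L^1$-annihilator of that span. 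In our semi-$\sigma$-finite setting this characterization is available either directly by the same Jensen argument, or alternatively by reducing to the $\sigma$-finite case via the compressions $e_i W e_i$ furnished by Theorem \ref{isma}(3) and invoking the $\sigma$-finite Beurling theorem of \cite{JiAnalytic} on each piece before passing to the inductive limit.
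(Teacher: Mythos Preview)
Your proposal is correct and matches the paper's approach exactly: the paper itself forgoes the proof entirely, stating only that ``the exact same proof offered in \cite{LL} goes through in the general setting,'' which is precisely what you outline and justify. One small caveat: your alternative route for (5) via the compressions $e_i$ from Theorem~\ref{isma}(3) would restrict the statement to the semi-$\sigma$-finite case, whereas the theorem as stated applies to any analytically conditioned (and for (5), maximal subdiagonal) algebra; so the direct adaptation of the \cite{LL} argument is the one that actually proves the theorem in its stated generality.
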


Building on Theorem \ref{inv1}, we are now able to present the following rather elegant decomposition of the 
right wandering subspace. This extends \cite[Proposition 2.5]{LL}. The proof of the general case is quite 
a bit more tricky than that of the $\sigma$-finite case, and hence full details need to be provided. 

\begin{theorem} \label{newpr}  Let $A$ be an  analytically conditioned algebra in $M$. Suppose that $X$ is
as in Theorem {\rm
\ref{inv1}}, and that $W$ is the right wandering subspace of $X$.
Then $W$ may be decomposed as
an orthogonal direct sum $\oplus^{col}_i \, u_i L^2({D})$,
where $u_i$ are partial isometries in $\M$ for which 
$u_i(\frac{d\widetilde{\omega}}{d\tau_L})^{1/2}a\in W$ for each $a\in  A \cap \mathfrak{n}_\omega$, with
$u_i^* u_i \in {D}$, and $u_j^* u_i = 0$ if
$i \neq j$.   If $W$ has a cyclic vector for the ${D}$-action,
then we need only one partial isometry in the above.
\end{theorem}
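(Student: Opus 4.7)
The plan is to mimic the $\sigma$-finite argument of \cite[Proposition 2.5]{LL}, but with the polar decomposition carried out in the Haagerup $L^2(M)$ and with substantial extra care when forming products with $(d\widetilde{\omega}/d\tau_L)^{1/2}=h^{1/2}$. The backbone is a Zorn's lemma argument whose nonemptiness is supplied by polar decomposition, and whose maximality is forced to be everything via another polar decomposition in the orthogonal complement.

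First, I would establish the basic construction. By Theorem~\ref{inv1}(3) we may take $Y = [YD]_2$, so $W$ is a $D$-submodule of $X$ and $W^*W \subset L^1(D)$. Given any nonzero $w \in W$, the element $|w| = (w^*w)^{1/2}$ lies in $L^2(D)_+$. The $L^2$-polar decomposition gives $w = u|w|$ with $u \in M$ a partial isometry and $u^*u$ equal to the support projection $s(|w|)$ in $D$. Approximating $s(|w|)$ by $d_n|w|$ where $d_n \in D \cap M_\omega$ are bounded spectral cut-offs of $|w|^{-1}$, one has $ud = \lim_n w(d_n d)$ in $L^2$-norm for each $d \in L^2(D)$; since $W$ is a closed $D$-submodule, this gives $uL^2(D) \subset W$. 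The analogous approximation $u\cdot h^{1/2}a = \lim_n w\cdot d_n h^{1/2}a = \lim_n w\cdot h^{1/2}(\sigma^\omega_{-i/2}(d_n))a$ (after the usual analytic extension, combined with $\sigma^\omega_t$-invariance of $D$) yields $u\cdot h^{1/2}a \in W$ for $a \in A \cap \mathfrak{n}_\omega$, using that $W$ is a $D$-submodule closed in $L^2$-norm.

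Second, I would Zornify. Let $\mathcal{F}$ be the family of sets $\{u_\alpha\}$ of partial isometries in $M$ satisfying (i) $u_\alpha^*u_\alpha \in D$, (ii) $u_\beta^*u_\alpha = 0$ for $\alpha \neq \beta$, (iii) $u_\alpha L^2(D) \subset W$, and (iv) $u_\alpha h^{1/2}a \in W$ for all $a \in A \cap \mathfrak{n}_\omega$. The collection is nonempty by the preceding paragraph, is partially ordered by inclusion, and every chain has its union as an upper bound, so Zorn yields a maximal family $\{u_i\}_{i \in I}$. The orthogonality condition (ii) and (iii) force $\{u_i L^2(D)\}$ to be mutually $L^2$-orthogonal (since for $i\neq j$ and $d_i,d_j \in L^2(D)$ one has $(u_id_i)^*(u_jd_j)=d_i^*(u_i^*u_j)d_j=0$), so the sum
\[
N := \overline{\sum_i u_i L^2(D)} = \bigoplus^{col}_i u_i L^2(D) \subset W
\]
is an internal column $L^2$-sum.

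Third, I would derive $N = W$ by contradiction. If not, pick nonzero $w' \in W \ominus N$. The preceding polar decomposition construction produces a partial isometry $v \in M$ with $v^*v \in D$, $vL^2(D) \subset W$, and $v h^{1/2}a \in W$ for $a \in A \cap \mathfrak{n}_\omega$. Since $w' \perp u_iL^2(D)$ for every $i$ and $v$ lies in the closed $D$-span of $w'$ (again via the $d_n$-approximation), every $vd \in L^2$ is orthogonal to every $u_i d'$, which forces $u_i^* v = 0$ for every $i$ by the trace-duality identity $\mathrm{tr}(d'^* (u_i^* v) d) = 0$ for all $d,d' \in L^2(D) \cap \mathfrak{m}_\omega$ (density in $L^2(D)$). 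Thus $\{u_i\} \cup \{v\}$ lies in $\mathcal{F}$, contradicting maximality. Hence $W = \bigoplus^{col}_i u_i L^2(D)$.

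For the cyclic case, if $w_0 \in W$ has $[w_0 D]_2 = W$, then the polar decomposition $w_0 = v|w_0|$ already produces $vL^2(D) \supset \overline{w_0 D} = W$ (once one checks that the approximation argument shows $w_0d \in vL^2(D)$ for every $d$, combined with density of $\mathfrak{m}_\omega^D$ in $L^2(D)$). Thus a single partial isometry suffices.

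The main obstacle is the closure step guaranteeing $vh^{1/2}a \in W$: because $h$ is generally unbounded and not $\tau$-measurable when $\omega$ is infinite, one cannot simply manipulate $h^{1/2}$ as an element of $L^2$, and the approximation $v = \lim w d_n$ must be transported carefully across the embedding $\mathfrak{i}^{(2)}$ using the modular invariance of $D$ and the fact that $d_n$ are analytic for $\sigma^\omega$. Everything else is book-keeping once this closure is in hand.
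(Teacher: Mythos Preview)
Your overall strategy---polar-decompose a single vector $w\in W$ to produce a partial isometry $u\in M$ with $u^*u\in D$, then run Zorn on orthogonal families---is sound and genuinely different from the paper's route. The paper instead invokes the Junge--Sherman classification of normal Hilbert $D$-modules to split $W$ into cyclic pieces $K\cong eL^2(D)$ via a unitary $D$-module map $\psi$, and then manufactures each $u_e\in M$ as a weak* cluster point of the partial isometries $u_\lambda$ appearing in the polar decompositions $g_\lambda=u_\lambda|g_\lambda|$ of $g_\lambda=\psi(e[f_\lambda h^{1/2}])$, where $(f_\lambda)\subset\mathfrak{n}(D)_\omega^*\cap\mathfrak{n}(D)_\omega$ is a net converging strongly to $\I$. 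Your approach is more elementary (no module classification, no limiting net); the paper's makes the cyclic structure explicit and avoids Zorn.

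There is, however, a genuine gap in your approximation step. You assert that the bounded spectral cut-offs $d_n$ of $|w|^{-1}$ lie in $D\cap M_\omega$. In the Haagerup picture $|w|\in L^2(D)$ is affiliated to $D\rtimes_\omega\mathbb{R}$, not to $D$: since $\theta_s(|w|)=e^{-s/2}|w|$, one has $\theta_s\bigl(|w|^{-1}\chi_{[1/n,n]}(|w|)\bigr)=e^{s/2}|w|^{-1}\chi_{[e^{s/2}/n,\,e^{s/2}n]}(|w|)\neq d_n$, so $d_n\notin D$. Consequently $wd_n$ need not lie in the $D$-module $W$, and neither the limit $ud=\lim_n w(d_nd)$ nor the analytic-continuation move $w\cdot h^{1/2}\sigma^\omega_{-i/2}(d_n)a$ is available ($\sigma^\omega$ does not even act on $d_n$). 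More bluntly, $d_n|w|\in L^2(D)$ can never approximate the projection $s(|w|)\in D$, since nonzero projections do not lie in Haagerup $L^2$. The conclusion $uL^2(D)\subset W$ is still correct, but needs a different justification: because $L^2(D)$ is the standard form of $D$, every closed right $D$-submodule of $L^2(D)$ equals $pL^2(D)$ for a projection $p\in D$, and one checks $[|w|D]_2=eL^2(D)$ with $e=u^*u$; hence $uL^2(D)=u[|w|D]_2=[wD]_2\subset W$. With this repair your Zorn argument goes through. The paper's detour through the net $(f_\lambda)$ is precisely what replaces your unavailable $d_n$.
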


\begin{proof}   By the theory of representations of a
von Neumann algebra (see e.g.\ the discussion at the
start of Section 3 in
\cite{JS}), any normal Hilbert
${D}$-module is an $L^2$ direct sum of cyclic
Hilbert ${D}$-modules,
and if $K$ is a normal
cyclic Hilbert ${D}$-module, then
$K$ is spatially isomorphic to $eL^2({D})$, for
an orthogonal projection $e \in {D}$.

Suppose that the latter isomorphism is implemented by a unitary ${D}$-module map $\psi$. Let $(f_\lambda)$ be a 
net in $\mathfrak{n}(D)_\omega^*\cap \mathfrak{n}(D)_\omega$ converging strongly to $\I$ (in a more general situation this is guaranteed by Lemma 9 in  \cite{Terp2}).  If in addition 
$K \subset W$, we will then have that $g_\lambda = \psi(e[f_\lambda h^{1/2}]) \in W$ for each $\lambda$, where 
$h=\frac{d\widetilde{\omega}}{d\tau_L}$. Then $$tr(d^* g_\lambda^* g_\lambda d) = \Vert \psi(e[f_\lambda h^{1/2}] d) \Vert_2^2 = tr(d^*(h^{1/2}f_\lambda^*) e[f_\lambda h^{1/2}] d),$$ for each $d \in {D}$, and so 
$g_\lambda^*g_\lambda=(h^{1/2}f_\lambda^*) e[f_\lambda h^{1/2}]=|e[f_\lambda h^{1/2}]|^2$. Hence there exists a partial 
isometry $u_\lambda$ majorised by $e$ such that $g_\lambda=u_\lambda e[f_\lambda h^{1/2}] =u_\lambda [f_\lambda h^{1/2}]$. 
By the modular action of $\psi$ we will then have that $\psi(e[f_\lambda h^{1/2}] d)= g_\lambda d=u_\lambda [f_\lambda h^{1/2}] d$ for any $d\in \D$. Since $L^2({D})$ is the closure of 
$\{(h^{1/2}d): d\in \mathfrak{n}({D})_\omega\}$ (see \cite[Proposition 7.40 \& Theorem 7.45]{GLa1}), and since 
$\psi(e[f_\lambda h^{1/2}] d)= u_\lambda [f_\lambda h^{1/2}] d =(u_\lambda f_\lambda)(h^{1/2} d)$ for each 
$d\in \mathfrak{n}({D})_\omega^*$, it follows $\psi(ef_\lambda b)=u_\lambda f_\lambda b$ for all $b\in L^2({D})$.

When working with ${D}$, we may of course assume that ${D}$ is in standard form, in which case the 
Haagerup-Terp standard form enables us to further identify $L^2(M)$ with the underlying Hilbert space of $M$. 
But then the $\sigma$-strong* convergence of $(f_\lambda)$ to $\I$ ensures that $ef_\lambda b$ will for any 
$b\in L^2({D})$ converge in $L^2$-norm to $eb$. Since $\|eb-ef_\lambda b\|_2 =\|\psi(eb-ef_\lambda b)\|_2=\|\psi(eb)-u_\lambda f_\lambda b\|_2$, this in turn ensures that $(u_\lambda f_\lambda b)$ converges to 
$\psi(eb)$ in $L^2$-norm. Given that the net $(u_\lambda f_\lambda)$ is in the unit ball of $M$, it must admit 
a subnet $(u_\gamma f_\gamma)$ which converges to some element $u_e$ of the unit ball of $M$. For any 
$b\in L^2(D)$ the net $(u_\gamma f_\gamma b)$ will then converge to $u_eb$ in the $L^2$-weak topology. But 
$(u_\gamma f_\gamma b)$ is also a subnet of $(u_\lambda f_\lambda b)$ which converges to $\psi(eb)$, and will 
therefore itself still be $L^2$-norm convergent to $\psi(eb)$. It is therefore clear that 
$\psi(eb)=u_eb$ for each $b\in L^2(D)$ and hence that $(u_\lambda f_\lambda b)$ is for each $b\in L^2(D)$, 
$L^2$-norm convergent to $u_eb$. For any $b\in L^2(D)$, we now also have that
\begin{eqnarray*}
tr(d^*b^*ebd) &=& \|ebd\|_2^2\\
&=& \lim_\lambda\|ef_\lambda bd\|_2^2\\
&=& \lim_\lambda\|\psi(ef_\lambda bd)\|_2^2\\
&=& \lim_\lambda\|\psi(ef_\lambda b)d\|_2^2\\
&=& \lim_\lambda\|u_\lambda f_\lambda bd\|_2^2\\
&=& \|u_ebd\|_2^2\\
&=& tr(d^*b^*u_e^*u_ebd).
\end{eqnarray*}
This equality firstly ensures that $b^*eb=b^*u_e^*u_eb$ for all $b\in L^2(\D)$, which then in turn ensures that 
$u_e^*u_e=e$. It follows that $u_e$ is a partial isometry with initial projection $e$, and that 
$\psi(eL^2({D}))=u_eL^2({D})$.

Given $u_i$ and $u_j$ with $i\neq j$, we have that $u_iL^2({D}), u_jL^2({D})\subset W$. 
Hence $L^2({D})u_j^* u_iL^2({D}) \subset L^1({D})$. Since  we have that  $tr([d_1^*h^{1/2}]u_j^* u_i(h^{1/2}d_0))$ equals
$$tr(\psi(e_j(h^{1/2}d_1))^*\psi(e_i(h^{1/2}d_0)))= tr([d_1^*h^{1/2}]e_je_i(h^{1/2}d_0))=0,$$ for 
any $d_0, d_1\in \mathfrak{n}({D})_\omega$, the density of 
$\{(h^{1/2}d): d\in \mathfrak{n}({D})_\omega \}$ in $L^2(D)$ now ensures that $u_j^* u_i=0$. In the case 
where $i=j$ we of course have that $u_i^*u_i=e_i\in {D}$. Putting these facts together,
we see that $W$ is of the desired form.
\end{proof}

The first corollary of the above theorem corresponds to \cite[Corollary 2.5]{LL}. Here too the proof of 
the general case requires more delicacy than that of the $\sigma$-finite case, and hence we state the proof in full.

\begin{corollary}  Let $A$ be a semi-$\sigma$-finite subdiagonal subalgebra.  Suppose that $X$ is
as in Theorem {\rm \ref{inv1}}, and that $W$ is the right wandering subspace of $X$.
If  $X\subset \mathcal{H}^2(A)$ then $Z \perp L^2({D})$. If additionally $A$ is maximal subdiagonal, 
then the partial isometries $u_i$ described in the preceding Proposition, all belong to $A$.
\end{corollary}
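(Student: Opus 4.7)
For the first claim, since $Z$ is type~2 we have $Z = [Z A_0]_2$, and since $Z \subset X \subset \mathcal{H}^2(A) \equiv H^2(A)$ the problem reduces to showing $H^2(A)\cdot A_0 \subset H^2_0(A)$. I would verify this by compression: for $\xi \in H^2(A)$ and $a \in A_0$, the compressions $\xi_j = e_j \xi e_j$ and $a_j = e_j a e_j$ lie in $H^2(A_j)$ and $(A_j)_0$ respectively, so their product $\xi_j a_j$ lies in $H^2_0(A_j) = e_j H^2_0(A) e_j \subset H^2_0(A)$ by the corresponding result for the maximal $\sigma$-finite subdiagonal algebra $A_j$ in $M_j$. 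A $(1 - e_j)$-argument of the flavor used in the proof of Theorem~\ref{decomp} shows $\xi_j a_j \to \xi a$ in $L^2$, and closedness of $H^2_0(A)$ then forces $\xi a \in H^2_0(A)$. The orthogonality $Z \perp L^2(D)$ now follows from the decomposition $H^2(A) = H^2_0(A) \oplus L^2(D)$.

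For the second claim, the plan is to reduce to the $\sigma$-finite case by compressing $u_i$ and passing to a weak$^*$-limit. Since $e_j \in D \cap M_\omega$ commutes strongly with $h$, the element $h_j^{1/2} := e_j h^{1/2}$ serves as the GNS cyclic vector for $\omega|_{M_j}$ in the $\sigma$-finite $L^2$-theory on $M_j$. Using $u_i L^2(D) \subset W \subset X \subset H^2(A)$ together with $e_j h^{1/2} \in L^2(D)$, we get $u_i(e_j h^{1/2}) \in H^2(A)$, and bilateral compression yields
\[
(e_j u_i e_j)\, h_j^{1/2} \;=\; e_j \bigl(u_i(e_j h^{1/2})\bigr) e_j \;\in\; e_j H^2(A) e_j \;=\; H^2(A_j).
\]
Appealing to the standard $\sigma$-finite maximal subdiagonal characterization $A_j = \{a \in M_j : a h_j^{1/2} \in H^2(A_j)\}$ (essentially the content of the proof of \cite[Corollary 2.5]{LL}), we conclude $e_j u_i e_j \in A_j \subset A$. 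Since $e_j u_i e_j \to u_i$ weak$^*$ as $e_j \nearrow 1$ and $A$ is weak$^*$-closed, $u_i \in A$.

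The main technical obstacle is isolating the $\sigma$-finite characterization $A_j = \{a : a h_j^{1/2} \in H^2(A_j)\}$ in the current Haagerup-$L^p$ framework, and checking the identifications $h_j^{1/2} = e_j h^{1/2}$ and $e_j H^2(A) e_j = H^2(A_j)$ within $L^2(M)$. Both rely on the strong commutation of $e_j$ with $h$ already exploited in Section~\ref{strt}, so once the correct formulation of the characterization is in hand, the reduction itself is largely bookkeeping. An alternative route, bypassing the cyclic-vector characterization, is to use the stronger fact from Theorem~\ref{newpr} that $u_i h^{1/2} a \in W \subset H^2(A)$ for all $a \in A \cap \mathfrak{n}_\omega$, to argue directly that left multiplication by $u_i$ maps $H^2(A)$ into $H^2(A)$, and then invoke the analogous compression-based characterization $A = \{a \in M : a H^2(A) \subset H^2(A)\}$; the density argument needed in this alternative, however, is essentially as delicate as the one required for the $h_j^{1/2}$-characterization.
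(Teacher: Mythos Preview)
Your argument for the first claim is essentially the same as the paper's: both show $Z \subset [H^2(A)A_0]_2 \subset H^2_0(A)$ and then invoke $H^2(A) = H^2_0(A) \oplus L^2(D)$. You spell out the inclusion $H^2(A)A_0 \subset H^2_0(A)$ via compression, which the paper simply asserts.

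For the second claim, both you and the paper follow the same overall strategy---compress $u_i$ by $e_j$, show $e_j u_i e_j \in A_j$, and pass to the weak* limit---but the $\sigma$-finite criterion invoked is different. You use a cyclic-vector characterization $A_j = \{a \in M_j : a h_j^{1/2} \in H^2(A_j)\}$; the paper instead uses the sharpened Arveson maximality criterion of Ji--Saito \cite[Theorem 2.2]{JiSaito}, namely that $x \in A_j$ if and only if $\mathbb{E}(a_0 x) = 0$ for all $a_0 \in (A_j)_0$. Concretely, the paper takes $a_0 \in e_\alpha A_0 e_\alpha$ and $b \in e_\alpha L^2(D) e_\alpha$, observes $a_0 e_\alpha u_i e_\alpha b = a_0(u_i b) \in A_0 W \subset H^2_0(A)$, applies $\mathbb{E}_2$ and its $D$-modularity to get $\mathbb{E}(a_0 e_\alpha u_i e_\alpha)b = 0$, and concludes via Ji--Saito.

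Your characterization is correct---indeed it reduces to Ji--Saito by exactly this kind of argument---but the paper's route is cleaner: Ji--Saito is a directly citable black box, whereas your approach forces you to pin down the identifications $h_j^{1/2} = e_j h^{1/2}$ and $e_j h^{1/2} \in L^2(D)$ inside the Haagerup framework, which you yourself flag as the main technical obstacle. Your citation of \cite[Corollary 2.5]{LL} for the cyclic-vector characterization is also a bit indirect, since that result is precisely the $\sigma$-finite antecedent of the present corollary rather than an independent statement of the characterization. So while your approach is valid, the paper's use of the Arveson--Ji--Saito criterion sidesteps the bookkeeping you anticipate and lands on firmer citational ground.
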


\begin{proof} If indeed $X\subset\mathcal{H}^2(A)$, it is a fairly trivial observation to make that 
$Z=[ZA_0]_2 \subset [XA_0]_2\subset [\mathcal{H}^2(A)A_0]_2=\mathcal{H}^2_0(A)$. Since 
$\mathcal{H}^2(A)=\mathcal{H}^2_0(A)\oplus L^2({D})$, the first claim follows. 

Now suppose that $A$ is maximal subdiagonal. To see the second claim recall that in the proof of Theorem \ref{newpr}, 
we showed that $u_i L^2({D})\subset W$ for each $i$.

Let $\{e_\alpha\}$ be the net of projections in $D$ increasing to $\I$  in Theorem \ref{decomp}. Given any $a_0\in e_\alpha A_0 e_\alpha \subset A_0$, we will for any 
$b\in e_\alpha L^2(D) e_\alpha = L^2(e_\alpha D e_\alpha)$ therefore have that $ a_0 e_\alpha u_i e_\alpha b \in a_0 W \subset \A_0H^2(A) \subset H^2_0(A) $. But $\mathbb{E}_2$ annihilates $H^2_0(A)$, and hence we must have that $0=\mathbb{E}_2(a_0 e_\alpha u_i e_\alpha b)=\mathbb{E}( a_0 e_\alpha u_i e_\alpha) b $ for all $b\in
 L^2(e_\alpha D e_\alpha)$. This can of course only be if $\mathbb{E}(a_0 e_\alpha u_i e_\alpha)=0$. Since $a_0\in e_\alpha A_0 e_\alpha$ was arbitrary, 
 we may now apply the sharpened Arveson maximality criterion in 
\cite[Theorem 2.2] {JiSaito} to see that $e_\alpha u_i e_\alpha \in e_\alpha A e_\alpha\subset \A$. The fact that $\{e_\alpha\}$ is increasing 
now clearly ensures that we in fact have that $e_\alpha u_i e_\beta \in A$ for any $\alpha$ and $\beta$. Therefore $u_i=\lim_\alpha\lim_\beta 
e_\alpha u_i e_\beta \in A$ as claimed.
\end{proof}

The next three results correspond to \cite[Corollary 2.6, Proposition 2.7 \& Theorem 2.8]{LL}. The proofs in 
\cite{LL} carry over to the general case, and hence we content ourselves with merely stating these results

\begin{corollary} \label{adcor}  If $X$ is an
invariant subspace of the form described in
Theorem {\rm \ref{inv1}}, then $X$ is type 1 if and only if
 $X = \oplus^{col}_i \, u_i \mathcal{H}^2(A)$, for $u_i$ as
in Theorem {\rm \ref{newpr}}.
\end{corollary}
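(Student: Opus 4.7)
The plan is to pivot on two consequences of Theorem \ref{newpr}: the wandering subspace decomposes as $W = \oplus^{col}_i \, u_i L^2(D)$, and the orthogonality $u_j^* u_i = 0$ for $i \neq j$ ensures that right multiplication by $A$ respects the column sum structure (for $i \neq j$, $(u_j \xi a)^*(u_i \eta a') = a^* \xi^* u_j^* u_i \eta a' = 0$, which delivers the internal column sum condition required in the paragraph after Theorem \ref{inv1}).

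First, assume $X$ is type 1, so $X = [WA]_2$. The orthogonality above promotes this to an internal column sum $[WA]_2 = \oplus^{col}_i \, [u_i L^2(D) A]_2$, and the heart of the matter is the identification $[u_i L^2(D) \cdot A]_2 = u_i H^2(A)$ for each $i$. One inclusion uses $L^2(D) \subset H^2(A)$ (Corollary \ref{Bek3.2}) together with $H^2(A) \cdot A \subset H^2(A)$ (Corollary \ref{prs}). The reverse inclusion, via boundedness of $u_i$ and $u_i = u_i p_i$ with $p_i = u_i^* u_i \in D$, reduces to the density assertion $[L^2(D) \cdot A]_2 = H^2(A)$. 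For this I would exploit $\sigma^\omega_t$-analytic elements of $A$: for such $a \in A$ the commutation $a h^{1/4} = h^{1/4} \sigma^\omega_{i/4}(a)$ yields $\mathfrak{i}^{(2)}(a) = h^{1/4} a h^{1/4} = h^{1/2} \cdot \sigma^\omega_{i/4}(a)$, which lies in $L^2(D) \cdot A$ since $h^{1/2} \in L^2(D)$ (the dual weight restricts compatibly because $D$ is modular invariant with $\omega_{|D}$ strictly semifinite) and $\sigma^\omega_{i/4}(a) \in A$ by modular invariance of $A$. Density of analytic elements in $A \cap \mathfrak{m}^M_\omega$ then supplies the claim by $L^2$-approximation.

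Conversely, assume $X = \oplus^{col}_i \, u_i \mathcal{H}^2(A)$. The plan is to identify the wandering subspace of $X$ explicitly. Using multiplicativity of $\mathbb{E}$ on $A$, one has $H^2(A) \cdot A_0 \subset H^2_0(A)$, and the analogous density $[L^2(D) \cdot A_0]_2 = H^2_0(A)$ (proved by restricting the analytic argument above to $A_0$, which is also modular invariant since $\mathbb{E}$ commutes with $\sigma^\omega_t$) yields $[XA_0]_2 = \oplus^{col}_i \, u_i H^2_0(A)$. By Corollary \ref{JiA3.3}, $H^2(A) = H^2_0(A) \oplus L^2(D)$, so the wandering subspace of $X$ is $X \ominus [XA_0]_2 = \oplus^{col}_i \, u_i L^2(D)$. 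Applying the forward direction to this wandering subspace returns $[WA]_2 = X$, so $X$ is type 1.

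The main obstacle is the density $[L^2(D) \cdot A]_2 = H^2(A)$. The analytic element approach is cleanest in spirit, but one must carefully justify $L^2$-norm convergence $\mathfrak{i}^{(2)}(a_n) \to \mathfrak{i}^{(2)}(a)$ along an entire analytic approximating sequence $a_n \to a$ in $A \cap \mathfrak{m}^M_\omega$; the standard smoothing $a_n = \sqrt{n/\pi}\int_\mathbb{R} \sigma^\omega_t(a) e^{-nt^2} \, dt$ combined with continuity properties of $\mathfrak{i}^{(2)}$ established in \cite{GL2} should suffice. Alternatively, one may compress to each $\sigma$-finite $e_\alpha M e_\alpha$ via Theorem \ref{decomp}, reducing to the corresponding $\sigma$-finite density (already standard in the literature cited after Proposition \ref{isma2}), and then pass to the limit in $\alpha$.
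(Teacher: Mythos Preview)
Your overall strategy matches the paper's: forward direction via $X = [WA]_2$ with $W = \oplus^{col}_i u_i L^2(D)$, converse by computing $[XA_0]_2 = \oplus^{col}_i u_i H^2_0(A)$ and identifying the wandering subspace. The paper's proof is terse (``follows from Theorem \ref{newpr}'') and explicitly defers details to \cite{LL}, so you are filling in what the paper leaves implicit.

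There is one genuine slip in your primary approach to the density $[L^2(D)\cdot A]_2 = H^2(A)$. You assert $h^{1/2} \in L^2(D)$, but this is only true when $\omega$ is finite: $h^{1/2}$ corresponds to $\mathfrak{i}^{(2)}(1)$, and $1 \in \mathfrak{m}_\omega$ exactly when $\omega(1) < \infty$. In the semi-$\sigma$-finite setting $\omega$ is merely strictly semifinite on $D$, so in general $h^{1/2}$ is not even in $L^2(M)$, and your formula $\mathfrak{i}^{(2)}(a) = h^{1/2}\cdot \sigma^\omega_{i/4}(a)$ does not place $\mathfrak{i}^{(2)}(a)$ inside $L^2(D)\cdot A$ as written. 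The fix is the one you already supply as an alternative: compress to $e_\alpha M e_\alpha$ via Theorem \ref{decomp}, where $\omega_\alpha$ is finite and the analytic-element argument (or the $\sigma$-finite result from \cite{LL}) gives $[L^2(D_\alpha)\cdot A_\alpha]_2 = H^2(A_\alpha)$, then take the limit in $\alpha$. With that correction your argument is complete and essentially the paper's.
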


\begin{proof}  If $X$ is type 1, then $X = [W A]_2$
where $W$ is the right wandering space, and so the one assertion
follows from Theorem {\rm \ref{newpr}}.
If $X = \oplus^{col}_i \, u_i \mathcal{H}^2(A)$, for $u_i$ as above, then
$[X A_0]_2 = \oplus^{col}_i \, u_i \mathcal{H}^2(A_0)$, and from this
it is easy to argue that  $W =
\oplus^{col}_i \, u_i L^2({D})$.
Thus $X = [W A]_2 = \oplus^{col}_i \, u_i \mathcal{H}^2(A)$.
  \end{proof}
  
\begin{proposition} \label{typestuff}   Let $X$ be a
closed $\A$-invariant subspace of $L^2(M)$, where $A$ is an
analytically conditioned subalgebra of $M$.
\begin{itemize}  \item [(1)]
If $X = Z \oplus [Y A]_2$ as in Theorem
 {\rm \ref{inv1}}, then $Z$ is type 2, and $[Y A]_2$ is type 1.
 \item [(2)]  If $A$ is a maximal subdiagonal algebra,
and if $X = K_2 \oplus^{col} K_{1}$ where
$K_1$ and $K_{2}$ are types 1 and 2 respectively,
then $K_1$ and $K_2$ are respectively the unique
spaces $Z$ and $[Y A]_2$ in  Theorem  {\rm \ref{inv1}}.
 \item [(3)]  If $A$ and $X$ are as in {\rm (2)},
and if $X$ is type 1 (resp.\ type 2),
then the space  $Z$ of Theorem  {\rm \ref{inv1}}
for $X$ is $(0)$ (resp.\ $Z = X$).
 \item [(4)]   If  $X = K_2 \oplus^{col} K_{1}$ where
$K_1$ and $K_{2}$ are types 1 and 2 respectively,
then the right wandering subspace for $X$
equals the right wandering subspace for $K_1$.
  \end{itemize} \end{proposition}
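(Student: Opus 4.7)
My plan is to prove the four parts in the order (4), (1), (2), (3), since (4) is the structural lemma that the rest of the proposition rests on. The fundamental observation underpinning everything is that for $p = 2$ an internal column sum $X = K_2 \oplus^{col} K_1$ in $L^2(M)$ is automatically an ordinary Hilbert space orthogonal direct sum: the defining relation $K_2^* K_1 = 0$ immediately yields $\langle k_1, k_2 \rangle = \mathrm{tr}(k_2^* k_1) = 0$ for $k_i \in K_i$. Consequently, orthogonal complementation inside such decompositions is well behaved.

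For (4), I would first establish the identity $[XA_0]_2 = K_2 \oplus^{col} [K_1 A_0]_2$. One inclusion is clear; for the other, write $XA_0 \subset K_2 A_0 + K_1 A_0$ and observe that $(K_2 A_0)^*(K_1 A_0) \subset A_0^* K_2^* K_1 A_0 = 0$, so $[K_2 A_0]_2$ and $[K_1 A_0]_2$ are mutually $L^2$-orthogonal and their algebraic sum is already closed. Since $K_2$ is of type 2, $[K_2 A_0]_2 = K_2$, giving the identity. Taking orthogonal complements inside $X = K_2 \oplus K_1$ now yields the right wandering subspace of $X$ equal to $K_1 \ominus [K_1 A_0]_2$, which is precisely the right wandering subspace of $K_1$. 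Note that this argument uses only that $K_2$ is of type 2, not that $K_1$ is of type 1.

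Applying this strengthened version of (4) to the decomposition $X = Z \oplus^{col} [YA]_2$ from Theorem \ref{inv1}(1) shows that the right wandering subspace of $X$ equals that of $[YA]_2$; combined with Theorem \ref{inv1}(2), which identifies the former as $[YD]_2$, we conclude that $[YD]_2$ is also the right wandering subspace of $[YA]_2$. Since $1 \in D$ we have $Y \subset [YD]_2$, hence $[YA]_2 \subset [[YD]_2 A]_2$, so $[YA]_2$ is type 1; the type 2-ness of $Z$ is part of the hypothesis of Theorem \ref{inv1}(1), which completes (1). For (2), maximal subdiagonality lets us invoke Theorem \ref{inv1}(5) to obtain a decomposition $X = Z \oplus^{col} [YA]_2$ with $Y$ the right wandering subspace of $X$; then (4) identifies the wandering subspace of $K_1$ with that of $X$, namely $[YD]_2$, so since $K_1$ is type 1 we have $K_1 = [[YD]_2 A]_2 = [YA]_2$, and orthogonal complementation forces $K_2 = Z$. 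Part (3) then follows from (2) applied to the trivial column decompositions $X = 0 \oplus^{col} X$ (when $X$ is type 1) and $X = X \oplus^{col} 0$ (when $X$ is type 2). The main subtlety I anticipate is the closure step in identifying $[XA_0]_2$: since multiplication $L^2 \times L^2 \to L^2$ is not continuous, closedness of $[K_2 A_0]_2 + [K_1 A_0]_2$ must be deduced from orthogonality rather than from multiplicative continuity.
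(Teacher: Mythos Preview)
Your argument is correct. The paper gives no proof of this proposition, merely noting that the proof of \cite[Proposition 2.7]{LL} carries over verbatim; your approach---observing that an $L^2$ column sum is an orthogonal Hilbert-space sum, establishing $[XA_0]_2 = K_2 \oplus [K_1 A_0]_2$ from the type-2 hypothesis on $K_2$ alone, and then reading off wandering subspaces---is exactly the standard argument from that reference, including the useful strengthening of (4) that feeds into (1).
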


On collating the information contained in the preceding four results, we obtain the following structure theorem for invariant subspaces of $L^2$.

\begin{theorem} \label{main}  If $\A$ is a maximal subdiagonal
subalgebra of $M$, and if $K$ is a closed right  $A$-invariant subspace 
of $L^2(M)$,
then: \begin{itemize}
\item [(1)]   $K$ may be written uniquely as
an (internal) $L^2$-column sum $K_2 \oplus^{col} K_{1}$
of a type 1 and a type 2 invariant subspace of $L^2(M)$, respectively.
  \item [(2)]  If
$K  \neq (0)$ then $K$ is type 1 if and only if
$K = \oplus_i^{col} \, u_i \, H^2$, for $u_i$  partial isometries
with mutually orthogonal ranges and $|u_i| \in {D}$.
\item [(3)]
The right wandering subspace $W$ of $K$
is an $L^2({D})$-module in the sense of Junge and Sherman, and in
particular $W^* W \subset L^{1}({D})$.
\end{itemize}
\end{theorem}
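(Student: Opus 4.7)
The plan is to obtain Theorem \ref{main} as a straightforward assembly of the preceding results in this section; no new analytic ingredient is needed beyond what has already been established. First I would apply Theorem \ref{inv1}(5) to the closed right $A$-invariant subspace $K$ (using here that $A$ is maximal subdiagonal) to produce a decomposition $K = Z \oplus^{col} [YA]_2$ in which $Z$ is type $2$, $Y = W$ is the right wandering subspace of $K$, and, by (3) of the same theorem, $Y$ may be taken to be a $D$-submodule satisfying $Y^*Y \subset L^1(D)$. Proposition \ref{typestuff}(1) then certifies that $[YA]_2$ is type $1$. Setting $K_2 := Z$ and $K_1 := [WA]_2$ delivers the decomposition of part (1), and uniqueness of this decomposition is exactly what Proposition \ref{typestuff}(2) provides when one starts from any hypothetical type-$1$/type-$2$ column splitting.

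Next I would handle (2) by invoking Corollary \ref{adcor}, which already records the equivalence in both directions. The precise description of the partial isometries $u_i$ is read off from Theorem \ref{newpr}: each $u_i$ is a partial isometry with initial projection $u_i^*u_i \in D$ (equivalently $|u_i| \in D$), and the relations $u_j^*u_i = 0$ for $i \neq j$ are exactly what makes the summands $u_i L^2(D)$, and hence also $u_i H^2(A)$, internally orthogonal in the column $L^2$-sum, i.e.\ yield mutually orthogonal ranges in the sense used in the statement (this is how an internal column sum was defined just before Theorem \ref{inv1}). The corollary immediately following Theorem \ref{newpr} then upgrades each $u_i$ from $M$ to $A$, using maximal subdiagonality via the sharpened Arveson maximality criterion of \cite{JiSaito}; this is the second place where the maximality hypothesis is essential.

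For (3), the identification $W = Y$ from Theorem \ref{inv1} together with the bound $Y^*Y \subset L^1(D)$ carried by that theorem yields $W^*W \subset L^1(D)$ at once. The assertion that $W$ is an $L^2(D)$-module in the Junge--Sherman sense is simply a reinterpretation of the decomposition $W = \oplus_i^{col} u_i L^2(D)$ produced in Theorem \ref{newpr} as an orthogonal direct sum of cyclic $D$-modules, each spatially isomorphic to $e L^2(D)$ for a projection $e \in D$; this is precisely the normal-Hilbert-$D$-module structure invoked at the start of that proof. The only real obstacle is bookkeeping: one must confirm that "mutually orthogonal ranges" in (2) translates to the relations $u_j^*u_i = 0$ (rather than $u_i u_j^* = 0$), but this is forced by the definition of the internal column $L^2$-sum, so no fresh analysis is required.
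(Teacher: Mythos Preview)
Your proposal is correct and matches the paper's own treatment: the paper presents Theorem \ref{main} without a written proof, merely stating that it is obtained by ``collating the information contained in the preceding four results,'' and your assembly from Theorem \ref{inv1}(5), Proposition \ref{typestuff}, Corollary \ref{adcor}, and Theorem \ref{newpr} is exactly that collation. The one superfluous step is your invocation of the unnamed corollary to place the $u_i$ in $A$---the statement of (2) only asks for partial isometries (in $M$) with $|u_i|\in D$, so this upgrade, while true, is not needed here.
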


The theory of invariant subspaces described above carries over to yield analogous statements in the setting of $L^p$ spaces ($1\leq p\leq \infty$) just as the authors 
showed in the $M$ `tracially finite' case in \cite{BL-Beurling}. However the proof of these facts requires some significant analysis which is far beyond the scope of this paper, and  will be postponed to a future paper of the second author and Xu \cite{LXdraft}.  
We content ourselves with merely stating the results. Details will be provided in the forthcoming paper of Labuschagne and Xu \cite{LXdraft}. The following result extends \cite[Corollary 4.3]{BL-Beurling}. 

\begin{theorem}\label{latt-isom} For any $1\leq p, q\leq \infty$ there is a lattice isomorphism from the weak*-closed right 
$A$-invariant subspaces of $L^p$ to those of $L^q$. We in particular have the following:
\begin{enumerate}
\item Given $2\leq p < \infty$ and a right $\A$-invariant closed subspace $K$ of $L^p(M)$, the prescription 
$K\to \overline{\mathcal{S}(K)_p}^{w^*}$ where $\mathcal{S}(K)_p$ is a right $A \cap \mathfrak{n}_\omega$-invariant subspace of 
$\mathfrak{n}_\omega$ for which $K=[\clo{\mathcal{S}(K)_ph^{1/p}}]_p$, realises a lattice isomorphism from the right 
$A$-invariant subspaces of $L^p(M)$ to those of $L^\infty(M)$.
\item Given $1\leq p < 2$ and a right $A$-invariant closed subspace $K$ of $L^p(M)$, the prescription 
$K\to [\clo{h^{1/2}\mathcal{S}(K)_p}]_2$ (where $r>0$ is chosen so that $\frac{1}{p}=\frac{1}{2}+\frac{1}{r}$ and where 
$\mathcal{S}(K)_p$ is a right $A \cap \mathfrak{n}_\omega$-invariant subspace of $\mathfrak{m}_\omega$ for which 
$K= [\clo{h^{1/2}\mathcal{S}(K)_ph^{1/r}}]_p$) realises a lattice isomorphism from the right $\A$-invariant subspaces of 
$L^p(\M)$ to those of $L^2(M)$.
\end{enumerate}
\end{theorem}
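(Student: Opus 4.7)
The plan is to leverage the reduction machinery of Section~\ref{strt}, especially Theorem~\ref{decomp}, together with the $\sigma$-finite case of the result (a direct extension of \cite[Corollary 4.3]{BL-Beurling} from the tracially finite to the $\sigma$-finite setting) to transfer the lattice isomorphism to the semi-$\sigma$-finite setting via an exhaustion by compressions. First I would settle the $\sigma$-finite case by adapting the arguments of \cite{BL-Beurling}; this provides, for each $i$, a lattice isomorphism $\Phi_i^{p,q}$ between the right $\A_i$-invariant closed subspaces of $L^p(\M_i)$ and of $L^q(\M_i)$, realised by the explicit prescriptions in (1) and (2) applied internally to $\M_i$. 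Since $e_i \in \D \cap \M_\omega$ commutes strongly with $h=\frac{d\widetilde{\omega}}{d\tau}$ (as recorded in the preliminaries), the Haagerup $L^p$-spaces restrict coherently: $L^p(\M_i)=e_iL^p(\M)e_i$ and multiplication by $h^{1/p}$ commutes with compression by $e_i$. Consequently the $\Phi_i^{p,q}$ are compatible with the inclusions $L^p(\M_i)\subset L^p(\M_j)$ whenever $e_i\leq e_j$.

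Next, given a closed right $\A$-invariant subspace $K$ of $L^p(\M)$, I set $K_i:=\overline{e_iKe_i}^{\Vert\cdot\Vert_p}$. Each $K_i$ is a closed right $\A_i$-invariant subspace of $L^p(\M_i)$ (using $e_i\A e_i=\A_i$ from Theorem~\ref{isma}), and Theorem~\ref{decomp} yields $K=\overline{\bigcup_iK_i}^{\Vert\cdot\Vert_p}$. Define
\[\Phi^{p,q}(K):=\overline{\bigcup_i \Phi_i^{p,q}(K_i)}^{\Vert\cdot\Vert_q}\]
(weak*-closure when $q=\infty$). Right $\A$-invariance of $\Phi^{p,q}(K)$ follows from the weak* density of $\bigcup_i\A_i$ in $\A$ (Theorem~\ref{isma}(3)) together with the compatibility $\Phi_j^{p,q}(K_j)\A_i\subset \Phi_j^{p,q}(K_j)$ for $i\leq j$. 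I would then verify that $\Phi^{p,q}$ respects the lattice order, intersection, and closed span, each of which reduces to the corresponding fact for $\Phi_i^{p,q}$ together with the approximation identities $(K\cap K')_i=K_i\cap K'_i$ and its analogue for spans. Constructing $\Phi^{q,p}$ symmetrically, the mutual inversion of $\Phi^{p,q}$ and $\Phi^{q,p}$ is checked at each finite level.

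Finally, to establish the explicit prescriptions in (1) and (2), I would let $\mathcal{S}(K_i)_p$ be the right $\A_i\cap\mathfrak{n}_\omega$-invariant subspace of $\mathfrak{n}_\omega$ (respectively $\mathfrak{m}_\omega$) furnished by the $\sigma$-finite case, and set $\mathcal{S}(K)_p$ to be the weak*-closure of $\bigcup_i \mathcal{S}(K_i)_p$. The strong commutation of $e_i$ with $h$ ensures that the identities $K_i=[\clo{\mathcal{S}(K_i)_p h^{1/p}}]_p$ and $K_i=[\clo{h^{1/2}\mathcal{S}(K_i)_p h^{1/r}}]_p$ assemble in the limit to the required global identities.

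The main obstacle is the careful handling of the unbounded Radon--Nikodym derivative $h$ in conjunction with the compressions $e_i\M e_i$. Although strict semifiniteness guarantees that $e_i$ and $h$ commute strongly, ensuring that the formulas $\clo{\mathcal{S}(K)_p h^{1/p}}$ and $\clo{h^{1/2}\mathcal{S}(K)_p h^{1/r}}$ make sense \emph{globally} (and not merely on each compression), and that the resulting lattice isomorphism is truly independent of the exhausting net $(e_i)$, requires delicate arguments about closability, density and the behaviour of multiplication by fractional powers of $h$ under weak* limits; this is precisely where the forthcoming paper \cite{LXdraft} is expected to supply the technical backbone.
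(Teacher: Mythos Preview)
The paper does not actually prove Theorem~\ref{latt-isom}. Immediately before stating it the authors write: ``However the proof of these facts requires some significant analysis which is far beyond the scope of this paper, and will be postponed to a future paper of the second author and Xu \cite{LXdraft}. We content ourselves with merely stating the results.'' So there is no proof in the paper against which to compare your proposal.

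That said, your proposed strategy---reduce to the $\sigma$-finite case on each compression $e_i M e_i$ and then pass to the limit using Theorem~\ref{decomp}---is exactly the philosophy the paper uses for the other $H^p$ results in Section~\ref{strt}, and is the natural approach here. You correctly identify the genuine obstacle: the global sense of expressions like $\clo{\mathcal{S}(K)_p h^{1/p}}$ when $h$ is not $\tau$-measurable, and the independence of the construction from the net $(e_i)$. You also rightly flag that \cite{LXdraft} is where the authors intend these technicalities to be handled. One point you may be underestimating: the claim that the $\sigma$-finite case is ``a direct extension of \cite[Corollary 4.3]{BL-Beurling}'' is itself nontrivial, since \cite{BL-Beurling} works in the tracially finite setting where $h$ is absent and the embeddings $\mathfrak{i}^{(p)}$ are simply inclusions; the passage to general $\sigma$-finite $M$ with a state already requires the Haagerup $L^p$ machinery and the careful treatment of $h^{1/p}$ that you defer to the global step. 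So the ``significant analysis'' the authors allude to likely begins already at the $\sigma$-finite level, not only at the assembly stage.
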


As in the case of $L^2$, we say that a closed right $A$-invariant subspace $K$ of $L^p$ is a type 2 invariant subspace if $K=[KA_0]_p$. With this concept in place, we are now able to present the following analogue of Theorem \ref{main}. This result extends each of \cite[Theorem 4.6]{sager} and \cite[Theorems 3.6 \& 3.8]{BekR}. The proof of the second part closely follows that of \cite[Theorem 4.5]{BL-Beurling}.

\begin{theorem} \label{lp-inv}  Let $\A$ be a maximal subdiagonal subalgebra of $\M$, and suppose that $K$ is a closed 
$\A$-invariant subspace of $L^p(M)$, for $1 \leq p \leq \infty$.  (For $p = \infty$ we assume that $K$ is $\sigma$-weakly closed.) 
\begin{enumerate}
\item The space $K$ may then be written as an $L^p$-column sum of the form $Z \oplus^{col} (\oplus_i^{col} \, u_i H^p)$ where $Z$ is a type 2 closed right $A$-invariant subspace of $L^p$, and where the $u_i$'s are partial isometries in $M \cap K$ with $u^*_ju_i = 0$ if $i \neq j$ and $u_i^* u_i \in \D$. Moreover, for each $i$, $u_i^* Z = (0)$, left multiplication by the $u_i u_i^*$ are contractive projections from $K$ onto the summands $u_i H^p(A)$, and left multiplication
by $1 - \sum_i \, u_i u_i^*$ is a contractive projection from $K$ onto $Z$.
\item Let $K$ be in the form $Z \oplus^{col} (\oplus_i^{col} \, u_i H^p)$ described above. Then there exists a contractive projection from $K$ onto $\oplus_i^{col} \, u_i L^p({D})$ and along $[KA_0]_p$. The quotient $K/[K A_0]_p$ is therefore isometrically ${D}$-isomorphic to the subspace $\oplus_i^{col} \, u_i L^p({D})$.
(Here $[\cdot]_\infty$ is as usual the $\sigma$-weak closure.)
\end{enumerate}
\end{theorem}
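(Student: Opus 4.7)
The plan is to reduce everything to the $L^2$ Beurling theorem (Theorem \ref{main}) via the lattice isomorphism of Theorem \ref{latt-isom}, and then transfer the decomposition back to $L^p$. Concretely, starting with a right $A$-invariant subspace $K \subset L^p(M)$, I transport it to the corresponding right $A$-invariant subspace $\widetilde{K} \subset L^2(M)$ via Theorem \ref{latt-isom}. Applying Theorem \ref{main} yields a decomposition $\widetilde{K} = \widetilde{Z} \oplus^{col} (\oplus_i^{col} u_i H^2(A))$, where $\widetilde{Z}$ is type 2 and the $u_i$ are partial isometries in $M$ with $u_j^* u_i = 0$ for $i \neq j$ and $u_i^* u_i \in D$. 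Because the $u_i$ live in $M$ and left multiplication by elements of $M$ commutes with the right $A$-action, the subspaces $u_i H^p(A)$ are again right $A$-invariant in $L^p(M)$, and a candidate decomposition of $K$ suggests itself: $Z \oplus^{col} (\oplus_i^{col} u_i H^p(A))$, where $Z$ is the image of $\widetilde{Z}$ under the inverse lattice isomorphism.

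To verify this is an internal column $L^p$-sum, I check that the mutual products of adjoints vanish: $(u_j H^p)^*(u_i H^p) = H^p u_j^* u_i H^p = \{0\}$ for $i \neq j$, and $Z^*(u_i H^p) = \{0\}$ follows from the relation $u_i^* \widetilde{Z} = (0)$ in $L^2$ (which transfers to $u_i^* Z = (0)$ in $L^p$ by the explicit form of the lattice isomorphism, since it is given by left-multiplication-compatible formulas involving $h^{1/p}$). Density of the sum in $K$ then follows because the lattice isomorphism sends sums to sums and the $L^2$ decomposition is exhaustive. For the contractive projection assertions, I note that $u_i u_i^*$ is a projection in $M$, so left multiplication by it is a contractive map on $L^p(M)$; on the column sum it sends $u_j H^p \to \delta_{ij} u_i H^p$ (using $u_i^* u_j = 0$ for $i \neq j$ and $u_i e_i = u_i$), and it annihilates $Z$ since $u_i^* Z = (0)$. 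An analogous argument works for the complementary projection associated to $1 - \sum_i u_i u_i^*$, with the sum interpreted as SOT-convergent against elements of $K$.

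For part (2), I first identify $[KA_0]_p = Z \oplus^{col} (\oplus_i^{col} u_i H_0^p(A))$. The inclusion $Z \subset [ZA_0]_p$ holds because $Z$ is type 2, and $[u_i H^p(A) A_0]_p = u_i H_0^p(A)$ follows from $A A_0 \subset A_0$ (hence $H^p A_0 \subset H_0^p$) together with $A_0 \subset H^p A_0$. Corollary \ref{JiA3.3} together with its $L^p$ analogue (obtained again by transfer through Theorem \ref{latt-isom}) gives $H^p(A) = H_0^p(A) \oplus L^p(D)$, so the quotient $K/[KA_0]_p$ is isometrically identified with $\oplus_i^{col} u_i L^p(D)$. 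The projection $\xi \mapsto \sum_i u_i E_p(u_i^* \xi)$ realises this quotient map and is contractive since $E_p$ is a contractive projection on $L^p(M)$ and the $u_i u_i^*$ have mutually orthogonal ranges.

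The hardest step will be controlling the lattice isomorphism of Theorem \ref{latt-isom} carefully enough to certify that column sums and type 2 subspaces are preserved under the transfer, especially in the range $1 \leq p < 2$ where the formula involves an auxiliary exponent $r$ with $1/p = 1/2 + 1/r$ and expressions of the form $[\clo{h^{1/2} \mathcal{S}(K)_p h^{1/r}}]_p$. The key fact that makes everything work is that left multiplication by elements of $M$ (in particular by the $u_i$ and by the spectral projections $u_i u_i^*$) commutes with the formulas defining the lattice isomorphism, so the decomposition into $u_i$-components survives the passage between $L^p$ levels; this is also the main point used in \cite{BL-Beurling} for the tracially finite case and is the content that \cite{LXdraft} will supply in full detail.
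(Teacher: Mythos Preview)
The paper does not actually prove this theorem: immediately before Theorem~\ref{latt-isom} it states that ``the proof of these facts requires some significant analysis which is far beyond the scope of this paper, and will be postponed to a future paper of the second author and Xu \cite{LXdraft}'', and the only hint given is that ``the proof of the second part closely follows that of \cite[Theorem 4.5]{BL-Beurling}''. So there is no in-paper proof to compare against.

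That said, your outline is exactly the strategy the paper signals: use the lattice isomorphism of Theorem~\ref{latt-isom} to transport to $L^2$, invoke Theorem~\ref{main} there, and transport back, with part~(2) handled as in \cite[Theorem 4.5]{BL-Beurling}. You have also correctly located the genuine difficulty, namely showing that the lattice isomorphism respects column sums and commutes with left multiplication by the $u_i$ (and by $u_iu_i^*$), particularly in the range $1\le p<2$ where the defining formula places $h^{1/2}$ on the \emph{left}. This compatibility is precisely what the paper declines to verify here and defers to \cite{LXdraft}; your sketch assumes it rather than proving it, so as a self-contained proof the proposal has the same gap the paper acknowledges. One small additional point: your appeal to Corollary~\ref{JiA3.3} for $H^p=H_0^p\oplus L^p(D)$ covers only $1<p<\infty$; for $p=1$ you should instead cite the separate argument the paper gives just after that corollary, and for $p=\infty$ the corresponding weak*-closed statement also needs its own justification.
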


For $p \neq 2$ we say that a right $A$-invariant subspace of $L^p(M)$ is  type 1 if it corresponds to a type 1 invariant subspace in $L^2(M)$ under the canonical lattice isomorphism constructed
in Theorem \ref{latt-isom}. Then 
the first assertion of the preceding theorem may be interpreted as the statement that any closed right $\A$-invariant subspace of $L^p$ may be written as a column sum of a type 1 and type 2 invariant subspace. The following analogue of Beurling's characterization of weak* closed ideals of $H^\infty(\mathbb{D})$ now also readily follows from Theorem \ref{lp-inv}. This extends \cite[Corollary 4.8]{BL-Beurling} where this fact was noted for the case of finite von Neumann algebras.

\begin{corollary} \label{rid}  If  $\A$ is maximal subdiagonal, then the type 1 $\sigma$-weakly closed right ideals of $A$
are precisely those right ideals of the form $\oplus_i^{col} \, u_i A$, for partial isometries $u_i \in A$ with mutually 
orthogonal ranges and $|u_i| \in {D}$.
\end{corollary}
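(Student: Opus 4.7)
The plan is to specialize Theorem~\ref{lp-inv}(1) to the case $p=\infty$ and read off the corollary. A $\sigma$-weakly closed right ideal $J$ of $A$ is precisely a $\sigma$-weakly closed right $A$-invariant subspace of $L^\infty(M)=M$ that happens to be contained in $A$; hence the structure theorem applies. After identifying $H^\infty(A)$ with $A$ itself, the corollary reduces to two bookkeeping claims: (i) the type~2 summand of a type~1 ideal vanishes, and (ii) every column sum $\oplus_i^{col} u_i A$ of the described form is type~1.

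For the forward direction I would apply Theorem~\ref{lp-inv}(1) to $J$ to obtain $J = Z \oplus^{col} (\oplus_i^{col} u_i A)$, with $u_i \in M\cap J$ partial isometries satisfying $u_j^* u_i = 0$ for $i\neq j$ and $u_i^* u_i \in D$. Since $J$ is type~1 by hypothesis, and since the decomposition into type~1 and type~2 pieces is unique (this is built into part~(1) of Theorem~\ref{lp-inv} together with the definition of ``type~1'' for $p\neq 2$ given just before Theorem~\ref{lp-inv}), the type~2 summand $Z$ must vanish. The inclusion $u_i \in M \cap J \subset A$ forces $u_i\in A$; because $u_i$ is a partial isometry, $|u_i|$ is the initial projection $u_i^* u_i$, which lies in $D$; and the relation $u_j^* u_i = 0$ says precisely that $\langle u_i x, u_j y\rangle = 0$ for all vectors $x,y$, i.e.\ the ranges of the $u_i$ are mutually orthogonal.

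Conversely, given partial isometries $u_i\in A$ with $|u_i|\in D$ and mutually orthogonal ranges, the column sum $J = \oplus_i^{col} u_i A$ is plainly a right $A$-submodule of $A$, and is $\sigma$-weakly closed by the definition of column sum at $p=\infty$. Its shape coincides with the type~1 summand of Theorem~\ref{lp-inv}(1); and since the corresponding $L^2$ subspace $\oplus_i^{col} u_i H^2(A)$ is type~1 by Corollary~\ref{adcor}, the lattice isomorphism of Theorem~\ref{latt-isom} identifies $J$ with a type~1 $L^2$-invariant subspace, so $J$ is type~1 in the $L^\infty$ sense by definition.

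The main obstacle is conceptual rather than computational: one must trust the machinery of Theorems~\ref{lp-inv} and~\ref{latt-isom}, whose full proofs are deferred to \cite{LXdraft}. Granting these, the only further verification needed is that the lattice isomorphism carries $\oplus_i^{col} u_i H^2(A)$ to $\oplus_i^{col} u_i A$. This should be transparent from the prescriptions of Theorem~\ref{latt-isom}, which act by multiplication by powers of $h = d\widetilde{\omega}/d\tau$ and therefore respect column sums built from a common family of partial isometries drawn from $A$.
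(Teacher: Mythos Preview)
Your proposal is correct and is precisely the argument the paper intends: the corollary is presented without proof, the text simply saying it ``readily follows from Theorem \ref{lp-inv}'', and your sketch is exactly the natural way to read that off in the case $p=\infty$. The only comment worth making is that the uniqueness of the type~1/type~2 splitting you invoke is stated explicitly for $L^2$ in Proposition~\ref{typestuff}(2) and then transported to $L^p$ via Theorem~\ref{latt-isom}, rather than being part of Theorem~\ref{lp-inv}(1) itself; but you effectively use this when you pass through Corollary~\ref{adcor} and the lattice isomorphism, so the argument is sound.
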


\section{Lebesgue decomposition, F \& M Riesz, and Gleason-Whitney theorems} \label{lebd} 

\subsection{F \& M Riesz spaces}

Let $M$ be a von Neumann algebra. 
Generalizing {\rm p.\ 8230} in {\rm \cite{blueda}},  a subalgebra (resp.\ subspace)  $A$ of a von Neumann algebra $M$ will be  said to be an {\em  F \& M Riesz algebra} (resp.\ {\em space})
 if whenever $\varphi \in M^*$ annihilates $A$ (that is, $\varphi \in A^\perp$) 
then the normal and singular parts, $\varphi_n$ and $\varphi_s$, also annihilate $A$.   
Following \cite{Ueda}, we showed in  \cite[Section 5]{blueda} that this F \& M Riesz property was a consequence of a {\em Lebesgue decomposition} for $A^*$.
This idea was generalized further to linear spaces in  {\rm \cite{ClouatreH}}.   
See e.g.\ Theorem 2.4 in \cite{ClouatreH}, although there are some subtle differences between their setup and ours. 

Write $A^*_s$ and $A^*_n$ for the set of restrictions to $A$ of singular and normal functionals on $M$. One has:

\begin{lemma} \label{fm} A  subspace $A$ of a von Neumann algebra $M$ is  an F $\&$ M Riesz space if and only if $A_n^* \cap A_s^* = (0)$,
and indeed  if and only if  $A$ has a unique Lebesgue decomposition  relative to $M$: that is, any $\varphi \in A^*$ may be written 
uniquely as $\varphi = \varphi_n + \varphi_s$
with $\varphi_n \in A^*_n$ and $\varphi_s \in A^*_s$.   Moreover, $\Vert \varphi \Vert 
= \Vert \varphi_n \Vert  + \Vert  \varphi_s \Vert$.  
 \end{lemma}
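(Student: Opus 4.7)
The plan is to establish the three equivalences in a small cycle and then deduce the norm identity from a single Hahn-Banach extension.

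First I would record the two structural facts on which everything rests: (a) Takesaki's $\ell^1$-decomposition $M^{*}=M_{*}\oplus_{1}M^{*}_{s}$, so that every $\varphi\in M^{*}$ has a unique splitting $\varphi=\varphi_{n}+\varphi_{s}$ with $\|\varphi\|=\|\varphi_{n}\|+\|\varphi_{s}\|$; and (b) Hahn-Banach, giving, for each $\psi\in A^{*}$, a norm-preserving extension $\tilde{\psi}\in M^{*}$. Combining (a) applied to $\tilde{\psi}$ with restriction to $A$ produces a decomposition $\psi=\tilde{\psi}_{n}|_{A}+\tilde{\psi}_{s}|_{A}$, which lies in $A^{*}_{n}+A^{*}_{s}$. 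This establishes existence of a Lebesgue decomposition for any $\psi\in A^{*}$; only uniqueness is at issue.

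Next I would prove the equivalence of the three clauses. For F \& M Riesz $\Rightarrow A^{*}_{n}\cap A^{*}_{s}=(0)$: given $\omega\in A^{*}_{n}\cap A^{*}_{s}$, pick a normal $\phi$ and a singular $\eta$ on $M$ with $\phi|_{A}=\omega=\eta|_{A}$; then $\phi-\eta\in A^{\perp}$, and the F \& M Riesz property applied to $\phi-\eta$ forces $\phi\in A^{\perp}$, i.e.\ $\omega=0$. For $A^{*}_{n}\cap A^{*}_{s}=(0)\Rightarrow$ uniqueness: any two decompositions $\varphi_{n}+\varphi_{s}=\psi_{n}+\psi_{s}$ give $\varphi_{n}-\psi_{n}=\psi_{s}-\varphi_{s}\in A^{*}_{n}\cap A^{*}_{s}=(0)$. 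For uniqueness $\Rightarrow$ F \& M Riesz: if $\varphi\in A^{\perp}\subset M^{*}$ then $0=\varphi|_{A}=\varphi_{n}|_{A}+\varphi_{s}|_{A}$ is a Lebesgue decomposition of $0\in A^{*}$; by uniqueness (applied to the element $0$ whose obvious decomposition is $0+0$), both $\varphi_{n}|_{A}$ and $\varphi_{s}|_{A}$ vanish, which is precisely the F \& M Riesz condition.

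For the norm identity, fix $\varphi\in A^{*}$ with its (now unique) decomposition $\varphi=\varphi_{n}+\varphi_{s}$. The triangle inequality gives $\|\varphi\|\leq\|\varphi_{n}\|+\|\varphi_{s}\|$. For the reverse, choose a norm-preserving Hahn-Banach extension $\tilde{\varphi}\in M^{*}$; by (a), $\tilde{\varphi}=\tilde{\varphi}_{n}+\tilde{\varphi}_{s}$ with $\|\tilde{\varphi}\|=\|\tilde{\varphi}_{n}\|+\|\tilde{\varphi}_{s}\|$. Restricting to $A$ yields another Lebesgue decomposition of $\varphi$, so by uniqueness $\varphi_{n}=\tilde{\varphi}_{n}|_{A}$ and $\varphi_{s}=\tilde{\varphi}_{s}|_{A}$. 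Since restriction is contractive,
\[
\|\varphi_{n}\|+\|\varphi_{s}\|\leq\|\tilde{\varphi}_{n}\|+\|\tilde{\varphi}_{s}\|=\|\tilde{\varphi}\|=\|\varphi\|,
\]
giving equality.

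The only mildly delicate step is the implication F \& M Riesz $\Rightarrow A^{*}_{n}\cap A^{*}_{s}=(0)$, where one must remember that $A^{*}_{n}$ and $A^{*}_{s}$ are defined via restrictions from $M^{*}$, so that an element of the intersection need not \emph{a priori} come from a single functional on $M$; the trick is to form the difference $\phi-\eta$ and push through the hypothesis. Everything else is bookkeeping on top of Takesaki's $\ell^{1}$-splitting and Hahn-Banach.
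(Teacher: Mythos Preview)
Your proposal is correct and follows essentially the same approach as the paper: both rest on Takesaki's $\ell^{1}$-splitting $M^{*}=M_{*}\oplus_{1}M^{*}_{s}$, Hahn--Banach extension, and the ``difference'' trick $\phi-\eta\in A^{\perp}$ to pass between the F \& M Riesz condition and $A^{*}_{n}\cap A^{*}_{s}=(0)$. The only difference is organizational: the paper proves the two-way equivalence between F \& M Riesz and $A^{*}_{n}\cap A^{*}_{s}=(0)$ directly, then derives the norm identity from the Hahn--Banach extension chain, whereas you run a three-step cycle through uniqueness; the content is identical.
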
 

\begin{proof}   Indeed if  $A_n^* \cap A_s^* = (0)$ suppose that  $\varphi \in M^*$ annihilates $A$.  Then $(\varphi_n)_{|A} = - (\varphi_s)_{|A}$ on $A$, so these equal 0.  
Conversely, if $A$ is an F \& M Riesz space and a functional in $A_n^* \cap A_s^*$ is the restriction of 
both $\varphi_n$ and $\varphi_s$, then $\varphi_n - \varphi_s$ annihilates $A$.
So $\varphi_n$ and $\varphi_s$ also annihilate $A$. 
Thus being an F \& M Riesz algebra is equivalent to  the assertion $A^* \cong A_n^* \oplus A_s^*$.   This is because any Hahn-Banach extension $\psi$ of 
a functional $\varphi \in A^*$ has a normal plus singular decomposition $\varphi_n + \varphi_s$.  Indeed as in \cite[Proposition 1(2)]{Ueda} we have 
$$\| \varphi \| = \| \psi \| = \| \varphi_n \| + \| \varphi_s \| \geq \| (\varphi_n)_A \| + \| (\varphi_s)_A \|  \geq  \| (\varphi_n)_A + (\varphi_s)_A \|  = \| \varphi \|. $$   
So $\Vert \varphi \Vert 
= \Vert \varphi_n \Vert  + \Vert  \varphi_s \Vert$.  
Conversely, the uniqueness of the Lebesgue decomposition implies that $A_n^* \cap A_s^* = (0)$.  \end{proof} 
 
It is well known that the F $\&$ M Riesz  property above implies the Gleason-Whitney property.  In an operator algebraic setting this first appeared in {\rm  \cite{BLue}}.
The proof there  (the last lines of \cite[Theorem 4.1]{BLue}
or p.\ 102 in \cite{BLsurvey}) generalizes vastly--see e.g.\ Remark 1 at the end of \cite{Ueda}, \cite[Section 5]{blueda}, or as  noted recently in \cite{ClouatreH}.  We state it in 
slightly greater generality.

\begin{corollary} \label{par}    F $\&$ M Riesz subspaces have the Gleason-Whitney property (GW1) from  {\rm  \cite{BLue}}:  every Hahn-Banach extension to $M$ 
of a relatively weak* continuous functional $\omega$ on an F $\&$ M Riesz subspace  $A$ of $M$  is normal.  \end{corollary}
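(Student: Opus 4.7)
The plan is straightforward: take an arbitrary Hahn--Banach extension $\psi \in M^*$ of the relatively weak*-continuous functional $\omega$ on $A$ (so $\|\psi\| = \|\omega\|$), and show that its singular part $\psi_s$ must vanish, whence $\psi = \psi_n$ is normal.

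First I would produce \emph{some} normal functional $\psi_0 \in M_*$ that extends $\omega$. Since $A$ is weak*-closed in $M$, the relative weak* topology on $A$ is induced by the quotient $M_*/A_\perp$ (where $A_\perp$ denotes the pre-annihilator of $A$ in $M_*$), so $\omega$ lifts to at least one such $\psi_0$; crucially, no norm control on $\psi_0$ will be needed. Then $\psi - \psi_0 \in A^\perp \subset M^*$, and since $\psi_0$ is normal the expression
\[
\psi - \psi_0 \;=\; (\psi_n - \psi_0) \;+\; \psi_s
\]
is precisely the normal-plus-singular decomposition of $\psi - \psi_0$. Applying the F \& M Riesz hypothesis to $\psi - \psi_0$ forces its singular part $\psi_s$ to annihilate $A$, so $\psi_n|_A = \psi|_A - \psi_s|_A = \omega$. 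In particular $\psi_n$ is itself an extension of $\omega$, giving $\|\psi_n\| \geq \|\omega\|$.

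The proof then closes via the isometric additivity $\|\psi\| = \|\psi_n\| + \|\psi_s\|$ of the Takesaki decomposition of $M^*$ (the same fact that yields the norm equality at the end of Lemma \ref{fm}). Combined with $\|\psi\| = \|\omega\|$ and $\|\psi_n\| \geq \|\omega\|$, this forces $\|\psi_s\| = 0$, so $\psi = \psi_n$ is normal.

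The only step that requires any care is producing the normal lift $\psi_0$, but this is immediate from the weak*-closedness of $A$ via the quotient identification above; once $\psi_0$ is in hand, everything else is bookkeeping with the Lebesgue-type decomposition of $M^*$. I expect no substantive obstacle.
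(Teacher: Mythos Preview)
Your proof is correct and follows essentially the same strategy as the paper's: obtain a normal extension of the weak*-continuous functional, use the F~\&~M Riesz hypothesis to see that the singular part $\psi_s$ vanishes on $A$, and then close with the norm additivity $\|\psi\| = \|\psi_n\| + \|\psi_s\|$. The only cosmetic difference is that you apply the F~\&~M Riesz \emph{definition} directly to $\psi - \psi_0 \in A^\perp$, whereas the paper instead observes that $\varphi_s|_A$ lies in $A^*_n \cap A^*_s$ and invokes the characterization from Lemma~\ref{fm}; these are two phrasings of the same step. One small remark: the paper does not assume $A$ is weak*-closed in its definition of an F~\&~M Riesz space, so you might prefer to cite the extension of a relatively weak*-continuous functional to a normal functional on $M$ as a ``known variant of Hahn--Banach'' (as the paper does) rather than via the predual identification, though the argument goes through either way.
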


\begin{proof}   This follows by a modification of the argument in the last lines of our proof of \cite[Theorem 4.1]{BLue}.  
Let $\varphi$ be a Hahn-Banach extension to $M$ of a normal functional $\omega$ on $A$, and write 
$\varphi = \varphi_n + \varphi_s$.  Let $\xi$ be the restriction to $A$ of $\varphi - \varphi_n$.  This is
relatively weak* continuous.  So by a known variant of the Hahn-Banach theorem it extends to a weak* continuous functional $\psi$ on $M$.
Since  $\varphi_s = \psi$ on $A$ we must have $\xi = 0$, thus 
 $\omega = \varphi_n$ on $A$.  But then $\| \varphi_n \| + \| \varphi_s \| = \| \varphi \| = \| \omega \| \leq \| \varphi_n \|$, so that 
$\varphi_s = 0$, and $\varphi$ is normal.  \end{proof} 

The following consequences are generalizations of results from \cite[Section 5]{blueda}
and others of our earlier papers (for example for the relation with (ii) in the next result  see the remark before 
\cite[Proposition 3.4]{BLvv}).

\begin{lemma} \label{gw2}  Suppose that $A$ is a unital subspace of a von Neumann algebra $M$.  
Consider the conditions: 
\begin{itemize} 
\item [(i)]  {\rm (GW2)}\ There is at most one normal Hahn-Banach extension to $M$ of any functional on $A$. 
\item [(ii)]  There is at most one normal state extension to $M$ of any state on $A$. 
\item [(iii)]   $A + A^*$ is weak* dense in $M$. 
\end{itemize} 
Then {\rm (i)} $\Rightarrow$ 
{\rm (ii)} $\Rightarrow$ {\rm (iii)}, and they all are equivalent  if in addition $A$ is a subalgebra of $M$. \end{lemma}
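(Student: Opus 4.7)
Proof plan. For (i) $\Rightarrow$ (ii), I would observe that because $A$ is unital, any state $\rho$ on $A$ satisfies $\Vert\rho\Vert = \rho(1) = 1$, and any normal state extension $\rho_i$ of $\rho$ to $M$ satisfies $\Vert\rho_i\Vert = \rho_i(1) = 1$. So two normal state extensions are automatically normal Hahn-Banach extensions of $\rho$, and uniqueness is immediate from (i).

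For (ii) $\Rightarrow$ (iii), I would argue by contradiction. If $A + A^*$ were not weak* dense in $M$, there would exist a nonzero $\psi \in M_*$ with $\psi(a) = 0 = \psi(a^*)$ for all $a \in A$. The ``conjugate'' $\psi^\sharp(x) := \overline{\psi(x^*)}$ also lies in $M_*$ and annihilates $A$, so at least one of the hermitian functionals $\eta_1 = \psi + \psi^\sharp$ and $\eta_2 = (\psi - \psi^\sharp)/i$ is a nonzero hermitian normal functional annihilating $A$. Call it $\eta$, and Jordan-decompose $\eta = \eta_+ - \eta_-$ into positive normal pieces with $\Vert\eta\Vert = \Vert\eta_+\Vert + \Vert\eta_-\Vert$. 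Because $1 \in A$ gives $\eta(1) = 0$ and $\Vert\eta_\pm\Vert = \eta_\pm(1)$, we get $\Vert\eta_+\Vert = \Vert\eta_-\Vert =: c$, which must be positive since $\eta \neq 0$. Then $\eta_+/c$ and $\eta_-/c$ are distinct normal states of $M$ with identical restriction to $A$, contradicting (ii).

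For the converse directions, assume $A$ is a subalgebra. I would first observe that (iii) $\Rightarrow$ (ii) follows directly from the hermiticity of normal states: if $\rho_1, \rho_2$ are normal state extensions of a common state on $A$, then $\rho_i(a^*) = \overline{\rho_i(a)}$ forces them to agree on $A^*$ as well, hence on the weak* dense subspace $A + A^*$, hence on $M$ by normality. (This step actually does not need the subalgebra hypothesis.) To obtain the full circle, the remaining task is (ii) $\Rightarrow$ (i), for which I would reduce a general bounded functional to the state case via polar decomposition. Given $\omega \in A^*$ with two normal Hahn-Banach extensions $\varphi_1, \varphi_2 \in M_*$, write Sakai's polar decomposition $\varphi_i = v_i \cdot |\varphi_i|$ with $|\varphi_i| \in M_*^+$, $\Vert|\varphi_i|\Vert = \Vert\omega\Vert$, and $v_i \in M$ a partial isometry. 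The states $|\varphi_i|/\Vert\omega\Vert$ are normal, and the plan is to use the multiplicative structure of $A$ (specifically that products $v_i^* a \in AA \subset M$ can be controlled through approximation within $A$) to show the $|\varphi_i|$ share a common restriction to $A$, invoke (ii), and recover $\varphi_1 = \varphi_2$.

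The main obstacle is precisely this last reduction: without the subalgebra property it is easy to construct counterexamples (cf.\ the upper-triangular $M_2$ example), and the argument must exploit that $A \cdot A \subset A$ in order to push information from the positive parts back to the original functionals. In practice this step is the heart of the noncommutative Gleason–Whitney phenomenon and is where earlier work such as \cite{BLue} and the remark cited before \cite[Proposition 3.4]{BLvv} does the delicate polar/approximation analysis.
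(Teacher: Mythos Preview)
Your arguments for (i) $\Rightarrow$ (ii) and (ii) $\Rightarrow$ (iii) are correct and align with the paper's approach (the paper simply cites \cite[Lemma 5.8]{blueda} for (ii) $\Rightarrow$ (iii), but the Jordan-decomposition argument you spell out is exactly the standard one). Your extra observation that (iii) $\Rightarrow$ (ii) holds even for unital subspaces is also correct and easy.

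The gap is in your final step. Your plan for (ii) $\Rightarrow$ (i) via polar decomposition does not work as written: the partial isometries $v_i$ lie in $M$, not in $A$, so $v_i^* a$ belongs to $M\cdot A$, not to $A\cdot A$ as you assert. There is consequently no evident mechanism for concluding that $|\varphi_1|$ and $|\varphi_2|$ agree on $A$ from the fact that $\varphi_1$ and $\varphi_2$ agree on $A$; the absolute-value operation simply does not respect restriction to a subspace. Even granting $|\varphi_1|_{|A} = |\varphi_2|_{|A}$ and then $|\varphi_1| = |\varphi_2|$ via (ii), you would still need $v_1 = v_2$, which you do not address. Your closing sentence effectively concedes that this is where the real content lies and defers to the literature --- but then the proposal does not itself constitute a proof of the implication.

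The paper closes the circle differently: it goes (iii) $\Rightarrow$ (i) directly, citing the other part of the proof of \cite[Lemma 5.8]{blueda} (after normalizing to $\|\varphi\| = 1$). That argument uses the weak* density of $A + A^*$ together with the multiplicative structure of $A$ in an essential way, rather than passing through (ii). Since you already have (ii) $\Leftrightarrow$ (iii), the cleanest repair of your outline is to abandon the polar-decomposition sketch and instead supply the genuine (iii) $\Rightarrow$ (i) argument; the route through the positive parts $|\varphi_i|$ as you have described it does not get there.
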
  

\begin{proof}   Clearly  {\rm (i)} $\Rightarrow$ {\rm (ii)}, and that   (ii) $\Rightarrow$ {\rm (iii)} follows from the last paragraph of the proof of \cite[Lemma 5.8]{blueda}.   If $A$ is a subalgebra then   (iii) $\Rightarrow$ (i) by the other part of 
the proof of \cite[Lemma 5.8]{blueda}
(we note that in that proof  $\Vert  \varphi \Vert = 1$).
 \end{proof}

\begin{corollary} \label{co6}   Suppose that $A$ is an  F $\&$ M Riesz subspace of a von Neumann algebra $M$ such that  $A + A^*$ is weak* dense in $M$.     If   $\varphi \in M^*$
annihilates $A + A^*$ then $\varphi$ is singular.   Any normal functional on $M$ is the unique Hahn-Banach extension 
of its restriction to $A+ A^*$, and  in particular is normed by $A+ A^*$.  
\end{corollary}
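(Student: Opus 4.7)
My plan is to establish the three assertions in the order stated. For the first assertion, let $\varphi \in M^*$ annihilate $A + A^*$ and write the Takesaki decomposition $\varphi = \varphi_n + \varphi_s$. Since $\varphi|_A = 0$, the F $\&$ M Riesz property of $A$ gives that both $\varphi_n$ and $\varphi_s$ annihilate $A$. To handle $A^*$, I would introduce the conjugate-linear involution $\psi \mapsto \psi^\sharp$ on $M^*$ defined by $\psi^\sharp(x) = \overline{\psi(x^*)}$; since the normal and singular functionals are each closed under this involution, we have $(\varphi^\sharp)_n = (\varphi_n)^\sharp$ and $(\varphi^\sharp)_s = (\varphi_s)^\sharp$. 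Because $\varphi$ also annihilates $A^*$, the functional $\varphi^\sharp$ annihilates $A$, so applying F $\&$ M Riesz again yields that $(\varphi_n)^\sharp$ and $(\varphi_s)^\sharp$ annihilate $A$; equivalently, $\varphi_n$ and $\varphi_s$ annihilate $A^*$. Thus $\varphi_n$ annihilates $A + A^*$, and since $A + A^*$ is weak* dense in $M$ and $\varphi_n$ is weak* continuous, $\varphi_n = 0$. Hence $\varphi = \varphi_s$ is singular.

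For the second assertion, fix a normal functional $\omega$ on $M$, and let $\psi \in M^*$ be any Hahn-Banach extension of $\omega|_{A+A^*}$, so $\|\psi\| = \|\omega|_{A+A^*}\|$. The difference $\psi - \omega$ annihilates $A + A^*$, so by the first assertion it is singular. Writing $\psi = \psi_n + \psi_s$, it follows that $\psi_n - \omega$ is both normal and singular, hence zero, so $\psi_n = \omega$ and $\psi = \omega + \psi_s$. By Takesaki's theorem,
\[
\|\omega|_{A+A^*}\| = \|\psi\| = \|\psi_n\| + \|\psi_s\| = \|\omega\| + \|\psi_s\| \geq \|\omega\| \geq \|\omega|_{A+A^*}\|,
\]
forcing equality throughout. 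In particular $\|\psi_s\| = 0$, so $\psi = \omega$, proving uniqueness.

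The third assertion is a direct byproduct: the chain of equalities above also yields $\|\omega\| = \|\omega|_{A+A^*}\|$, i.e.\ $\omega$ is normed by $A + A^*$. The only real subtlety I anticipate is the bookkeeping in Step 1 around the conjugate-linear involution on $M^*$ and the fact that it commutes with the Takesaki decomposition; this is standard but should be stated carefully. Everything else is a formal consequence of the F $\&$ M Riesz hypothesis, weak* density of $A + A^*$, and the additivity of the norm under the normal-plus-singular decomposition.
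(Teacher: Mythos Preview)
Your proof is correct and follows essentially the same approach as the paper: the paper observes that $A^*$ is also an F~\&~M~Riesz subspace (using that the involution $\psi\mapsto\psi^*$ preserves normality and singularity, which is exactly your $\sharp$-argument), deduces the first assertion as in \cite[Corollary~3.5]{BLue}, and obtains the remaining assertions via the norm-additivity of the Takesaki decomposition as in \cite[Theorem~4.2]{BLue}. You have simply spelled out in full the details that the paper delegates to those references.
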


\begin{proof} If $A$  is an  F $\&$ M Riesz subspace then so is $A^*$ using the fact that $\psi$ is normal (resp.\ singular)
iff $\psi^*$ is normal (resp.\ singular). The first assertion follows from this, as in \cite[Corollary 3.5]{BLue}.
This implies the last assertion  as in \cite[Theorem 4.2]{BLue}.   \end{proof} 

\begin{corollary} \label{co7}    If $A$ is  an  F $\&$ M Riesz subspace  of a von Neumann algebra
$M$ such that  $A + A^*$ is weak* dense in $M$, then  ${\rm Ball}(A + A^*)$ is weak* dense in ${\rm Ball}(M)$.
\end{corollary}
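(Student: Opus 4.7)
The plan is to deduce this from the last assertion of Corollary \ref{co6}, which tells us that every normal functional $\varphi \in M_*$ is normed by its restriction to $A + A^*$, i.e.\ $\Vert \varphi \Vert_{M_*} = \Vert \varphi_{|A + A^*} \Vert$. This is exactly the hypothesis one needs to run the standard bipolar/Hahn--Banach separation argument in the weak* topology.

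In detail, I would first let $K$ denote the weak*-closure of ${\rm Ball}(A + A^*)$ in $M$. Because ${\rm Ball}(A + A^*)$ is convex and absolutely convex, so is $K$, and clearly $K \subseteq {\rm Ball}(M)$. The goal becomes to show the reverse inclusion. Suppose for contradiction that there exists $x_0 \in {\rm Ball}(M) \setminus K$. Then since $K$ is weak*-closed, convex and balanced, the Hahn--Banach separation theorem applied in the locally convex space $(M, \sigma(M, M_*))$ produces a weak*-continuous linear functional on $M$ — which by definition is a normal functional $\varphi \in M_*$ — together with a scalar $\lambda > 1$ such that $|\varphi(k)| \leq 1$ for all $k \in K$, while $|\varphi(x_0)| > \lambda > 1$.

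The first inequality, restricted to the weak*-dense subset ${\rm Ball}(A + A^*)$ of $K$, gives $\Vert \varphi_{|A + A^*} \Vert \leq 1$. Applying the last assertion of Corollary \ref{co6}, we conclude $\Vert \varphi \Vert_{M_*} = \Vert \varphi_{|A + A^*} \Vert \leq 1$. But then $|\varphi(x_0)| \leq \Vert \varphi \Vert \, \Vert x_0 \Vert \leq 1$, contradicting $|\varphi(x_0)| > 1$. Hence no such $x_0$ exists and $K = {\rm Ball}(M)$, as required.

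There is no real obstacle here; the content has been packaged into Corollary \ref{co6} in the preceding paragraphs, and the rest is the textbook bipolar theorem. The only small point worth flagging is the need to invoke the weak* Hahn--Banach separation theorem for \emph{absolutely} convex sets (so that we can pass from a real separating inequality to $|\varphi(\cdot)| \leq 1$), which is legitimate because $A + A^*$ is a linear subspace and hence ${\rm Ball}(A + A^*)$ and its weak*-closure are balanced.
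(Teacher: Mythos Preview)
Your proof is correct and is exactly the standard bipolar/separation argument one expects here; the paper itself gives no explicit proof of Corollary~\ref{co7}, treating it as an immediate consequence of the fact (from Corollary~\ref{co6}) that normal functionals are normed by $A+A^*$, which is precisely the input you used.
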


Actually a stronger `Kaplansky density' statement holds for F $\&$ M Riesz algebras, as in the remark after  \cite[Corollary 5.7]{blueda} referring to Theorem 3.2 there. 
The last lines of the proof of the latter theorem if $A$ is unital use Lemma 3.1 there, whose generality includes the present setting.

 The full `Gleason-Whitney  property' is a combination of both GW1 and GW2: 

\begin{corollary} \label{GWt} {\rm (\cite[Corollary 5.9]{blueda})}\    Suppose that $A$ is a unital F $\&$ M Riesz space in a von Neumann algebra
$M$.   Then $A$ has (GW2) if and only if every relatively weak* continuous  functional on $A$ has a unique  Hahn-Banach extension to $M$,  and if and only if every 
relatively weak* continuous 
functional on $A$ has a unique normal Hahn-Banach extension to $M$. 
(Recall if $A$ is also an algebra then (GW2) is equivalent to  $A+A^*$ being weak* dense in $M$.) \end{corollary}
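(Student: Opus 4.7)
My plan is to prove the three-way equivalence by using Corollary \ref{par} (GW1), which holds because $A$ is F \& M Riesz, to bridge ``Hahn-Banach extensions'' and ``normal Hahn-Banach extensions'' of relatively weak* continuous functionals. The three assertions are: (a) (GW2), i.e.\ at most one normal Hahn-Banach extension of any functional on $A$; (b) unique Hahn-Banach extension of each relatively weak* continuous functional; (c) unique normal Hahn-Banach extension of each relatively weak* continuous functional. A Hahn-Banach extension exists in all cases by the ordinary Hahn-Banach theorem, so in (b) and (c) ``unique'' may be read as ``exists and is unique''.

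For (b) $\Leftrightarrow$ (c) I would invoke Corollary \ref{par}: since $A$ is an F \& M Riesz space, \emph{every} Hahn-Banach extension to $M$ of a relatively weak* continuous functional on $A$ is automatically normal. Thus the set of Hahn-Banach extensions and the set of normal Hahn-Banach extensions of such a functional coincide, and uniqueness of one is uniqueness of the other.

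For (a) $\Rightarrow$ (c), note that (a) says any functional on $A$ has at most one normal Hahn-Banach extension, so in particular this applies to relatively weak* continuous functionals, giving (c) (existence being provided by Hahn-Banach plus GW1 as above). For (c) $\Rightarrow$ (a), suppose $\varphi \in A^*$ has two normal Hahn-Banach extensions $\psi_1, \psi_2 \in M_*$. Then $\varphi = \psi_1|_A = \psi_2|_A$ is the restriction of a normal, hence weak* continuous, functional on $M$, so $\varphi$ is relatively weak* continuous on $A$. By (c) the normal Hahn-Banach extension of $\varphi$ is unique, forcing $\psi_1 = \psi_2$, which is (a).

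I do not anticipate a serious obstacle; the main delicate point is simply to keep straight that ``relatively weak* continuous'' is automatically satisfied by any restriction of a normal functional, which is what lets (c) $\Rightarrow$ (a) go through. The parenthetical final remark is a direct citation of Lemma \ref{gw2}: when $A$ is an algebra, (GW2) is equivalent to $A + A^*$ being weak* dense in $M$.
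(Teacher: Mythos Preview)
Your proof is correct and is precisely the argument the paper intends: the corollary is stated with a citation to \cite[Corollary 5.9]{blueda} and no proof, but the natural proof from the surrounding material is exactly your combination of Corollary~\ref{par} (GW1) to identify Hahn--Banach extensions with normal ones for relatively weak* continuous functionals, together with the observation that any functional admitting a normal extension is automatically relatively weak* continuous, which collapses (GW2) to the relatively weak* continuous case. The parenthetical remark is, as you say, just Lemma~\ref{gw2}.
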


There is an obvious variant of the last result with the word `functional' replaced by `state'.

\begin{corollary} \label{ucc}   {\rm (Cf.\ \cite[Section 3]{BLue} and \cite[Proposition 3.4]{BLvv})}\ Suppose that $A$ is a unital  F $\&$ M Riesz space in a von Neumann algebra
$M$. 
There is a  completely contractive (or equivalently, completely positive) normal extension to $M$ of a 
relatively weak* continuous completely  contractive unital $\Phi : A \to B(H)$.   Indeed any 
  contractive unital extension to $M$ of  a contractive unital relatively weak* continuous $\Phi : A \to B(H)$ is necessarily normal.  
  If  $A+A^*$ is weak* dense in $M$ then such extensions are 
  unique.
  \end{corollary}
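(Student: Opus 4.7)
The plan is to separate the statement into three claims and dispatch each using the Gleason-Whitney machinery already developed: (i) existence of a completely contractive normal extension when $\Phi$ is completely contractive; (ii) automatic normality of any contractive unital extension; and (iii) uniqueness when $A+A^{*}$ is weak* dense in $M$. The conceptual engine is Corollary \ref{par} (GW1), applied to scalar functionals obtained by pairing with vectors in $H$.

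For (i), I would first invoke Wittstock's extension theorem (the matrix-valued Hahn--Banach theorem) to produce a completely contractive linear extension $\tilde\Phi: M \to B(H)$ of $\Phi$. Since $\tilde\Phi$ is unital and completely contractive on the $C^{*}$-algebra $M$, it is automatically completely positive by the standard fact that unital completely contractive maps out of a $C^{*}$-algebra are completely positive; this also justifies the parenthetical equivalence "completely contractive (or equivalently, completely positive)" in the statement.

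For (ii) — the heart of the corollary — let $\tilde\Phi:M\to B(H)$ be any contractive unital extension of the relatively weak* continuous $\Phi$. For each pair of vectors $\xi,\eta\in H$, the functional $\varphi_{\xi,\eta}(x):=\langle \tilde\Phi(x)\xi,\eta\rangle$ is a Hahn--Banach extension to $M$ of the functional $a\mapsto\langle\Phi(a)\xi,\eta\rangle$ on $A$, and the restriction to $A$ is relatively weak* continuous because $\Phi$ is. By Corollary \ref{par}, every Hahn--Banach extension to $M$ of a relatively weak* continuous functional on the F \& M Riesz space $A$ is normal; hence each $\varphi_{\xi,\eta}$ is normal on $M$. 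Since normality of a bounded map $M\to B(H)$ is tested precisely against such vector functionals, $\tilde\Phi$ is normal. This simultaneously completes (i), since Wittstock's extension is itself contractive and unital.

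For (iii), suppose $A+A^{*}$ is weak* dense in $M$, and let $\tilde\Phi_{1},\tilde\Phi_{2}$ be two unital contractive extensions to $M$. Each is a unital contractive map on a $C^{*}$-algebra, hence positive and consequently adjoint-preserving: $\tilde\Phi_{i}(x^{*})=\tilde\Phi_{i}(x)^{*}$. Thus both extensions agree with $\Phi$ on $A$ and with $a\mapsto\Phi(a)^{*}$ on $A^{*}$, so they coincide on $A+A^{*}$. Step (ii) shows both are normal, so weak* density of $A+A^{*}$ in $M$ forces $\tilde\Phi_{1}=\tilde\Phi_{2}$. The main obstacle is the normality step in (ii), but it is cleanly dispatched by reducing the operator-valued assertion to the scalar Gleason-Whitney statement of Corollary \ref{par}.
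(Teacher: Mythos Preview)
Your overall strategy matches the paper's, but step (ii) contains a genuine gap. You assert that for \emph{every} pair $\xi,\eta\in H$ the functional $\varphi_{\xi,\eta}(x)=\langle\tilde\Phi(x)\xi,\eta\rangle$ is a Hahn--Banach extension of its restriction to $A$. This is false in general: a Hahn--Banach extension must preserve the norm, and for $\xi\neq\eta$ there is no reason for $\|\varphi_{\xi,\eta}\|_{M^*}$ to equal $\|\varphi_{\xi,\eta}|_A\|_{A^*}$. For a concrete failure, take $M=M_2(\mathbb{C})$, $A$ the upper triangular matrices, $\Phi$ the inclusion into $B(\mathbb{C}^2)$, and $\xi=e_1$, $\eta=e_2$; then $\varphi_{\xi,\eta}|_A=0$ while $\varphi_{\xi,\eta}$ has norm $1$ on $M$. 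Since Corollary~\ref{par} applies only to norm-preserving extensions, your invocation of it is illegitimate for off-diagonal pairs.

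The paper repairs this by restricting to the diagonal case $\xi=\eta=\zeta$ with $\|\zeta\|=1$. Unitality then forces $\langle\Phi(1)\zeta,\zeta\rangle=1$, so the functional $a\mapsto\langle\Phi(a)\zeta,\zeta\rangle$ has norm exactly $1$ on $A$, and likewise $\langle\tilde\Phi(\cdot)\zeta,\zeta\rangle$ has norm $1$ on $M$; hence the latter genuinely is a Hahn--Banach extension and Corollary~\ref{par} applies. Having established normality of $\langle\tilde\Phi(\cdot)\zeta,\zeta\rangle$ for every unit vector $\zeta$, one then recovers normality of all $\varphi_{\xi,\eta}$ by polarization, and hence normality of $\tilde\Phi$. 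Your steps (i) and (iii) are fine and agree with the paper's argument.
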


\begin{proof} The idea from \cite[Proposition 3.4]{BLvv}:  Suppose that $\Psi  : M \to B(H)$ is a  contractive unital 
 extension of relatively weak* continuous contractive unital $\Phi : A \to B(H)$. 
The norm of $\langle \Phi( \cdot ) \zeta , \zeta \rangle$ is 1  if $\| \zeta \| = 1$.  Hence 
 $\langle \Psi( \cdot ) \zeta , \zeta \rangle$ is a Hahn-Banach 
extension.   So it is weak* continuous by GW1 (Corollary \ref{par}).   By polarization, $\Psi$ is weak* continuous.
  If  $A+A^*$ is weak* dense in $M$ then since $\Psi$ is completely positive and normal it   is the 
 necessarily unique weak* continuous extension of $\Phi$. 
\end{proof} 

\begin{corollary} \label{bdy}  Suppose that $A$ is a unital  F $\&$ M Riesz space in a von Neumann algebra 
$M$.    If  $A+A^*$ is weak* dense in $M$ then 
every $*$-representation of $M$ which is relatively weak* continuous on $A$ is a boundary representation of $A$ in the sense of {\rm 4.1.11} in 
\cite{BLM}, and is normal.  (This is Arveson's notion of boundary representation with the irreducibility requirement dropped; more recently 
referred to as the `unique extension property'.)   \end{corollary}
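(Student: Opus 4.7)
The plan is to deduce both assertions directly from Corollary \ref{ucc} applied to the restriction $\Phi := \pi|_A : A \to B(H)$ of the given $*$-representation $\pi : M \to B(H)$. Since $\pi$ is a unital $*$-homomorphism it is automatically completely contractive, and by hypothesis $\Phi$ is relatively weak*-continuous; thus $\Phi$ meets the hypotheses of Corollary \ref{ucc}.

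For normality: $\pi$ itself is a contractive unital extension of $\Phi$ to $M$, so the second sentence of Corollary \ref{ucc} (which in turn rests on the GW1 property of Corollary \ref{par}) immediately yields that $\pi$ is normal.

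For the unique extension property, let $\Psi : M \to B(H)$ be any unital completely contractive (equivalently, unital completely positive) extension of $\Phi$ to $M$. Corollary \ref{ucc} gives that $\Psi$ is again normal, and the final clause of that corollary, which applies because $A + A^*$ is weak*-dense in $M$, gives uniqueness of such extensions. Hence $\Psi = \pi$, which is precisely the statement that $\pi$ is a boundary representation of $A$ in the sense of \cite{BLM}~4.1.11 with the irreducibility requirement dropped.

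There is essentially no obstacle: the statement is a direct packaging of Corollary \ref{ucc}, whose content (Hahn--Banach extensions of weak*-continuous functionals on an F \& M Riesz space are forced to be normal, and are unique once $A+A^*$ is weak*-dense) has already absorbed all of the analytic work.
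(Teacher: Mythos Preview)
Your proof is correct and is precisely the unpacking of the paper's own proof, which simply reads ``Immediate from the last corollary'' (i.e.\ Corollary~\ref{ucc}). You apply Corollary~\ref{ucc} to $\Phi=\pi|_A$ exactly as intended: $\pi$ is a contractive unital extension of $\Phi$ and hence normal, and any other unital completely contractive extension is likewise normal and then unique by the weak*-density of $A+A^*$.
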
 

\begin{proof}  Immediate from the last corollary.    
\end{proof} 

\begin{corollary} \label{pce} {\rm (Cf.\ \cite[Section 3]{BLvv} and \cite{BLvnce})}\  Suppose that $A$ is a unital  F $\&$ M Riesz space in a von Neumann subalgebra 
$M \subset B(H)$.  Suppose that $D$ is a von Neumann subalgebra  of $M$ which is contained in $A$ with $DAD \subset A$ (so $A$ is a $D$-bimodule). 
  If $\Phi : A \to D$ is a  relatively weak* continuous  contractive projection onto $D$, then $\Phi$ is completely  contractive, and extends  to a  
complete contraction $\Psi : M \to B(H)$, and this extension is necessarily normal.   
If  $A+A^*$ is weak* dense in $M$ then $\Psi$ is a normal conditional expectation onto $D$, and is unique.  \end{corollary}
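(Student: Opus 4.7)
The plan is to parallel Corollary \ref{ucc}, with the extra ingredients that the a priori only contractive $\Phi$ must first be promoted to a completely contractive map, and that the extension must be shown to land inside $D$.

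First I would show that $\Phi$ is completely contractive. Since $\Phi$ is a unital contractive projection from the unital $D$-bimodule $A$ (with $DAD\subset A$) onto the $C^*$-subalgebra $D$, a Tomiyama-style argument, in the operator-space formulation carried out in \cite{BLvv} and \cite{BLvnce}, forces $\Phi$ to be automatically completely contractive, and in fact a $D$-bimodule map.  This is the step I expect to be the main obstacle, since it requires the full operator-space theory of contractive projections onto $C^*$-subalgebras in a nonselfadjoint setting; everything after it will follow rather formally from the F \& M Riesz apparatus already developed.

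Next, apply Wittstock's extension theorem for completely contractive maps into $B(H)$ (with $D\subset M \subset B(H)$) to extend $\Phi$ to a complete contraction $\Psi:M\to B(H)$.  Being a unital complete contraction into $B(H)$, $\Psi$ is automatically completely positive, and in particular $*$-preserving.  Normality of $\Psi$ now follows by invoking Corollary \ref{ucc}: since $\Phi$ is relatively weak* continuous, GW1 (Corollary \ref{par}) forces every contractive unital extension of $\Phi$ to $M$ to be normal.

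Finally, suppose $A+A^*$ is weak* dense in $M$.  Uniqueness of $\Psi$ is then immediate from GW2 (Lemma \ref{gw2}), exactly as in Corollary \ref{ucc}.  To see that $\Psi$ is a conditional expectation onto $D$, use the $*$-preserving property of $\Psi$ to compute $\Psi(A^*)=\Psi(A)^* = \Phi(A)^*\subset D^* = D$, whence $\Psi(A+A^*)\subset D$; since $D$ is weak* closed and $\Psi$ is normal, we conclude $\Psi(M)\subset D$.  Combined with $\Psi|_D = \mathrm{id}_D$ (inherited from $\Phi$), this makes $\Psi$ a norm-one idempotent of $M$ onto $D$, hence by Tomiyama's theorem a conditional expectation.
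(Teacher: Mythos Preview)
Your proposal is correct and follows essentially the same route as the paper: promote $\Phi$ to a completely contractive $D$-bimodule map, extend to $M$, invoke the F \& M Riesz machinery (Corollary \ref{ucc}) for normality, and then use $*$-preservation plus weak* density to force the range into $D$. The paper is slightly more specific on the first step, citing \cite[Lemma 3.2]{BMag} together with a computation in \cite{BLvv} to obtain the $D$-module property and hence complete contractivity, so this step is less of an obstacle than you anticipate; otherwise your argument and the paper's coincide.
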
 

\begin{proof}  A contractive projection from $A$ onto  $D$  is a $D$-module map 
and hence is completely  contractive.  To see these  use \cite[Lemma  3.2]{BMag}, and the calculation two paragraphs above 
Lemma 1.1 in \cite{BLvv}).   Let $\Psi  : M \to B(H)$ be the completely contractive  (normal) extension 
in Corollary \ref{ucc}.   The rest is as in the proof of that result, and we have $\Psi(M) \subset D$ if  $A+A^*$ is weak* dense in $M$. \end{proof}

{\bf Remark.}  The above observations complement the work done in the later sections of \cite{BLvnce} where we studied conditions under which weak* continuous   contractive homomorphisms
$\Phi : A \to D$ extend to normal conditional expectations $M \to D$ (see for example \cite[Theorem 6.3]{BLvnce} in the case that  
 $M$ has a faithful normal tracial state.) 
 
 \medskip
 
 We remark that several results in this section may be generalized beyond the case that $M$ is a von Neumann algebra.  Indeed  some only use 
the fact that $M$ has a Lebesgue decomposition, and in some places also that $M$ is selfadjoint).

\subsection{Riesz  approximable  algebras} \label{Rapp} 

In this section we give many applications that not only apply to the semi-$\sigma$-finite subdiagonal algebras of 
Sections 2--4, but also in a more general setting that we call 
Riesz  approximable  algebras.   Moreover they apply in settings like  the Hardy space of the  half-plane 
 where the weight on $M$ is not semifinite on $D$, and to other much more general settings.   Indeed many of the results 
 here do not reference the specific weight on $M$,
their statements essentially only involving the Banach space structure of $A$ and $M$.

Suppose that $M$ has weak* closed $*$-subalgebras $M_i$  whose union is weak* dense in $M$, and that $A$ is a subalgebra (resp.\ subspace) of $M$ such that 
 $A_i = A \cap M_i$ is a F \& M Riesz algebra (resp.\ space)
 in $M_i$.
  We will also assume that 
the union of the $A_i$ is relatively weak* dense in $A$.   We also require that  the restriction of any positive singular functional on $M$ 
to $M_i$ is singular on $M_i$ (the word `compatible' is used for this property in \cite{ClouatreH}).   Note  that hereditary subalgebras have the latter property, as is easily seen 
using \cite[Theorem III.3.8]{tak1}.

We call an algebra (resp.\ space) $A$ with the properties in the last paragraph a {\em Riesz  approximable algebra} (resp.\ {\em space}) in $M$. 
One may then consider  Riesz  approximable algebras that are also subdiagonal, for example a weak* closed unital Riesz  approximable algebra in $M$ for which the $M_i$ are $\sigma$-finite and the 
subalgebras $A_i$ above are maximal subdiagonal in $M_i$ (which implies that $A_i$ is a F \& M Riesz algebra by \cite[Section 5]{blueda}).   These will include the semi-$\sigma$-finite maximal 
subdiagonal algebras in previous sections.

\begin{lemma} Every semi-$\sigma$-finite subdiagonal algebra is a Riesz  approximable  algebra. 
  \end{lemma}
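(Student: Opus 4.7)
The plan is to unpack the four conditions in the definition of Riesz approximable algebra and verify each one using the machinery of Theorem \ref{isma}. Let $A$ be semi-$\sigma$-finite subdiagonal in $M$ with respect to $\omega$. By Theorem \ref{isma}, there is an increasing net of $\omega$-finite projections $e_i \in D \cap M_\omega$ with $e_i \nearrow 1$, and the compressions $M_i = e_i M e_i$ and $A_i = e_i A e_i$ realise $A_i$ as a maximal subdiagonal algebra in the $\sigma$-finite von Neumann algebra $M_i$. These $M_i$ will be my candidate subalgebras in the definition of Riesz approximable.

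First I would record that each $M_i$ is a weak*-closed $*$-subalgebra of $M$ (as a corner by a projection), and that $A_i = A \cap M_i$: the inclusion $e_i A e_i \subset A \cap M_i$ uses that $e_i \in D \subset A$, while the reverse uses that any $x \in A \cap e_i M e_i$ satisfies $x = e_i x e_i$. Weak* density of $\bigcup_i M_i$ in $M$ and of $\bigcup_i A_i$ in $A$ both follow from the SOT (hence weak*) convergence $e_i x e_i \to x$, which in turn follows from $e_i \nearrow 1$.

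Next I would establish the F \& M Riesz property of each $A_i$ inside $M_i$. Since $A_i$ is a maximal subdiagonal algebra in the $\sigma$-finite $M_i$ by Theorem \ref{isma}(3), this is exactly the content of the results of \cite[Section 5]{blueda} on the Lebesgue decomposition for maximal subdiagonal algebras in the $\sigma$-finite case, together with Lemma \ref{fm}. No new work is needed at this step beyond invoking the existing theorem.

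The step I expect to require the most care is the compatibility condition: that every positive singular functional $\varphi$ on $M$ restricts to a singular functional on each $M_i$. Here I would follow the hint given just before the lemma, namely that hereditary subalgebras enjoy this property by \cite[Theorem III.3.8]{tak1}, since $e_i M e_i$ is hereditary in $M$. Concretely, using the characterisation that $\varphi$ is singular iff every nonzero projection in $M$ dominates a nonzero projection on which $\varphi$ vanishes, I would take an arbitrary nonzero projection $p \leq e_i$ in $M_i$, view it as a projection in $M$, extract a nonzero subprojection $q \leq p$ with $\varphi(q) = 0$, and observe $q \in M_i$ since $q \leq e_i$. This shows $\varphi_{|M_i}$ is singular on $M_i$. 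With all four defining properties verified, $A$ is Riesz approximable. The main conceptual obstacle is therefore not really an obstacle at all: all the heavy lifting has already been done in Theorem \ref{isma}, in \cite[Section 5]{blueda}, and in the hereditary-subalgebra remark; the present lemma is an assembly of these pieces.
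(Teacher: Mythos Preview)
Your proposal is correct and follows essentially the same approach as the paper's proof: verify weak* density of $\bigcup_i M_i$ and $\bigcup_i A_i$ via $e_i \nearrow 1$, invoke \cite[Section 5]{blueda} for the F \& M Riesz property of the maximal $\sigma$-finite subdiagonal $A_i$, and use that $M_i = e_i M e_i$ is hereditary for the compatibility of singular functionals. You spell out a few details the paper leaves implicit (notably the verification that $A_i = A \cap M_i$ and the projection-level argument for singularity), but the strategy is identical.
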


\begin{proof} 
 Indeed as before, since  $e_i \nearrow 1$ strongly, $\cup_i \, M_i$ and $\cup_i \, A_i$ are weak* dense in $M$ and $A$ respectively. 
The $A_i$ are F \& M Riesz algebras by \cite[Theorem 5.3]{blueda}.   
Also, $M_i = e_i M e_i$ is a hereditary subalgebra of $M$, so as mentioned above 
 the restriction of any positive singular functional on $M$ to $M_i$ is singular on $M_i$.  
\end{proof} 

 {\bf Remark.} One of the important ingredients of the last proof is that subdiagonal algebras in a  $\sigma$-finite von Neumann algebra are F \& M Riesz algebras, or equivalently 
have a Lebesgue decomposition. 
This is proved in \cite[Theorem 5.3]{blueda} by following Ueda's strategy, which  follows Ando's strategy, of deducing the Lebesgue decomposition from an Amar-Lederer peak 
set type result.   We take the opportunity to make two related comments.  First, the proof of Proposition 2.3 in  \cite{blueda} needs correction (the result itself is true).  If $B$ is unital then in the
proof that (iii) implies (i) if $a_n \in B_{\rm sa}$ one may appeal to \cite[Proposition 3.11.9]{P} to see that $1-q$ is open.
If $b_n \searrow q$ then let $A = C^*(\{ 1, b_n \})$, a separable $C^*$-algebra with $q \in A^{**}$.   In $A^{**}$ the projection $q$ (resp.\ $1-q$) is compact so peak
(resp.\  is open, so is a support projection) with respect to $A$ by results  in the separable case from the first two papers 
of the first author and Read cited there.   In the last paragraph of the proof one may again appeal to the same Proposition
of the first author and Read (Proposition  6.4 (1) in  the 2013 paper) to see 
that a projection in $B^{**}$ which is peak with respect to $B^1$, is peak with respect to $B$. 

Amar and Lederer proved for $H^\infty$ of the disk that every `Lebesgue-null'  closed 
  sets  $E$ in the maximal ideal space $X$ of $M = L^\infty$, is contained in a null peak set in $X$ for $H^\infty$.     Ueda proved a noncommutative version of this for a restricted class of `closed sets' (rather, closed projections), namely the supports of singular functionals on $M$.    In \cite{blueda} we showed that 
this result of Ueda is still valid in a $\sigma$-finite von Neumann algebra $M$. 
Recently the first author and Clou\^atre have obtained the full noncommutative Amar and Lederer  theorem 
(i.e.\ for all closed null projections, as opposed to supports of singular functionals) for subdiagonal algebras in such a  von Neumann algebra $M$.

\bigskip

The Lebesgue decomposition, F \& M Riesz, and Gleason-Whitney theorems all work for Riesz  approximable  algebras.

\begin{theorem} \label{Lebde} A Riesz  approximable algebra  (resp.\ space) 
$A$ is an F \& M Riesz algebra  (resp.\ space) 
and has a unique Lebesgue decomposition.   Thus it has all the properties of 
F \& M Riesz spaces from the last section, such as the Gleason-Whitney theorems and corollaries there.  
In particular, semi-$\sigma$-finite subdiagonal algebras have all the properties mentioned in statements of the results in the last section.
\end{theorem}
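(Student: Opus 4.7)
The plan is to verify the F \& M Riesz property directly from the definition of Riesz approximable, by restricting an annihilating functional to each subalgebra $M_i$, applying the F \& M Riesz property there, and then using weak*-density to lift back up to $A$.

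To this end, let $\varphi \in M^*$ with $\varphi_{|A} = 0$, and let $\varphi = \varphi_n + \varphi_s$ be its Lebesgue decomposition in $M^*$. Fix $i$ and consider the restrictions to $M_i$. Since $M_i$ is a weak*-closed $*$-subalgebra of $M$, the restriction $(\varphi_n)_{|M_i}$ remains normal. To see that $(\varphi_s)_{|M_i}$ is singular on $M_i$, I would first write $\varphi_s$ as a complex linear combination of positive singular functionals on $M$; this uses the standard fact that the Jordan decomposition of a self-adjoint singular functional preserves singularity. The compatibility hypothesis built into the definition of a Riesz approximable algebra then guarantees that each such positive piece restricts to a singular functional on $M_i$, and hence so does $(\varphi_s)_{|M_i}$. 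Thus $\varphi_{|M_i} = (\varphi_n)_{|M_i} + (\varphi_s)_{|M_i}$ is the Lebesgue decomposition of $\varphi_{|M_i}$ in $M_i^*$.

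Because $\varphi$ annihilates $A$, its restriction $\varphi_{|M_i}$ annihilates $A_i = A \cap M_i$. Since $A_i$ is F \& M Riesz in $M_i$ by hypothesis, both $(\varphi_n)_{|M_i}$ and $(\varphi_s)_{|M_i}$ annihilate $A_i$. Running this over all $i$, we conclude that $\varphi_n$ vanishes on $\bigcup_i A_i$, which is weak*-dense in $A$. Since $\varphi_n$ is itself weak*-continuous, $\varphi_n$ annihilates $A$; consequently $\varphi_s = \varphi - \varphi_n$ also annihilates $A$. This establishes the F \& M Riesz property for $A$. The uniqueness of the Lebesgue decomposition for functionals in $A^*$ then follows immediately from Lemma \ref{fm}, as do all the Gleason-Whitney type consequences recorded in the previous subsection. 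Finally, the statement about semi-$\sigma$-finite subdiagonal algebras is immediate from the preceding lemma identifying them as Riesz approximable.

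The main obstacle is the verification that $(\varphi_s)_{|M_i}$ is singular on $M_i$, since singularity is not an automatically hereditary property; the argument must route through the Jordan decomposition of singular functionals into positive singular parts, and it is precisely at this point that the compatibility hypothesis in the definition of Riesz approximable earns its keep. Note that the argument nowhere uses the algebra structure on $A$, so it applies uniformly to the ``space'' version of the statement as well.
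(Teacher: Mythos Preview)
Your proof is correct and follows essentially the same route as the paper's: reduce to the $A_i$ via the compatibility hypothesis (restriction of singular is singular), apply the F \& M Riesz property there, and then push back to $A$ using weak*-density of $\bigcup_i A_i$ together with normality of $\varphi_n$. The only cosmetic difference is that the paper works with the equivalent criterion $A_n^* \cap A_s^* = (0)$ from Lemma~\ref{fm} rather than the annihilator formulation you use, and your write-up makes explicit the Jordan-decomposition step (reducing a general singular functional to positive singular pieces) that the paper leaves implicit.
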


\begin{proof} 
Suppose that  $\varphi \in A_n^* \cap A_s^*$, in the notation above Lemma \ref{fm}.   If $\psi$ is  singular on $M$ then it is singular on 
$M_i$.    We see that  $\varphi_{|A_i}$ is in $(A_i)^*_s$ and $(A_i)^*_n$, so is zero by hypothesis. 
Since $\varphi$ is the restriction of a normal functional  and $\cup_i \, A_i$ is relatively weak* dense in $A$ we have that 
$\varphi = 0$.   
 \end{proof} 
 
 We may also appeal to several other known consequences of a Lebesgue decomposition.  For example: 

 \begin{corollary}  \label{pf} The predual $A_*$ of a weak* closed Riesz  approximable  algebra  $A$  
is an $L$-summand in $A^*$. 
  Also, $A_*$ has property {\rm  (V$^*$)} and is weakly sequentially complete.  Hence if $A_*$ is separable then it is a unique predual of $A$. 
 \end{corollary}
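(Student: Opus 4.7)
The plan is to reduce everything to the statement that $A_*$ is an L-embedded Banach space, after which all three conclusions follow from standard Banach-space theory. First, by Theorem~\ref{Lebde} our algebra $A$ is an F \& M Riesz space, so Lemma~\ref{fm} gives the isometric decomposition $A^* = A_n^* \oplus_1 A_s^*$ with $\|\varphi\| = \|\varphi_n\| + \|\varphi_s\|$.

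Next I would identify $A_n^*$ isometrically with the predual $A_*$. The restriction map $M_* \to A^*$ has image exactly $A_n^*$ (by definition of $A_n^*$) and kernel $N = \{\psi \in M_* : \psi|_A = 0\}$, so it descends to a linear bijection $M_*/N \to A_n^*$. Since $A$ is weak* closed in $M$, the quotient $M_*/N$ equipped with its quotient norm is a concrete realization of the predual $A_*$. That the induced bijection $A_* \to A_n^*$ is \emph{isometric} amounts to showing that each relatively weak* continuous functional on $A$ has a norm-preserving normal extension to $M$; this is immediate from Hahn-Banach combined with GW1 (Corollary~\ref{par}), since by GW1 every Hahn-Banach (hence norm-preserving) extension of a relatively weak* continuous functional on $A$ is automatically normal. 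Splicing this isometry into the Lebesgue decomposition yields $A^* = A_* \oplus_1 A_s^*$ as an $\ell^1$-direct sum, which is by definition the statement that $A_*$ is an L-summand in its bidual $(A_*)^{**} = A^*$, i.e., $A_*$ is L-embedded.

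The three remaining assertions are then classical consequences of L-embeddedness, recorded for instance in Harmand-Werner-Werner, \emph{M-ideals in Banach spaces and Banach algebras}: property (V$^*$) is Pfitzner's theorem; weak sequential completeness of L-embedded spaces is a standard fact; and a separable L-embedded Banach space is a strongly unique predual of its dual by Godefroy's theorem. The only substantive step is the isometric splitting $A^* = A_* \oplus_1 A_s^*$, and the main---yet modest---obstacle in that step is the norm-preservation of the normal extension, which is handled cleanly by GW1. No additional structural input from the Riesz approximable setup is needed beyond what Section~\ref{lebd} has already supplied.
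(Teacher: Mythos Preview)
Your argument is correct and follows essentially the same route as the paper's: both derive the $\ell^1$-splitting $A^* = A_* \oplus_1 A_s^*$ from the Lebesgue decomposition in Lemma~\ref{fm} (your explicit use of GW1 to verify that the identification $A_* \cong A_n^*$ is isometric is exactly the norm equality $\|(\varphi_n)_{|A}\| = \|\varphi_n\|$ already implicit in the proof of Lemma~\ref{fm}), and then both invoke standard Banach-space results for the remaining assertions. One correction: the result that a separable $L$-embedded Banach space is a unique predual is due to Pfitzner (reference \cite{Pf} in the paper), not Godefroy; Godefroy's contributions concern related criteria such as property~(X), but the separable $L$-embedded case specifically is Pfitzner's theorem.
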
 

\begin{proof}  The first assertion is obvious from the theorem, and the second and third follow from results of Pfitzner  and Pelczynski as in   \cite[Corollary 2]{Ueda}. 
The last assertion is then the main result in \cite{Pf}, 
also due to  Pfitzner. \end{proof} 

{\bf Remark.} The condition that $A_*$ be separable to be a unique predual is possibly not necessary. 
The 
proof  in \cite{Ueda} (and reprised in \cite{blueda} in the $\sigma$-finite setting) for uniqueness of predual uses  Property (X) of Godefroy and Talagrand, which is a sequential criterion.

\medskip

We conclude by mentioning some other Hardy space properties of our algebras.
 Clearly semi-$\sigma$-finite subdiagonal algebras, and more generally Riesz  approximable algebras, will have the 
UNSEP (the unique normal state extension property).    We know that they will not in general be 
Ueda algebras in the sense of \cite[Section 5]{blueda} (assuming that measurable cardinals exist), since it is shown in 
\cite[Theorem 6.1]{blueda} that then semifinite commutative von Neumann algebras need not be 
Ueda algebras.   However  semi-$\sigma$-finite subdiagonal algebras are of course an `increasing  limit' of Ueda algebras.    Similarly, we do not know if they have `factorization'.  Indeed  in the 
$\sigma$-finite case authors have only able to 
obtain  a one-sided partial factorization \cite{JiSaito,Ji}
(see also \cite{Bek2}).   Similarly it is not likely that in general $C^*_e(A) = M$ as we have in the tracially finite case; indeed  it seems 
that probably $A$ does not even generate $M$ as a $C^*$-algebra in the general case.  If the latter fails, 
then we also will not have the uniqueness of extension of completely contractive homomorphisms property in \cite[Theorem 8.3]{BLsurvey}, a property that does hold
in the tracially finite case covered in that reference.  

It is to be expected that some of the properties  proved in our paper 
will actually turn out to be equivalent, and characterize semi-$\sigma$-finite subdiagonal algebras within
the  analytically conditioned algebras,  similarly to the characterizations of 
 subdiagonal algebras in \cite{BLChar,BLsurvey} and some of the other papers cited above.

\end{document}